\newtheorem{remark}{Remark}
\newtheorem{theorem}{Theorem}
\newtheorem{lemma}[theorem]{Lemma}
\newtheorem{example}[theorem]{Example}
\newtheorem{definition}[theorem]{Definition}
\newtheorem{proposition}[theorem]{Proposition}
\newtheorem{observation}[theorem]{Observation}
\newtheorem{claim}[theorem]{Claim}
\newcommand{\N}{\mathbb N}
\newcommand{\Z}{\mathbb Z}
\newcommand{\balpha}{\boldsymbol{\alpha}}
\newcommand{\bbeta}{\boldsymbol{\beta}}
\def\a{\mathbf a}
\def\c{\mathbf c}
\newcommand{\beeq}{\begin{eqnarray*}}
	\newcommand{\eneq}{\end{eqnarray*}}
\newcommand{\be}{\begin{equation}}
\newcommand{\ee}{\end{equation}}
\theoremstyle{remark}
\newtheorem{rmk}[theorem]{Remark}
\newtheorem{obsalt}[theorem]{Observation}
\definecolor{darkred}{cmyk}{.3,.9,.80,.2}
\title{On the extremal families  for the  Kruskal--Katona theorem}
\author{Oriol Serra\thanks{Department of Mathematics, Universitat Polit\`ecnica de Catalunya, Barcelona.\texttt{oriol.serra@upc.edu}. Supported by the Spanish Research Agency under project MTM2017-82166-P}
\and Llu\'{i}s Vena
\thanks{Department of Mathematics, Universitat Polit\`ecnica de Catalunya, Barcelona. \texttt{lluis.vena@gmail.com}. Supported by the a Beatriu de Pin\'os Fellowship BP2018-0030 of the AGAUR, Horizon's 2020 program cofund.}
}
\date{\today}
\begin{document}

\maketitle
\begin{abstract}
	In  \cite[Serra, Vena, Extremal families for the Kruskal-Katona theorem]{sv21}, the authors have shown a characterization of the extremal families for the Kruskal-Katona Theorem. We further develop some of the arguments given in \cite{sv21} and give additional properties of these extremal families.
F\"uredi-Griggs/M\"ors theorem from 1986/85 \cite{furgri86,mors85} claims that, for some cardinalities, the initial segment of the colexicographical is the unique extremal family; we extend their result as follows: the number of (non-isomorphic) extremal families strictly grows with the gap between the last two coefficients of the $k$-binomial decomposition.

We also show that every family is an induced subfamily of an extremal family, and that, somewhat going in the opposite direction, every extremal family is close to being the inital segment of the colex order; namely, if the family is extremal, then after performing $t$ lower shadows, with $t=O(\log(\log n))$, we obtain the initial segment of the colexicographical order.
We also give a ``fast'' algorithm to determine whether, for a given $t$ and $m$, there exists an extremal family of size $m$ for which its $t$-th lower shadow is not yet the initial segment in the colexicographical order.

As a byproduct of these arguments, we give yet another characterization of the families of $k$-sets satisfying equality in the  Kruskal--Katona theorem. Such characterization is, at first glance, less appealing than the one in \cite{sv21}, since the additional information that it provides is indirect. However, the arguments used to prove such characterization provide additional insight on the structure of the extremal families themselves.
\end{abstract}

\tableofcontents


\section{Introduction}

The well--known Kruskal--Katona  Theorem \cite{kat09,krus63} on the minimum shadow of a family of $k$--subsets of   $[n]=\{1,2,\ldots ,n\}$ is a central result in Extremal Combinatorics with multiple applications, see e.g. \cite{Frankl1991}.
The \emph{shadow}  of a  family $S\subset \binom{[n]}{k}$ 
is the family $\Delta (S)\subset \binom{[n]}{k-1}$  of 
$(k-1)$--subsets  which are contained in some set in $S$. 
The Shadow Minimization Problem asks for the minimum cardinality of $\Delta (S)$ for families of $k$-sets $S$ with a given cardinality $m=|S|$. 
The answer given by the Kruskal--Katona theorem  can be stated in terms of  $k$--binomial decompositions. The 
\emph{$k$--binomial decomposition} 
of a positive integer $m$ is 
\begin{equation}
\label{eq:k-bin-seq}
m=\binom{a_0}{k}+\binom{a_{1}}{k-1}+\cdots +\binom{a_{t}}{k-t},
\end{equation}
where the coefficients satisfy  $a_0>a_1>\cdots >a_{t}\ge k-t\ge 1$, $t\in [0,k-1]$, are   uniquely determined by $m$ and $k$. $t+1$ is said to be the length of the sequence k-binomial decomposition $(a_0,a_1,\cdots,a_{t})$.

\begin{theorem}[Kruskal--Katona \cite{zbMATH03489128,krus63}, and Theorem~2.1 in \cite{furgri86}]\label{thm:kk}
	Let $S\subseteq \binom{[n]}{k}$ be a non-empty family of $k$--subsets of $[n]$  and let  	
	$$
	m=\binom{a_0}{k}+\binom{a_{1}}{k-1}+\cdots +\binom{a_{t}}{k-t}
	$$
	be the $k$--binomial decomposition of $m=|S|$.
	Then 
	\begin{equation}
	\label{eq:kk_ineq}
	|\Delta S|\ge \binom{a_0}{k-1}+\binom{a_{1}}{k-2}+\cdots +\binom{a_{t}}{k-t-1}.
	\end{equation}
	and, more generally
	\begin{equation} \label{eq:fur_grigs_21}
	|\Delta^i S|\ge \binom{a_0}{k-i}+\binom{a_{1}}{k-1-i}+\cdots +\binom{a_{t}}{k-t-i}.
	\end{equation}
	where $\Delta^i(S)$ is the $i$--th iterated shadow of $S$  defined recursively by $\Delta^i(S)=\Delta (\Delta^{i-1}(S))$, $1\le i\le k-1$, $\Delta^{0}(S)=S$.
	Moreover, if \eqref{eq:kk_ineq} is satisfied with equality, then \eqref{eq:fur_grigs_21} is also satisfied with equality, for all $i\geq 0$.
\end{theorem}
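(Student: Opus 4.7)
The plan is to prove Kruskal--Katona via the classical compression (shifting) method, organized around the fact that the initial segment of the colexicographical order is the conjectured extremizer. The first step is to compute the shadow of the colex initial segment directly: by induction on $t$, I would show that the segment $\mathcal{I}_k(m)$ of size $m=\binom{a_0}{k}+\binom{a_1}{k-1}+\cdots+\binom{a_t}{k-t}$ has shadow of size exactly $\binom{a_0}{k-1}+\binom{a_1}{k-2}+\cdots+\binom{a_t}{k-t-1}$. The key structural observation is that $\mathcal{I}_k(m)=\binom{[a_0]}{k}\cup\{\{a_0+1\}\cup T:T\in\mathcal{I}_{k-1}(m-\binom{a_0}{k})\}$, and that the shadow contributions of the two blocks are disjoint, which immediately delivers the inductive step.

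The second step is the compression argument. For $i<j$ define the shift $C_{ij}(S)$ by replacing each $T\in S$ with $j\in T$, $i\notin T$ by $T\setminus\{j\}\cup\{i\}$ when that set is not already in $S$. The main technical lemma is $|\Delta C_{ij}(S)|\le|\Delta S|$; iterating over all pairs produces a \emph{shifted} family $S^*$ of the same cardinality, with shadow no larger than that of $S$. A separate inductive argument on $n$, splitting any shifted family according to whether it contains the maximal element, shows that among shifted families of a given size the shadow is minimized by the colex initial segment. Combined with the explicit computation in Step~1, this proves \eqref{eq:kk_ineq}.

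Finally, for the iterated bound \eqref{eq:fur_grigs_21}, the sequence $\binom{a_0}{k-1}+\binom{a_1}{k-2}+\cdots+\binom{a_t}{k-t-1}$ is itself a valid $(k-1)$-binomial decomposition (after dropping any trailing zero terms), so applying \eqref{eq:kk_ineq} to $\Delta S$ and iterating by induction on $i$ delivers the general bound. The ``moreover'' clause follows because equality in \eqref{eq:kk_ineq} makes $|\Delta S|$ exactly the value fed into the induction at the next step, so chaining the inequalities at each iterate collapses them to equalities throughout.

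The principal obstacle is the compression lemma $|\Delta C_{ij}(S)|\le|\Delta S|$, which requires a careful injection from $(k-1)$-sets freshly introduced into the shadow by the shift back into the $(k-1)$-sets destroyed by it: swapping $i\leftrightarrow j$ in each candidate and verifying via casework (whether $i\in R$, $j\in R$, or neither) that the preimage lies in $\Delta S\setminus\Delta C_{ij}(S)$. A secondary subtlety lies in passing from ``shifted'' to ``initial segment of colex'': shiftedness alone is strictly weaker than being an initial segment, so the comparison must either use a further explicit shadow estimate or an additional compression that linearly orders the family.
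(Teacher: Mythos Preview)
Your compression approach is the standard route to Kruskal--Katona and is quite different from how the paper handles Theorem~\ref{thm:kk}: the paper does not prove this theorem but cites it from \cite{zbMATH03489128,krus63,furgri86}, noting only that its hypotenusal-process machinery (Theorem~\ref{thm:hypotenusal}, Proposition~\ref{prop.translation}) furnishes an alternative viewpoint. So there is no direct ``paper's proof'' to compare against; your outline for \eqref{eq:kk_ineq} and \eqref{eq:fur_grigs_21} is the classical one and is fine in substance. One small omission: when iterating \eqref{eq:kk_ineq} to get \eqref{eq:fur_grigs_21} you apply the bound to $\Delta S$, whose $(k-1)$-binomial decomposition is that of $|\Delta S|$, not of $m_1$; since $|\Delta S|$ may exceed $m_1$, you need the (easy) monotonicity of the Kruskal--Katona bound as a function of the cardinality.

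There is, however, a genuine gap in your handling of the ``moreover'' clause. Equality in \eqref{eq:kk_ineq} gives $|\Delta S|=m_1$, and then \eqref{eq:kk_ineq} applied to $\Delta S$ yields only $|\Delta^2 S|\ge m_2$. ``Chaining the inequalities'' never produces the matching upper bound $|\Delta^2 S|\le m_2$: each application of Kruskal--Katona is a one-sided estimate, so equality at step~$1$ does not automatically force equality at step~$2$. The statement that extremality of $S$ implies extremality of $\Delta S$ is exactly the content of \cite[Theorem~2.1]{furgri86}, and its proof requires an additional structural argument (for example, showing that for a shifted extremal family the shadow is a colex initial segment, or an upper-shadow argument). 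Your proposal does not supply this step.
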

The  ``moreover'' part is \cite[Theorem~2.1]{furgri86}, the previous \eqref{eq:kk_ineq} is Kruskal--Katona's theorem properly \cite{zbMATH03489128,krus63}. What here is denoted by $\Delta^i$, in  \cite{furgri86} is denoted by $\Delta^{k-i}$. Note that Kruskal--Katona's theorem \cite{zbMATH03489128,krus63} is used in the proof of \cite[Theorem~2.1]{furgri86}.

In this context, we say that a family $S$ is \emph{extremal} 
if the cardinality of its lower shadow achieves the lower bound in \eqref{eq:kk_ineq} from Theorem \ref{thm:kk}. 
For every $m$, the initial segment of length $m$ in the \emph{colex order} is an extremal family. We recall that the colex order 
on the $k$--subsets of $[n]$ is defined by $x\le_{\text{colex}} y$ if and only if $\max((x\setminus y)\cup (y\setminus x))\in y$. We denote by  $I_{n,k}(m)$   the initial segment  of length $m$ in the colex order in $\binom{[n]}{k}$. In what follows, we refer to initial segments up to automorphisms of the Boolean lattice, induced by any permutation of $[n]=\{1,2,3,\ldots,n\}$. We recall that the shadow of an initial segment in the colex order in $\binom{[n]}{k}$ is again an initial segment in the colex order in $\binom{[n]}{k-1}$. This shows that \eqref{eq:kk_ineq} is tight.

F\"uredi and Griggs \cite{furgri86} (see also M\"ors \cite{mors85}) proved that, for cardinalities $m$ for which the $k$--binomial decomposition has length $t+1<k$ or $a_{k-1}=a_{0}-k+1$, these initial segments $I_{n,k}(m)$ are in fact the unique extremal families (up to automorphisms of the Boolean lattice);  the authors also give some examples which show that this is not the case when $t+1=k$ and $a_{k-1}<a_{0}-k+1$, thus characterizing  those cardinalities for which the initial segment in the colex order is the unique example of an extremal family (see Theorem~\ref{thm:card} below).

Let us introduce some notation that will be extensively used throughout the paper. $\a=(a_0,\ldots,a_t)$ denotes a sequence of integers, $\ell(\a)=t+1$ its length, while
\[
\binom{\a}{k}=\binom{a_0}{k}+\binom{a_1}{k-1}+\cdots+\binom{a_t}{k-t}
\]
For  positive integer $m$, we write $\binom{\a}{k}\stackrel{b}{=}m$ as the k-binomial decomposition of $m$; then $\a$ is the unique strictly decreasing sequence of $\leq k$ positive integers satisfying $\binom{\a}{k}=m$.

\begin{theorem}[\cite{furgri86},\cite{mors85}] \label{thm:card} Let $n\ge k> 0$ and $0<m\le \binom{n}{k}$. Let $\binom{\a}{k}\stackrel{b}{=} m$ be the $k$--binomial decomposition of $m$. 
	The initial segment of length $m$ in the colex order in $\binom{[n]}{k}$  is the unique  extremal set (up to isomorphism) 
	\begin{center}if and only if  \end{center}
	\[\ell (\a)<k \;\; \text{ or }\;\; m=\binom{q}{k}-1=\binom{q-1}{ k}+\cdots+\binom{q-k}{1}, \text{ for some positive integer $q$, with} k< q\leq n\]
\end{theorem}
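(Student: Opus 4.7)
The plan is to prove the two directions of the equivalence separately, using induction on $k$ combined with the compression technique underlying Theorem~\ref{thm:kk}. For the uniqueness direction (``if''), given an extremal family $S$ I would apply shifting operations, which preserve both $|S|$ and $|\Delta S|$, until $S$ is stable. A standard analysis of a stable extremal family shows that $S$ decomposes as $S = \binom{[a_0]}{k} \cup S'$, where every set in $S'$ contains the element $a_0+1$ and the link family $T = \{A \setminus \{a_0+1\} : A \in S'\} \subseteq \binom{[a_0]}{k-1}$ is itself extremal of cardinality $\binom{a_1}{k-1} + \cdots + \binom{a_t}{k-t}$. When $\ell(\a) < k$ we have $\ell((a_1,\ldots,a_t)) = t < k-1$, so the induction hypothesis applied in $\binom{[a_0]}{k-1}$ forces $T$ to coincide with its initial colex segment, whence $S = I_{n,k}(m)$ up to isomorphism.

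The second uniqueness subcase, $m = \binom{q}{k}-1$, has binomial decomposition $(q-1, q-2, \ldots, q-k)$; the ``moreover'' clause of Theorem~\ref{thm:kk} with $i=k-1$ then gives $|\Delta^{k-1} S| = \binom{q-1}{1} + \binom{q-2}{0} = q$, so the $(k-1)$-fold shadow consists of exactly $q$ singletons, forcing the ground support of $S$ to be a $q$-element set which we rename $[q]$. Since $|S| = \binom{q}{k}-1$ and $S \subseteq \binom{[q]}{k}$, we obtain $S = \binom{[q]}{k} \setminus \{A\}$ for a single set $A$, and any two such families are isomorphic via a permutation of $[q]$. For the non-uniqueness direction (``only if''), when $\ell(\a) = k$ and $a_{k-1} < a_0 - k + 1$, I would exhibit an alternative extremal family by a local modification of $I_{n,k}(m)$: delete a $k$-subset $A \subseteq [a_0]$ from the top block $\binom{[a_0]}{k}$ chosen so that the lower shadow of the remaining block is still all of $\binom{[a_0]}{k-1}$ (possible since $a_0$ is large enough that every $(k-1)$-subset of $[a_0]$ lies in several $k$-subsets), and add a compensating $k$-set extending the ``apex'' region. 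The strict inequality $a_{k-1} < a_0 - k + 1$ is exactly what provides room for this compensating set without making the shadow grow. Non-isomorphism with $I_{n,k}(m)$ is then certified by comparing the degree sequences of the ground-set elements, since the ``core'' element losing incidences and the ``apex'' element gaining incidences alter the multiset of vertex degrees in a detectable way.

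The main obstacle is the rigidity step in the first paragraph, where I claimed that a stable extremal family contains $\binom{[a_0]}{k}$ as its top block and has an extremal link $T$. This is the one place where the hypothesis $\ell(\a) < k$ or $m = \binom{q}{k}-1$ enters: when these hypotheses fail, the alternative constructions sketched above show that this rigidity genuinely fails, and the induction cannot close. A careful analysis of how shifting interacts with the equality case of \eqref{eq:kk_ineq} is required here, ensuring that equality propagates through each shift and that the final stable family admits the claimed block decomposition.
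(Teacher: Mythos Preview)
Your approach is quite different from the paper's, which never uses shifting. The paper proves the uniqueness direction by contrapositive through its hypergraph/hypotenusal machinery: if $S$ is extremal and not isomorphic to colex, then in a supercomfortable ordering of the edges of $H(S)$ the first ``bad'' edge $e_{t+1}$ falls into one of the cases of Claim~\ref{cl:edges_in_non_colex}; Observation~\ref{obs:coeff} then produces a ball in the hypotenusal process that is never annihilated, which forces the full $k$-binomial decomposition to have length $k$ and, by a separate count, forces some consecutive gap $a_i-a_{i+1}\ge 2$, ruling out $m=\binom{q}{k}-1$. For the non-uniqueness direction the paper simply cites the F\"uredi--Griggs examples (or its own Constructions~A and~B from Section~\ref{sec:cons_a_b}).

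Your $m=\binom{q}{k}-1$ argument is correct and is essentially the classical one. But your $\ell(\a)<k$ argument has a gap that is more serious than you acknowledge. Compressions preserve $|S|$ and, for an extremal $S$, also $|\Delta S|$, so the compressed family is again extremal; however, compression does \emph{not} preserve isomorphism type. Even if you show that the fully compressed family equals $I_{n,k}(m)$, nothing in your outline transfers this back to the original $S$. Closing this would require proving that every shift that actually changes an extremal $S$ (under the hypothesis $\ell(\a)<k$) is realized by a permutation of the ground set, and that is not a ``standard analysis''---the published proofs of M\"ors and F\"uredi--Griggs avoid shifting entirely and argue structurally, locating a vertex of maximum degree whose link is again extremal with the truncated decomposition $(a_1,\ldots,a_t)$, and recursing on the link. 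Your own support-counting trick for the $\binom{q}{k}-1$ case is exactly the right template for such a shift-free induction. Finally, your non-uniqueness construction (delete a set from $\binom{[a_0]}{k}$, add a compensating set in the apex) requires exhibiting a $k$-set $B\notin I_{n,k}(m)$ whose entire $(k-1)$-shadow already lies in $\Delta(I_{n,k}(m))$; you assert $a_{k-1}<a_0-k+1$ provides this but do not identify $B$ or verify the shadow containment, and the standard construction (remove arbitrary sets from a larger colex segment with the same first $k-1$ coefficients) is both simpler and immediately seen to work.
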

We note that, for fixed $k$, the set of integers with $k$--binomial decomposition of length $k$ has upper asymptotic density one. Thus, the unicity of the extremal families can be ensured only on a thin set of cardinalities.

 Theorem~\ref{thm:card} prompted F\"uredi and Griggs to ask about the characterization of  the  extremal families. We answered this question in \cite[Theorem~3]{sv21}.
  The aim of this paper is to further study the relation between the binomial decompositions, the extremal families and their relation with the colex order.
 
 \subsection{Summary of the results}
 
  We begin by giving an additional numerical characterization of the extremal families, which is inspired by how in \eqref{eq:kk_ineq} the binomial decomposition of $|S|$ is transformed to give a bound on $|\Delta(S)|$.
   Our approach is based on the following variation on the $k$--binomial decomposition sequence of a positive integer that is uniquely determined by the family $S$ of $k$--subsets and not only on its size. 
  The \emph{shadow $k$-binomial decomposition} of $S\subseteq \binom{[n]}{k}$ is the sequence of (non necessarily positive) integers 
   $\beta_0>\beta_1>\ldots >\beta_{k-2}\ge \beta_{k-1}$ 
   such that, for each $0\le i \le k-1$,
   $$
   |\Delta^{i}(S)|=\binom{\beta_0}{k-i}+\binom{\beta_1}{k-i-1}+\cdots +\binom{\beta_{k-1}}{-i+1}.
   $$
   (We recall that, for all integers $n\in \Z$ and each positive integer $k$,  $\binom{n}{k}=\frac{n (n-1)\cdots (n-k+1)}{k!}$, while  $\binom{i}{0}=1$ for all $i\in \Z$ and $\binom{i}{k}=0$ for $k<0$.)
   The shadow $k$-binomial decomposition of a family $S$ of $k$--subsets can be alternatively defined recursively by
   \begin{equation}\label{eq.finding_b} 
   \beta_i=
   \begin{cases}
   |\Delta^{k-1}(S)| -1, &\text{$i=0$, if $k\geq 2$}\\
   |\Delta^{k-i-1}(S)|-\left(\binom{\beta_0}{i+1}+\cdots +\binom{\beta_{i-1}}{2}\right)-1, & \text{$1\le i< k-1$,}\\ 
   |S|-\left(\binom{\beta_0}{k}+\cdots +\binom{\beta_{k-2}}{2}\right) & \text{$i=k-1$}
   \end{cases}
   \end{equation}

\begin{theorem}[Characterization of extremal families]\label{thm:charac}
	$S\subset \binom{[n]}{k}$ is an extremal family (extremal for the Kruskal--Katona theorem, Theorem~\ref{thm:kk}) if and only if $\beta_{k-1}\ge 1$, where $\beta_{k-1}$ is the last element of the shadow $k$-binomial decomposition of $S$.
  \end{theorem}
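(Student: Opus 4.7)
My plan is to leverage the identities $|S| = \sum_{j=0}^{k-1}\binom{\beta_j}{k-j}$ and $|\Delta(S)| = \sum_{j=0}^{k-1}\binom{\beta_j}{k-1-j}$ (obtained by unpacking \eqref{eq.finding_b} with the conventions $\binom{x}{0}=1$ and $\binom{x}{r}=0$ for $r<0$) together with the structural inequalities $\beta_0>\beta_1>\cdots>\beta_{k-2}\ge\beta_{k-1}$. The backward direction will be a compression argument via Pascal's identity; the forward direction will be an explicit telescoping calculation driven by the ``moreover'' clause of Theorem~\ref{thm:kk}.

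For ($\Leftarrow$), assume $\beta_{k-1}\ge 1$. The only possible coincidence in the $\beta$-sequence is the tail equality $\beta_{k-2}=\beta_{k-1}$. If it occurs, the Pascal identity $\binom{b}{r-1}+\binom{b}{r}=\binom{b+1}{r}$ merges $\binom{\beta_{k-2}}{2}+\binom{\beta_{k-1}}{1}$ into $\binom{\beta_{k-1}+1}{2}$ in the $|S|$-expression, while the same identity, one lower index down, merges $\binom{\beta_{k-2}}{1}+\binom{\beta_{k-1}}{0}$ into $\binom{\beta_{k-1}+1}{1}$ in the $|\Delta(S)|$-expression. Iterating these \emph{simultaneous} merges until the sequence becomes strictly decreasing produces the canonical $k$-binomial decomposition $(a_0,\ldots,a_t)$ of $|S|$ (the tail condition $a_t\ge k-t$ is exactly $\beta_{k-1}\ge 1$) and simultaneously rewrites $|\Delta(S)|$ as $\sum_{j=0}^t\binom{a_j}{k-1-j}$---the Kruskal--Katona bound. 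Hence $S$ is extremal.

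For ($\Rightarrow$), assume $S$ is extremal with $|S|=\binom{\a}{k}\stackrel{b}{=}m$ of length $t+1$. The ``moreover'' clause of Theorem~\ref{thm:kk} gives $|\Delta^i(S)|=\sum_{\ell=0}^t\binom{a_\ell}{k-\ell-i}$ for all $i\in[0,k-1]$. I substitute into \eqref{eq.finding_b} and induct on the index $j$: for $j<t$ the head of the $a$-sum cancels against the inductively computed $\binom{\beta_\ell}{\cdot}$ terms, yielding $\beta_j=a_j$. Once $j\ge t$, only the single term $\binom{a_t}{\cdot}$ survives in the $a$-sum, and the residue collapses via the telescoping Pascal identity $\sum_{m=0}^{\ell-2}\binom{a-1-m}{\ell-m}=\binom{a}{\ell}-(a-\ell+1)$, giving $\beta_j=a_t-1-(j-t)$ for $t\le j\le k-2$ and, without the ``$-1$'' correction of \eqref{eq.finding_b}, $\beta_{k-1}=a_t+t-k+1$. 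The Kruskal--Katona constraint $a_t\ge k-t$ translates exactly into $\beta_{k-1}\ge 1$; the edge case $t=k-1$ is absorbed by the same formulas, yielding $\beta_{k-1}=a_{k-1}\ge 1$.

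The main obstacle I expect is verifying step by step in ($\Leftarrow$) that each Pascal merge applied to the $|S|$-expression is genuinely mirrored, with matching lower indices, by a merge in the $|\Delta(S)|$-expression, so that after termination both sums refer to the \emph{same} canonical $(a_0,\ldots,a_t)$. Secondary care points are the boundary cases of the induction in ($\Rightarrow$)---the initial step $j=0$ branches on $t=0$ versus $t\ge 1$ (since $|\Delta^{k-1}(S)|=a_0+\mathbf{1}_{t\ge 1}$)---and confirming that the two formulas for $\beta_j$ (for $j<t$ and $j\ge t$) join seamlessly at $j=t$.
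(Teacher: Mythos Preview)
Your argument is correct. The backward direction is essentially the paper's: both of you identify the shadow $k$-binomial decomposition with the full $k$-binomial decomposition (you via explicit Pascal merges, the paper via uniqueness), then read off equality in \eqref{eq:kk_ineq}.

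The forward direction, however, is genuinely different. The paper argues by contrapositive: assuming some $\beta_i\le 0$, it locates the first shadow level $s$ at which the tail coefficient becomes non-positive, compares $(\beta_0,\ldots,\beta_{k-s-2})$ lexicographically with the full $(k-s)$-binomial decomposition of $|\Delta^s(S)|$, and deduces $|\Delta^{s+1}(S)|>\binom{\c}{k-s-1}$, so $\Delta^s(S)$ is not extremal, whence neither is $S$ by the ``moreover'' clause. You instead argue directly: feeding the exact values $|\Delta^i(S)|=\sum_\ell\binom{a_\ell}{k-\ell-i}$ into the recursion \eqref{eq.finding_b} and telescoping via Pascal's identity, you obtain closed formulas $\beta_j=a_j$ for $j<t$, $\beta_j=a_t-1-(j-t)$ for $t\le j\le k-2$, and $\beta_{k-1}=a_t+t-k+1$. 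This is more elementary and has the bonus of explicitly exhibiting the shadow decomposition as the full $k$-binomial decomposition (the content of the paper's Remark~\ref{rmk:k-binom_determines_shadow}). The paper's contrapositive, on the other hand, does not rely on knowing the shadow sizes exactly---only on the monotonicity Theorem~\ref{thm:main}---so it is structurally closer to the hypotenusal-process machinery developed later. Both routes invoke the ``moreover'' clause of Theorem~\ref{thm:kk} at the same depth.
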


The computation of the shadow binomial decomposition of a family $S$ can be obtained through the \emph{hypotenusal process} described in Section \ref{sec:hypotenusal} using an initial configuration derived from the Stanley-Reisner complex.  For each family $S$, the hypotenusal process provides  an infinite sequence $(w_1,w_2,\ldots)$ of nonnegative integers from which one can derive the shadow binomial decomposition of $S$ (see Proposition~\ref{prop.translation}~\ref{en:prop_trans_2} or Theorem~\ref{thm:hypotenusal} below).  The name \emph{hypotenusal process} arises from the sequence of so--called hypotenusal numbers introduced by Lucas \cite{lucas1891theorie} which can be obtained from the hypotenusal process using a particular initial condition.
The following theorem gives an alternative proof to Theorem~\ref{thm:kk}.

\begin{theorem}\label{thm:hypotenusal} Let $S\subset \binom{[n]}{k}$ be a family $k$--subsets of $[n]$. Let $(w_1,w_2,\ldots )$ be the sequence generated by the hypotenusal process of $S$. Then the shadow $k$--binomial decomposition of $S$ is
	$$
	(n-w_1-1,n-w_2-2,\ldots ,n-w_{k-1}-(k-1), n-w_k-(k-1)).
	$$ 
\end{theorem}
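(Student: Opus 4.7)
The plan is to proceed by induction on $i \in \{0, 1, \ldots, k-1\}$, showing that the $(i+1)$-st output $w_{i+1}$ of the hypotenusal process yields exactly the $i$-th entry $\beta_i$ of the shadow $k$-binomial decomposition via the stated formula. The main idea is to align the recursive structure of the hypotenusal process (as defined in Section~\ref{sec:hypotenusal}) with the recursive definition of $\beta_i$ in (\ref{eq.finding_b}).

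For the base case $i = 0$, I would use the fact that the initial configuration of the hypotenusal process is built from the Stanley--Reisner complex of $S$. By construction, the first iteration of the process measures the deficiency of the vertex set $\Delta^{k-1}(S)$ inside $[n]$, giving $|\Delta^{k-1}(S)| = n - w_1$. Combined with the formula $\beta_0 = |\Delta^{k-1}(S)| - 1$ from (\ref{eq.finding_b}), this yields $\beta_0 = n - w_1 - 1$, matching the claim.

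For the inductive step, assume $\beta_j = n - w_{j+1} - (j+1)$ for $0 \le j < i$. I would trace how the $i$-th iteration of the process updates its state vector so as to implement the subtraction
\[
|\Delta^{k-i-1}(S)| - \binom{\beta_0}{i+1} - \binom{\beta_1}{i} - \cdots - \binom{\beta_{i-1}}{2}
\]
that appears in (\ref{eq.finding_b}). The recurrence of the hypotenusal process should be tailored so that, after this cancellation, the residual is read off as $\beta_i + 1$ with the correct offset $(i+1)$, giving $\beta_i = n - w_{i+1} - (i+1)$. This is essentially a bookkeeping argument: each previously computed $\beta_j$ contributes a binomial term that the process systematically peels off from the running count of $|\Delta^{k-i-1}(S)|$.

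The hardest step will be the final iteration $i = k-1$, where the offset in the formula is $(k-1)$ rather than $k$. This reflects the fact that $\beta_{k-1}$ sits at the same ``level'' as $\beta_{k-2}$: only the non-strict inequality $\beta_{k-2} \ge \beta_{k-1}$ is required, in contrast with the strict inequalities between earlier $\beta_j$. I expect to need a separate boundary-case argument showing that the hypotenusal process does not descend to a new level at step $k$ but instead reports the residual at the same level as step $k-1$; once this is established, the third branch of (\ref{eq.finding_b}) gives $\beta_{k-1} = n - w_k - (k-1)$. As a byproduct, since the hypotenusal process outputs nonnegative integers, the resulting ordering and sign conditions on the $\beta_j$, compared against the $k$-binomial decomposition of $|S|$ from Theorem~\ref{thm:kk}, yield the advertised alternative proof of the Kruskal--Katona inequality (\ref{eq:kk_ineq}).
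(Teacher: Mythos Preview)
Your outline correctly identifies the base case: the balls with delay $1$ come precisely from the $1$-edges of the hypergraph (the vertices outside the support), each is processed without descendants, and so $w_1=n-|\Delta^{k-1}(S)|$, giving $\beta_0=n-w_1-1$. But the inductive step has a genuine gap. The hypotenusal process is defined purely in terms of balls (placed via the trees $T_{e_j}$ of Section~\ref{sec:tree_coded}) and a wall; it never sees the number $|\Delta^{k-i-1}(S)|$ directly. To make your induction go through you must prove that the multiset of balls present at the start of iteration $i{+}1$, together with the wall position, encodes exactly the residual
\[
|\Delta^{k-i-1}(S)|-\binom{\beta_0}{i+1}-\cdots-\binom{\beta_{i-1}}{2}.
\]
That is the whole content of the theorem, and the sentence ``the recurrence of the hypotenusal process should be tailored so that\ldots'' does not supply it.

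The paper (Proposition~\ref{prop.translation}) closes this gap by carrying a \emph{translation-invariant} identity through the process: at every moment one has
\[
\binom{n}{k}-\sum_{j\in I}\binom{n-m_j}{k-d_j}\ \doteq\ (\text{terms already extracted})+(\text{leading binomial }\tbinom{n-w-t}{k-t})+(\text{balls in play}),
\]
where the pairs $(m_j,d_j)$ come from Section~\ref{sec:hypergraph} and satisfy \eqref{eq:1} for \emph{every} $i$ simultaneously. The verification that processing a ball $(j,t{+}1)$ and advancing the wall is exactly the binomial recurrence applied to $\binom{n-j-(t+1)}{k-(t+1)}$ and cancelled against the expansion of the leading term is the ``bookkeeping'' you allude to, but it relies essentially on this invariant. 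Without it (or an equivalent formulation), there is no bridge from ``balls processed at step $i{+}1$'' to ``the correct residual of $|\Delta^{k-i-1}(S)|$''. Two further points: the offset $(k-1)$ in the last entry is not a boundary phenomenon of the process but the trace of a $\binom{a}{0}$ term present throughout (see \eqref{eq:2}), so the last step is not harder than the others; and you also need Proposition~\ref{prop.translation}\ref{en:prop_trans_1} (the process does not end abruptly) for $w_1,\ldots,w_k$ to even be defined, which your sketch omits.
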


The properties of the sequence $(w_1,w_2,\ldots )$, and in particular its growth, provide a number of quantitative and structural results for extremal families, such as Theorem~\ref{thm:depth} below. 

From the definition of $\binom{\a}{k-1}$ when $\binom{\a}{k}\stackrel{b}{=}m$, it is clear that  all initial segments $I_{n,k}(m)$ with $m$ having a $k$--binomial decomposition sequence of the form  $(a_0,a_1,\ldots ,a_{k-2},i)$ for $1\le i< a_{k-1}$ have the same lower shadow. Let $m_0$ (resp. $m_1$) be the cardinality given by the $k$-binomial decomposition $(a_0,a_1,\ldots ,a_{k-2},1)$ (resp. $(a_0,a_1,\ldots ,a_{k-2},a_{k-1})$)  then (as the shadow should be smaller or equal, yet Kruskal--Katona's theorem ensures that the first would not be the case) every family of the form $I_{n,k}(m_1)\setminus J$ is extremal for every $J$ of size $\leq m_1-m_0$ . These are the examples given in \cite[Example~2.4]{furgri86}\footnote{In our case we are considering only the initial segment of the colex order as an extremal family, yet in \cite[Example~2.4]{furgri86} the authors consider the general case when the initial family is just extremal, not necessarily the initial segment in the colex order.} of extremal families $S$ different from initial segments $I_{n,k}(m)$; when considering the initial family to be the colex, then $\Delta (S)$ is an initial segment in the colex order and one can say that these examples are given by small perturbations of the initial segment in the colex order.

Since the $(k-1)$--iterated shadow $\Delta^{k-1}(S)$ of a family $S$ are the singletons, which are isomorphic to the initial segment in the colex order, it is reasonable to measure how far is an extremal family from an initial segment in the colex order by the smallest $t$ such that $\Delta^t(S)$ is an initial segment of the colex order. From this perspective, extremal families can not be very far away from initial segments in the colex order. 

\begin{theorem}\label{thm:depth} Let $S\subset \binom{[n]}{ k}$ be an extremal family with cardinality $m$ and $\binom{\a}{k}\stackrel{b}{=} m$ be its $k$-binomial decomposition. Then there exists $c$, a positive constant that depends only on $k$, $c=c(k)$, such that, 
	\begin{center}
		for every $t\ge c\log\log n$, we have
		$\Delta^t(S)\cong I_{n,k-t}\left(\binom{\a}{k-t}\right).$
	\end{center}
\end{theorem}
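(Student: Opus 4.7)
The proof rests on analysing the hypotenusal sequence $(w_1,w_2,\ldots)$ attached to $S$ by the hypotenusal process of Section~\ref{sec:hypotenusal}. By Theorem~\ref{thm:hypotenusal} the shadow $k$-binomial decomposition of $S$ is
\[
(n-w_1-1,\,n-w_2-2,\,\ldots,\,n-w_{k-1}-(k-1),\,n-w_k-(k-1)),
\]
and applying Theorem~\ref{thm:charac} to every iterated shadow $\Delta^j(S)$ (which is itself extremal by the \emph{moreover} part of Theorem~\ref{thm:kk}) yields the uniform upper bound $w_i\le n-i$ for all $1\le i\le k$. This is the only place where the parameter $n$ enters the argument.

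The first ingredient I would establish is that the hypotenusal process is compatible with shadows in a simple way: the hypotenusal sequence attached to $\Delta^t(S)$ is essentially the truncation $(w_1,\ldots,w_{k-t})$ of the one attached to $S$. This should follow by inspecting how $|\Delta^j(S)|$ enters the recurrence \eqref{eq.finding_b} (using that $\beta_i$ for $i\le k-t-2$ depends only on the same inner shadow sizes shared by $S$ and $\Delta^t(S)$, while the last entry of the truncated decomposition picks up a compensating $+1$), and reduces the theorem to a numerical condition on the initial part of $(w_i)$. Writing $(w_1^{\star},\ldots,w_k^{\star})$ for the canonical sequence attached to $I_{n,k}(m)$, which depends only on $\a$, the condition $\Delta^t(S)\cong I_{n,k-t}\bigl(\binom{\a}{k-t}\bigr)$ becomes $(w_1,\ldots,w_{k-t})=(w_1^{\star},\ldots,w_{k-t}^{\star})$. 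Letting $i_0$ be the smallest index at which $(w_i)$ deviates from the canonical pattern, the theorem reduces to proving that $k-i_0\le c(k)\log\log n$.

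The heart of the proof, and the main obstacle, is a quantitative expansion estimate for the hypotenusal recurrence: once $(w_i)$ deviates from $(w_i^{\star})$ at index $i_0$, the subsequent entries must grow doubly exponentially, $w_{i_0+j}\ge 2^{2^{\,j-C_k}}$, for some constant $C_k$ depending only on $k$. I expect this to follow from a careful dissection of the recurrence defining the hypotenusal process, in which each new entry is built from binomial combinations of the previous ones and a per-step quadratic amplification of the form $w_{i+1}\gtrsim \binom{w_i-C_k}{2}$ occurs away from the canonical pattern, so that any discrepancy is squared at each step. Combining this with the extremality bound $w_{i_0+j}\le n$ forces $j\le \log\log n+C'_k$; taking $j=k-i_0$ yields $k-i_0\le \log\log n+C'_k$, which is the claimed bound once $c(k)$ is chosen large enough to absorb the additive constant.
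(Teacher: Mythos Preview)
Your proposal contains a conceptual error that makes the reduction fail. You assert that the condition $\Delta^t(S)\cong I_{n,k-t}\bigl(\binom{\a}{k-t}\bigr)$ is equivalent to $(w_1,\ldots,w_{k-t})=(w_1^\star,\ldots,w_{k-t}^\star)$, where $(w_i^\star)$ is the wall sequence attached to the colex segment of size $m$. But for \emph{any} extremal family $S$ of size $m$, the shadow $k$-binomial decomposition coincides with the full $k$-binomial decomposition of $m$ (Remark~\ref{rmk:k-binom_determines_shadow}), and by Theorem~\ref{thm:hypotenusal} the walls $w_1,\ldots,w_k$ are determined by, and determine, that decomposition. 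Hence one \emph{always} has $(w_1,\ldots,w_k)=(w_1^\star,\ldots,w_k^\star)$: the wall sequence is an invariant of $|S|$ alone among extremal families and cannot detect whether $\Delta^t(S)$ is actually isomorphic to a colex segment. Your index $i_0$ of first deviation therefore does not exist, and the argument halts before it can invoke any growth estimate.

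The paper's route works at the level of the hypergraph $H(S)$, not the wall sequence. If $S$ has depth $j$, then $H^{(k-j+1)}$ is the first truncation failing to be a colex hypergraph; Claim~\ref{cl:edges_in_non_colex} pins down the structure of the offending edge, and Observation~\ref{obs:coeff} shows it contributes at least one ball to the hypotenusal process that is \emph{not} immediately absorbed by the wall. The descendants of that surviving ball then dominate the hypotenusal numbers $a[i]$, so consecutive gaps in the shadow $k$-binomial decomposition from that point on are bounded below by the $a[i]$ (Proposition~\ref{p.shad_hypo}). Combining this with the doubly exponential lower bound $a[i]\ge 2^{2^{i-2}+1}$ from~\eqref{eq.hyp_all} and the telescoping constraint $b_0-b_{k-1}\le n$ (which uses $b_{k-1}\ge 1$ from Theorem~\ref{thm:charac}) forces $j\le \log_2\log_2 n+O(1)$; this is Theorem~\ref{thm:bound_depth}, from which Theorem~\ref{thm:depth} is immediate. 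Your intuition about per-step squaring is exactly right, but it must be applied to the surplus ball produced by the non-colex edge in the hypergraph, not to a nonexistent deviation of $(w_i)$ from $(w_i^\star)$.
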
  
The next result gives a more precise necessary numerical condition Theorem~\ref{thm:depth} than for the existence of an extremal family $S$ such that $\Delta^t(S)$ is not an initial segment in the colex order.
\begin{theorem}[Hypotenusal numbers]\label{thm:hn}
	Let $S\subset \binom{[n]}{k}$ be an extremal  family of $k$-sets.
	Let $t$ be such that $\Delta^t(S)$ is not an initial segment of the colex order and let	
\begin{equation}\label{eq:hypot_ref}
	|S|=\binom{a_0}{k}+\cdots+\binom{a_{k-1}}{1},
\end{equation}
	be the $k$--binomial decomposition of $|S|$.
	Then we have
	\begin{equation}\label{eq:hn}
	a_{k-2-t+i}-a_{k-1-t+i}\ge a[i-1]+1, \;\; 1\le i < t, \qquad \text{ and }\qquad a_{k-2}-a_{k-1}\ge a[t-1],
	\end{equation}
	where $\{a[n]\}_{n\geq -1}$, with $a[-1]=1$, is the sequence of hypotenusal numbers.\footnote{The hypotenusal numbers can be found in Lucas \cite{lucas1891theorie}, and Sylvester and Hammond \cite{sylvester1887x,sylvester1888iv}. It is the sequence A001660 in the Encyclopedia of Integer Sequences \cite{oeis}.} Moreover, there are some instances of $t$ and $(a_0,\ldots,a_{k-1})$ for which the inequalities of \eqref{eq:hn} are satisfied with equality.
\end{theorem}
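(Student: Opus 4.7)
The plan is to combine the hypotenusal machinery of Theorem~\ref{thm:hypotenusal} with the extremality characterization of Theorem~\ref{thm:charac} and the defining recurrence of the hypotenusal numbers $a[n]$. First, for an extremal family, Theorem~\ref{thm:charac} guarantees $\beta_{k-1}\ge 1$, so the shadow $k$-binomial decomposition $(\beta_0,\ldots,\beta_{k-1})$ coincides with the ordinary $k$-binomial decomposition $(a_0,\ldots,a_{k-1})$ of $|S|$ appearing in \eqref{eq:hypot_ref}. Combining this with Theorem~\ref{thm:hypotenusal} produces the dictionary
\[ a_i = n - w_{i+1} - (i+1)\quad (0\le i\le k-2), \qquad a_{k-1} = n - w_k - (k-1), \]
and hence
\[ a_j - a_{j+1} = w_{j+2} - w_{j+1} + 1 \ \ (0\le j\le k-3),\qquad a_{k-2} - a_{k-1} = w_k - w_{k-1}. \]
Under this dictionary, the inequalities in \eqref{eq:hn} are equivalent to lower bounds on successive increments of the hypotenusal-process sequence $(w_i)$, and the ``$+1$'' that appears for indices $i<t$ but is absent for the last gap $a_{k-2}-a_{k-1}$ is exactly the ``$+1$'' dictated by the index shift at $\beta_{k-1}$ in \eqref{eq.finding_b}.

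Next I would reformulate the hypothesis ``$\Delta^t(S)$ is not an initial segment of the colex order'' in terms of the $w_i$'s: being an initial segment corresponds, via Theorem~\ref{thm:hypotenusal}, to the associated $w_i$'s vanishing over the range relevant to the iterated shadow, so failing to be an initial segment at depth $t$ forces some $w_{k-t}\ge 1$ (or an analogue at that index window). The heart of the proof is then the claim that the hypotenusal process itself, viewed as an update rule on the sequence $(w_i)$, minimally grows according to exactly the Lucas \cite{lucas1891theorie} / Sylvester--Hammond \cite{sylvester1887x,sylvester1888iv} recurrence defining $a[n]$. One proves by induction that once $w_{k-t}\ge 1$, the subsequent increments $w_{k-t+1}-w_{k-t}, w_{k-t+2}-w_{k-t+1},\ldots$ are bounded below by $a[0],a[1],\ldots$, with equality propagating when the process is driven minimally. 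Translating these increment inequalities back through the dictionary of the first step yields the claimed \eqref{eq:hn}, and the different form of the last inequality $a_{k-2}-a_{k-1}\ge a[t-1]$ falls out automatically from the boundary case of the dictionary at $i=k-1$.

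For the sharpness assertion I would, in the reverse direction, start from a family whose deviation from the colex initial segment at depth $t$ is minimal (a single position), and use the inverse of the dictionary together with the hypotenusal recurrence to read off a concrete sequence $(a_0,\ldots,a_{k-1})$ for which the chain of inequalities in \eqref{eq:hn} is saturated. That this recipe actually produces an extremal family is precisely the content of Theorem~\ref{thm:charac} applied with $\beta_{k-1}=1$. The main obstacle will be the inductive bookkeeping in the heart of the argument: the hypotenusal recurrence is superexponential and the minimal-growth condition depends on the current value of $w_j$, not merely on the previous increment, so one has to track both quantities simultaneously and verify that the arithmetic of \eqref{eq.finding_b} — in which the binomial coefficients have shifted arguments — matches the Lucas/Sylvester--Hammond recurrence term by term. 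This matching, rather than being a conceptual surprise, is the combinatorial reason the hypotenusal numbers intrinsically appear in the shadow-minimization problem.
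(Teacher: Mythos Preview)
Your overall strategy matches the paper's: translate the $a_i$'s into wall positions $w_i$ via Theorem~\ref{thm:hypotenusal}, then show that the increments $w_{i+1}-w_i$ are bounded below by the hypotenusal numbers once the process ``ignites''. The dictionary you wrote down is correct, and the paper indeed derives Theorem~\ref{thm:hn} from exactly this kind of increment bound (Proposition~\ref{p.shad_hypo} together with the unnamed lemma following Lemma~\ref{lem:hip_num}).

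However, your reformulation of the hypothesis ``$\Delta^t(S)$ is not a colex initial segment'' as ``$w_{k-t}\ge 1$'' is not right, and this is where the real work lies. The wall positions $w_i$ are positive for the colex too; they simply record how many edges have been processed. What distinguishes the colex is that every ball sits \emph{exactly} at the wall and is annihilated with no descendants (this is Lemma~\ref{lem:colex_B_i}: for the colex, $|B_i|=1$ and the unique dominating set has size $i-1$). To get the hypotenusal growth you need a ball strictly to the left of the wall at iteration $k-t$, and producing such a ball from the non-colex hypothesis is not automatic from the $w_i$'s alone. The paper obtains it through a structural dichotomy on the hypergraph (Claim~\ref{cl:edges_in_non_colex} and Observation~\ref{obs:coeff}): the first edge $e_{t+1}$ in a supercomfortable ordering that breaks the colex pattern either yields a dominating set of size $<t$ (hence a ball at position $<t$ while the wall is at $t$), or yields two specific binomial coefficients whose balls force the growth with a one-step delay. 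This case analysis is the missing idea in your sketch, and it is also what accounts for the two alternatives in Proposition~\ref{p.shad_hypo} (bounds $a[i]$ versus $a[i-1]$), of which Theorem~\ref{thm:hn} records the weaker one.

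For sharpness, the paper does not argue abstractly but exhibits the explicit hypergraphs $\{(1,2),(3,4,5)\}$ and $\{(1,2),(1,3),(2,3,4)\}$ (with $n$ large), which realise equality in the two branches respectively; your proposed construction ``minimal deviation at a single position'' is the right intuition but would need to be made concrete along these lines.
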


Another result which illustrates  the robustness of the colex order is that, eventhough for most cardinalities the colex is not the unique extremal family with such cardinality, it holds that, for most cardinalities, all the examples have the property that their third shadow is already the initial colex segment.

\begin{theorem}\label{thm:depth3} Let $k<n$ be positive integers and $N=\binom{n}{k}$. Let $U\subset [N]$ be the set of integers for which all extremal families $S\subset \binom{[n]}{k}$ with cardinality $\binom{\a}{k}\stackrel{b}{=}m\in U$ satisfy that $\Delta^{4}(S)\cong I_{n,k-4}(\binom{\a}{k-4})$. Then
	$$
	\lim_{n\to\infty} \frac{|U|}{N}=1.
	$$
\end{theorem}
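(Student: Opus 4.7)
The plan is to bound the complement $B=[N]\setminus U$ by $O(n^{k-4})$, which compared to $N=\binom{n}{k}=\Theta(n^{k})$ will give $|B|/N\to 0$. First, any $m$ whose $k$-binomial decomposition has length less than $k$ already lies in $U$: by Theorem~\ref{thm:card} the unique extremal family of such a cardinality is $I_{n,k}(m)$, whose iterated shadows are themselves initial colex segments, so $\Delta^{4}(S)\cong I_{n,k-4}(\binom{\a}{k-4})$ automatically. Hence I may restrict to $m$ with full-length decomposition $(a_0,\ldots,a_{k-1})$, of which there are $\binom{n-1}{k}\sim N$.

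For $m\in B$, some extremal family $S$ of size $m$ has $\Delta^{4}(S)\not\cong I_{n,k-4}(\binom{\a}{k-4})$, so Theorem~\ref{thm:hn} applied at $t=4$ yields the hypotenusal gap inequalities $a_{k-5}-a_{k-4}\ge a[0]+1$, $a_{k-4}-a_{k-3}\ge a[1]+1$, $a_{k-3}-a_{k-2}\ge a[2]+1$, and $a_{k-2}-a_{k-1}\ge a[3]$. These four constant lower bounds alone hold on a subset of $[N]$ of density tending to $1$, so the argument must be refined. The refinement I propose is to apply Theorem~\ref{thm:hn} iteratively: by the ``moreover'' part of Theorem~\ref{thm:kk} each $\Delta^{j}(S)$ is extremal in $\binom{[n]}{k-j}$, and since $\Delta^{4-j}(\Delta^{j}(S))=\Delta^{4}(S)$ is not an initial colex segment, the hypotenusal condition at $t'=4-j$ applies to the $(k-j)$-binomial decomposition of $|\Delta^{j}(S)|$ for each $j\in\{0,1,2,3\}$. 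For extremal $S$ this decomposition equals $\binom{a_{0}}{k-j}+\cdots+\binom{a_{k-j-1}}{1}+1$, with an explicit ``$+1$-folding'' rule relating it back to the decomposition of $|S|$. Propagating these four layers of hypotenusal constraints through the hypotenusal process of Section~\ref{sec:hypotenusal} (cf.~Theorem~\ref{thm:hypotenusal}) should force the tail $(a_{k-5},\ldots,a_{k-1})$ into one of $O(1)$ configurations for each fixed value of $a_{k-1}$.

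The counting is then immediate: $a_{k-1}$ ranges over $[1,n-1-D]$, where $D=D(k)$ is a constant determined by the sum of the minimum tail gaps and the hypotenusal numbers $a[0],\ldots,a[3]$; for each fixed $a_{k-1}$ the tail $(a_{k-5},\ldots,a_{k-2})$ is determined up to a bounded number of choices; and the prefix $(a_{0},\ldots,a_{k-6})$ can be any strictly decreasing tuple in $\{a_{k-5}+1,\ldots,n-1\}$, contributing $\binom{n-1-a_{k-5}}{k-5}$ possibilities. The hockey stick identity then gives $|B|=O(\binom{n-1-D}{k-4})=O(n^{k-4})$, and hence $|B|/N=O(n^{-4})\to 0$.

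The main obstacle will be the cascading step in paragraph two: proving that the four iterated hypotenusal conditions, combined with the algebraic $+1$-folding relations between consecutive shadow decompositions, truly collapse the tail of $(a_{0},\ldots,a_{k-1})$ to $O(1)$ configurations per $a_{k-1}$. This is where the rapid growth of the hypotenusal numbers $a[i]$ should enter essentially: any slack in a hypotenusal inequality at shadow level $4-j$ ought to amplify into strictly larger slack at level $4-(j-1)$, so that after four levels of iteration the outermost inequalities of Theorem~\ref{thm:hn} must hold with only bounded slack, yielding the desired sparsity.
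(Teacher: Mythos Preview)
Your proposal contains a genuine gap at exactly the point you flag as the ``main obstacle.'' The refinement you propose---applying Theorem~\ref{thm:hn} to each iterated shadow $\Delta^{j}(S)$ with $t'=4-j$---produces no new information. For an extremal $S$ the $(k-j)$-binomial decomposition of $|\Delta^{j}(S)|$ is (up to the $+1$ folding you mention) precisely the truncation $(a_{0},\ldots,a_{k-j-1})$, so the hypotenusal inequalities you extract from $\Delta^{j}(S)$ are a \emph{subset} of the ones you already have from $S$ itself. There is no cascading, and the ``slack amplification'' you hope for does not occur: nothing in Theorem~\ref{thm:hn} bounds the slack from above. Consequently the claim that the tail $(a_{k-5},\ldots,a_{k-1})$ is determined up to $O(1)$ configurations per value of $a_{k-1}$ is false---the gaps may be arbitrarily large subject only to the fixed lower bounds $a[i-1]+1$---and the target estimate $|B|=O(n^{k-4})$ is not attainable by this route.

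The paper's argument uses a different mechanism that you are missing. Rather than extracting constant lower bounds on the gaps, it exploits the \emph{dynamics} of the hypotenusal process: once a ball survives (which is what ``$\Delta^{4}(S)$ not colex'' means at the level of Theorem~\ref{thm:hypotenusal}), the number of its descendants at each subsequent iteration is governed by the current wall displacement, so a gap of order $g$ at one step forces a gap of order $g^{2}$ two steps later. The upshot is a \emph{relation between gaps}, roughly $a_{k-2}-a_{k-1}\gtrsim (a_{k-4}-a_{k-3})^{2}$, rather than a bound on each gap separately. The proportion of $k$-binomial decompositions satisfying such a quadratic relation among their last few gaps tends to zero as $n\to\infty$, which is how the density statement is obtained. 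To repair your argument you would need to replace the iterated use of Theorem~\ref{thm:hn} by this quadratic growth of descendants in the bin--ball--wall process (Section~\ref{sec:hypotenusal}).
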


Even more, given an $m$ and a $t$, we can determine whether there exists and extremal family $S$ of size $m$ for which $\Delta^t(S)$ is not an initial segment in the colex order, but $\Delta^{t+1}(S)$ is, and if that is the case, construct one, in time $O(n\; \text{poly}(k))$.

\begin{theorem}[Algorithmic construction]\label{thm:alg}
	Let $k,n$, $2\leq t<k-1$ and $m$ be given. The existence of an extremal family $S$ with cardinality $m$,  support in $[1,n]$, such that $\Delta^t(S)$ is not an initial segment of the colex order, and $\Delta^{t+1}(S)$ is the initial segment of the colex order, can be decided in time $O(n k)$. If such a family exists, an instance of $S$ can be constructed with the same time complexity. 	
\end{theorem}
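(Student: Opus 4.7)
The plan is to reduce the decision problem to verifying the necessary hypotenusal inequalities from Theorem~\ref{thm:hn}, and, when those hold, to construct an explicit witness by inverting the hypotenusal process of Theorem~\ref{thm:hypotenusal}.

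Step one is to compute the $k$-binomial decomposition $(a_0, a_1, \ldots, a_{k-1})$ of $m$ greedily: take the largest $a_0 \le n$ with $\binom{a_0}{k} \le m$ (by binary search or incremental evaluation), subtract, and iterate with $k$ replaced by $k-1$. This runs in $O(k \log n)$ time, well within the $O(nk)$ budget. Step two is to compute the hypotenusal numbers $a[-1], a[0], \ldots, a[t-1]$ via their defining recurrence in $O(t)$ operations and test the inequalities \eqref{eq:hn}. By Theorem~\ref{thm:hn}, if any inequality fails, no extremal family with the required depth exists and the algorithm rejects. All of this fits comfortably in $O(nk)$.

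For the construction, assume all inequalities of \eqref{eq:hn} hold. The target shadow binomial decomposition $(\beta_0, \ldots, \beta_{k-1})$ must satisfy $\beta_{k-1} \ge 1$ so that Theorem~\ref{thm:charac} certifies extremality; it must agree with the colex template at the positions corresponding to $\Delta^{t+1}(S)$; and it must differ from that template at position $k-1-t$ by exactly the amount prescribed by the equality case of Theorem~\ref{thm:hn}. Via the translation of Theorem~\ref{thm:hypotenusal}, this pins down the hypotenusal sequence $(w_1, \ldots, w_k)$, so one builds $S$ by running the hypotenusal process in reverse on a Stanley--Reisner initial configuration that realises this sequence. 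Since each of the $O(k)$ reconstruction layers touches $O(n)$ coordinates and the output is described succinctly (as the colex segment modified on at most $O(nk)$ positions), both the construction and its output fit in $O(nk)$ time.

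The main obstacle, as I see it, is the sufficiency direction: showing that whenever the hypotenusal inequalities \eqref{eq:hn} hold, the inversion actually produces a valid initial Stanley--Reisner configuration supported in $[n]$. The ``moreover'' clause of Theorem~\ref{thm:hn} supplies an equality instance as a blueprint, so the strategy is to make the inversion canonical by setting all slack to zero, using the minimal admissible gap dictated by the hypotenusal recurrence at each level $i \le t$ and extending by the colex template elsewhere. Verifying that this canonical choice yields a family whose iterated shadows match the prescribed $(w_1, \ldots, w_k)$ requires an induction on $i$ running in parallel with the hypotenusal process itself, and this is where the technical content of the argument concentrates.
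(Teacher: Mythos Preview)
Your proposal has a genuine gap at the sufficiency step, and it stems from misreading Theorem~\ref{thm:hn}. That theorem states only that the hypotenusal inequalities \eqref{eq:hn} are \emph{necessary} for an extremal family of the given depth to exist; the ``moreover'' clause merely asserts that \emph{some} $(t,(a_0,\ldots,a_{k-1}))$ achieve equality, not that every cardinality satisfying \eqref{eq:hn} admits such a family. So your Step~two may accept instances for which no witness exists, and conversely you have no argument that passing the test guarantees your ``inversion'' succeeds. The phrase ``run the hypotenusal process in reverse on a Stanley--Reisner initial configuration that realises this sequence'' is not an algorithm: the hypotenusal process is many-to-one (different initial configurations can yield the same wall sequence), and nothing in the paper tells you how to synthesise a legitimate hypergraph of a family from a prescribed wall sequence.

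The paper's route is quite different and does the real work you are skipping. It builds two explicit one-parameter families of candidate hypergraphs, Constructions~$A$ and~$B$ (Section~\ref{sec:cons_a_b}), designed around the dichotomy of Claim~\ref{cl:edges_in_non_colex}. The substantive content is Propositions~\ref{prop:conap_best}, \ref{prop:cona_best}, \ref{prop:conb_best}: if \emph{any} extremal family of the given depth and cardinality exists, then one of Construction~$A$ or Construction~$B$ (with suitably chosen edge-counts $n_{j+2}',\ldots,n_k'$) already realises it. The algorithm therefore does not test numerical inequalities at all; it simply runs the hypotenusal process \emph{forward} on Construction~$A$, greedily choosing the $n_i'$ to match the target wall positions, aborts if the wall overshoots, and then tries Construction~$B$. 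Correctness is by the ``A or B suffices'' propositions; the $O(nk)$ bound comes from bounding the number of balls per iteration by~$n$ and the number of iterations by~$k$. None of this machinery appears in your sketch, and your inductive inversion idea would need to reproduce those propositions to be complete.
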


The next result, Theorem~\ref{thm:solid_part}, shows that, in a specific sense, every family is an \emph{induced} subfamily of an extremal family; this provides a tool to build extremal families which are not initial segments of the colex order. Therefore, in another sense, we can find extremal families that are as far away as the colex as desired (at least in a localized area of the support).  
For two families $A,B$ of subsets we write 
\begin{equation}\label{eq:vee_not}
A\vee B=\{a\cup b: a\in A, b\in B\}.
\end{equation}

\begin{theorem}\label{thm:solid_part} For each non-empty family  $S\subseteq \binom{[n]}{k}$ there is a positive integer $r_0=r_0(S)$ such that, for every $r\ge r_0$, the family $S'$ of $k$--subsets of $[n+r]$ defined as
\begin{equation} \label{eq:family_ext}
	S'=S'(r)=\bigcup_{i=1}^{k-1} \left(\Delta^{i}(S)\vee \binom{[n+1,n+r]}{i}\right),
\end{equation}
	is extremal. 
\end{theorem}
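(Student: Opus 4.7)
The plan is to determine the shadow $k$-binomial decomposition of $S'$ in terms of that of $S$, and then invoke the characterization Theorem~\ref{thm:charac}. If $(\beta_0,\ldots,\beta_{k-1})$ is the shadow $k$-binomial decomposition of $S$, I will show that the shadow $k$-binomial decomposition of $S'$ is $(r+\beta_0,\ldots,r+\beta_{k-1})$. The conclusion then follows by taking $r_0(S):=\max\{1,1-\beta_{k-1}\}$, since this ensures $r+\beta_{k-1}\ge 1$ and hence extremality by Theorem~\ref{thm:charac}.

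The central step will be to establish a self-similarity identity for the construction under the shadow operator: $\Delta(S')$ admits the same form of decomposition, but with $\Delta(S)$, viewed as a $(k-1)$-family, playing the role of $S$. I would verify this by showing that a $(k-1)$-set $T=X\cup Y$, where $X\subseteq[n]$ and $Y\subseteq[n+1,n+r]$, lies in $\Delta(S')$ if and only if $X\in\Delta^{|Y|+1}(S)$: a short case analysis on whether a witnessing extension $x$ lies in $[n]$ or in $[n+1,n+r]\setminus Y$ reduces both cases to this single condition. Iterating $s$ times then gives
$$
|\Delta^s(S')|=\sum_{j\ge 0}\binom{r}{j}\,|\Delta^{j+s}(S)|,\qquad 0\le s\le k-1,
$$
where the boundary term $j+s=k$ is read with the convention $|\Delta^k(S)|=1$.

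I would then substitute $|\Delta^{j+s}(S)|=\sum_{p=0}^{k-1}\binom{\beta_p}{k-s-j-p}$ into the formula above, swap the order of summation, and apply Vandermonde's identity $\sum_{j\ge 0}\binom{r}{j}\binom{\beta_p}{c-j}=\binom{r+\beta_p}{c}$ to collapse the inner sum, yielding
$$
|\Delta^s(S')|=\sum_{p=0}^{k-1}\binom{r+\beta_p}{k-s-p},\qquad 0\le s\le k-1.
$$
Since the chain $\beta_0>\cdots>\beta_{k-2}\ge\beta_{k-1}$ is preserved under the uniform shift $\beta_p\mapsto r+\beta_p$, the sequence $(r+\beta_0,\ldots,r+\beta_{k-1})$ satisfies the recursive formula~\eqref{eq.finding_b} and is therefore the shadow $k$-binomial decomposition of $S'$.

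The main obstacle I anticipate is the self-similarity identity $\Delta(S')=$ ``same construction on $\Delta(S)$'', and in particular the careful treatment of the boundary cases $|Y|=0$ and $|Y|=k-1$, which pin down exactly which ``corner'' pieces must appear in the union defining $S'$ for the recursion to close cleanly and for $|\Delta^k(S)|=1$ to be the honest boundary value. Once this is settled, the Vandermonde collapse and the appeal to Theorem~\ref{thm:charac} are mechanical.
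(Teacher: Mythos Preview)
Your strategy and the paper's reach the same intermediate conclusion --- the shadow $k$-binomial decomposition of $S'$ is $(\beta_0+r,\ldots,\beta_{k-1}+r)$ --- by genuinely different routes. The paper argues structurally: it claims that $H(S')$ is $H(S)$ with the $r$ new points adjoined as isolated vertices, so the wall sequence $(w_i)$ from Proposition~\ref{prop.translation}\ref{en:prop_trans_2} is unchanged and each $\beta_i$ becomes $(n+r)-w_i-\cdots=\beta_i+r$. Your route is a direct enumerative computation of $|\Delta^s(S')|$ followed by a Vandermonde collapse; this is more elementary and completely bypasses the hypergraph/hypotenusal machinery, at the cost of not exposing the fact that $S$ and $S'$ share a Stanley--Reisner complex.

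There is, however, a real gap at $s=0$. Your self-similarity analysis is correct and yields $\Delta(S')=\bigcup_{i=0}^{k-1}\bigl(\Delta^{i+1}(S)\vee\binom{[n+1,n+r]}{i}\bigr)$, after which the recursion closes and your formula $|\Delta^s(S')|=\sum_{j\ge 0}\binom{r}{j}|\Delta^{j+s}(S)|$ holds for all $s\ge 1$. But it fails at $s=0$: since the union in~\eqref{eq:family_ext} runs only over $i\in[1,k-1]$, one has $|S'|=\sum_{j=1}^{k-1}\binom{r}{j}|\Delta^j(S)|$, missing exactly the terms $j=0$ (contributing $|S|$) and $j=k$ (contributing $\binom{r}{k}$). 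Feeding this into~\eqref{eq.finding_b} gives $\beta'_{k-1}=r+\beta_{k-1}-|S|-\binom{r}{k}\to-\infty$, so $S'$ as written is \emph{not} extremal for large $r$ (try $n=3$, $k=2$, $S=\{\{1,2\}\}$: then $|S'|=2r$, $|\Delta(S')|=r+2$, and the Kruskal--Katona bound is violated). This is in fact a typo in the statement rather than a flaw in either argument: the intended family is $\bigcup_{i=0}^{k}$, and the paper's hypergraph claim ``$H(S')=H(S)$ plus $r$ isolated vertices'' is only true for that version. For the corrected family your Vandermonde argument goes through verbatim. Your closing remark that the boundary cases ``pin down exactly which corner pieces must appear in the union'' was exactly on target --- the two missing corners are $S$ itself and $\binom{[n+1,n+r]}{k}$.
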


If we consider the families to be labelled, then two isomorphic families are considered to be different. We can prove the following relative result, which claims that \cite[Example~2.4]{furgri86} or \cite[Proposition~2.5-2.4]{furgri86} restricted to the initial segment in the colex order, give asymptotically all the examples:
\begin{theorem} \label{thm:count_1}
	Consider the families as labelled.\footnote{The hypergraphs are labelled hypergraphs, so there exists only one colex segment, for instance.}
	Let $\mathcal{E}$ be the set of extremal families in $\binom{[n]}{k}$, and let $\mathcal{E}_0$ be the set of extremal families in $\binom{[n]}{k}$ such that, for each $S\in \mathcal{E}_0$,  $\Delta(E)$ is the initial segment in the colex order. Then, for each fixed $k$,
	\[
	\lim_{n\to\infty} \frac{|\mathcal{E}|}{|\mathcal{E}_0|}=1
	\]
\end{theorem}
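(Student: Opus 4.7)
The plan is to partition $\mathcal{E}$ by the labelled shadow $T=\Delta(S)$ and reduce the problem to a weighted count at the $(k-1)$-level. For each labelled extremal $(k-1)$-family $T\subseteq\binom{[n]}{k-1}$, set $F(T)=\{S\in\mathcal{E}:\Delta(S)=T\}$; by the ``moreover'' part of Theorem~\ref{thm:kk}, $T$ is itself extremal whenever $S$ is, so
\[
\mathcal{E}=\bigsqcup_T F(T),\qquad \mathcal{E}_0=\bigsqcup_{T\cong I_{n,k-1}(|T|)} F(T).
\]
The $S_n$-symmetry of the Boolean lattice shows that $|F(T)|$ depends only on the isomorphism type of $T$, so $|\mathcal{E}|/|\mathcal{E}_0|$ becomes a weighted sum of iso-class contributions.

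For each cardinality $m'$, the characterization from \cite{sv21} together with Theorem~\ref{thm:charac} implies that the isomorphism types of extremal $(k-1)$-families of size $m'$ are bounded in number by a function of $k$, and a compression argument forces them all to share the same support size $\sigma(m')$. Hence the labelled count of any type $\tau$ equals $n!/|\mathrm{Aut}(\tau)|=\Theta(n^{\sigma(m')})$ up to a bounded factor coming from the support-automorphism subgroup. The decisive comparison is therefore in the fiber sizes $|F(T)|$.

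For $T$ an initial colex segment, the fiber contains all $I_{n,k}(m)$ with $m$ in the interval of sizes whose shadow equals $T$, plus the bottom-layer perturbations of \cite[Example~2.4]{furgri86}; this gives $|F(T)|$ at least a polynomial in $a_{k-1}$, where $a_{k-1}$ is the last coefficient of the $k$-binomial decomposition of any $m$ in that interval. For $T$ non-initial, Theorem~\ref{thm:charac} prescribes $(\beta_0,\ldots,\beta_{k-2})$ of the shadow decomposition of any $S\in F(T)$ in terms of $T$, so the only remaining freedom is in $\beta_{k-1}$; combined with the rigidity of the top layer of $S$ forced by a non-initial shadow, this yields $|F(T)|$ bounded by a constant depending only on $k$, or at worst a polynomial of strictly smaller degree in $a_{k-1}$ than the initial case.

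For asymptotically most cardinalities $m'$, the coefficient $a_{k-1}$ of the associated $m$ tends to infinity with $n$ (a direct estimate on the distribution of $k$-binomial decompositions: the fraction of $m\in[N]$ with $a_{k-1}$ bounded is $o(1)$). Thus for most $m'$ the non-initial contribution is $o(1)$ of the initial contribution, while cardinalities with short decompositions contribute only to $\mathcal{E}_0$ by Theorem~\ref{thm:card}. Summing over $m'$ yields $|\mathcal{E}\setminus\mathcal{E}_0|=o(|\mathcal{E}_0|)$, which gives the theorem. The main obstacle is making the fiber-size rigidity quantitative for non-initial $T$: one needs a clean upper bound on the number of extremal $k$-families whose shadow is a fixed non-initial $(k-1)$-family, most naturally extracted from the characterization of \cite{sv21} together with the hypotenusal process underlying Theorem~\ref{thm:charac}.
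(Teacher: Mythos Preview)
Your approach---partitioning by the labelled shadow $T=\Delta(S)$ and comparing fiber sizes---is genuinely different from the paper's, but it carries two real gaps.

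First, the assertion that the number of isomorphism types of extremal $(k-1)$-families of size $m'$ is bounded by a function of $k$ alone is false: the paper's abstract (and the machinery behind Theorems~\ref{thm:hn} and~\ref{thm:solid_part}) shows this number grows with the gap $a_{k-2}-a_{k-1}$, hence with $n$. So you cannot reduce the comparison to a bounded collection of types sharing the same labelled count $\Theta(n^{\sigma(m')})$.

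Second, the fiber-size rigidity you need for non-initial $T$ (which you flag as the main obstacle) does not hold in the form you state. If $S$ is extremal with $\Delta(S)=T$ non-initial and $a_{k-1}\ge 2$, then removing a $k$-set of $S$ that is a $k$-edge of the hypergraph $H(S)$ leaves the shadow unchanged and preserves extremality (Proposition~\ref{p.other_extremal_families}); such edges can be plentiful. So $|F(T)|$ is not bounded by a constant in $k$, and a direct fiber-by-fiber comparison does not obviously close.

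The paper sidesteps both issues by counting on the hypergraph side rather than the shadow side. It parametrizes extremal families by the size profile $(n_1,\dots,n_k)$ of $H(S)$, with $\sum n_i\le n-k$ by Remark~\ref{rmk:extremal_wall}, and writes an upper bound~\eqref{eq:upper_count} for $|\mathcal{E}|$ as a sum over such profiles of products of binomials counting choices of the edges of each size. The lower bound~\eqref{eq:count_rem_colex} for $|\mathcal{E}_0|$ fixes the edges of sizes $\le k-1$ to the unique labelled colex configuration and leaves the $n_k$ size-$k$ edges free. The ratio then collapses term-by-term because, for fixed $k$ and $n\to\infty$, the pool of available size-$k$ edges has order $n^k$ while the pools for smaller sizes have order $n^{k-1}$ or less; a short estimate shows that redistributing mass from $n_k$ to any smaller $n_j$ costs a factor $n^{-(j-t)}$ times a polynomial in $(j-t)$, which sums to $o(1)$. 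In other words, the paper's key lemma is a size-of-pool comparison on the hypergraph, not a fiber comparison over shadows; this is what makes the argument go through without controlling either the number of isomorphism types or the individual fiber sizes.
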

With Theorem~\ref{thm:count_1}, equation~\ref{eq:count_rem_colex} gives an asymptotical estimate on the number of extremal labelled families.
Further, Theorem~\ref{thm:count_1} can be refined in some instances.
\begin{theorem} \label{thm:count_3}
	Consider the families as labelled. Fix $k>0$ and let $t_0,\ldots,t_{k-1}$ be a strictly increasing sequence of positive integers.
	Let $\mathcal{E}(t_0,\ldots,t_{k-1})$ be the set of extremal families in $\binom{[n]}{k}$ that have $k$-binomial decomposition:
	\[
	\binom{n-t_0-1}{k}+\cdots+\binom{n-t_{k-2}-(k-1)}{2}+\binom{n-t_{k-1}-(k-1)}{1}
	\]
	Let $\mathcal{E}_0(t_0,\ldots,t_{k-1})$ be the set of extremal families in $\mathcal{E}(t_0,\ldots,t_{k-1})$ that are obtained from the initial segment in the colex order by removing some edges of size $k$. Then
	\[
	\lim_{n\to\infty} \frac{|\mathcal{E}(t_0,\ldots,t_{k-1})|}{|\mathcal{E}_0(t_0,\ldots,t_{k-1})|}=1
	\]
\end{theorem}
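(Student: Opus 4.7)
The plan is to partition $\mathcal{E}(t_0,\ldots,t_{k-1})$ according to the first shadow $T=\Delta(S)$ and show that among the possible $T$'s, the colex initial segments dominate the count as $n\to\infty$. By Theorem~\ref{thm:hypotenusal} applied at level $k-1$, every $S\in\mathcal{E}(t_0,\ldots,t_{k-1})$ has support $V(S)=\bigcup_{e\in S}e$ of cardinality exactly $n-t_0$. Choosing the support contributes the same factor $\binom{n}{n-t_0}$ to $|\mathcal{E}|$ and $|\mathcal{E}_0|$, so the problem reduces to counting families supported on $[N]$, where $N=n-t_0$. Setting $\mathcal{E}_T=\{S\in\mathcal{E}:\Delta(S)=T\}$, the decomposition $|\mathcal{E}|=\sum_T|\mathcal{E}_T|$ together with the observation that $\mathcal{E}_0$ is precisely the set of $S\in\mathcal{E}$ whose shadow $\Delta(S)$ is a (possibly permuted) colex initial segment gives $|\mathcal{E}_0|=\sum_{T\text{ colex}}|\mathcal{E}_T|$.

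Next, I estimate $|\mathcal{E}_T|$ for each fixed extremal $T$. Every $S\in\mathcal{E}_T$ is a subset of $\mathrm{lift}(T)=\{e\in\binom{[N]}{k}:\Delta(\{e\})\subseteq T\}$ of cardinality $m=|S|$ whose shadow covers $T$. The key point is that the deficit $C_T:=|\mathrm{lift}(T)|-m$ depends only on the fixed parameters $(t_0,\ldots,t_{k-1})$ and on the structure of $T$, not on $n$, so $|\mathcal{E}_T|=(1+o(1))\binom{|\mathrm{lift}(T)|}{C_T}=\Theta(n^{kC_T})$ (the covering condition $\Delta(S)=T$ giving only a lower-order correction). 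A structural analysis--extending the explicit $k=3$ computation, in which the colex configuration corresponds to a ``star'' of missing edges of $T$ and thus maximizes the number of shared-vertex pairs among them--shows that $C_T$ is maximal exactly when $T$ is a colex initial segment, and strictly smaller otherwise.

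Finally, I compare aggregate contributions. The number of labelled colex initial segments $T$ with the prescribed shadow decomposition is $\Theta(n^{d_{\mathrm{col}}})$ for an explicit $d_{\mathrm{col}}$, so the colex contribution to $|\mathcal{E}|$ is $\Theta(n^{d_{\mathrm{col}}+kC_{\mathrm{col}}})$. The non-colex $T$'s decompose into finitely many structural classes (parametrised by the intersection pattern of the missing portion of $T$, with further restrictions coming from the hypotenusal inequalities of Theorem~\ref{thm:hn}), and for each such class, $(\text{number of labelled }T)\cdot|\mathcal{E}_T|=\Theta(n^{d+kC_T})$ with $d+kC_T<d_{\mathrm{col}}+kC_{\mathrm{col}}$. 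The reason is that each unit decrease in $C_T$ forfeits a factor of $n^k$ from $|\mathcal{E}_T|$, whereas the corresponding gain in the combinatorial freedom to choose the non-colex $T$ grows only like $n^j$ for some $j<k$. Summing over the finitely many classes gives $\sum_{T\text{ non-colex}}|\mathcal{E}_T|=o\bigl(\sum_{T\text{ colex}}|\mathcal{E}_T|\bigr)$, which yields $|\mathcal{E}|/|\mathcal{E}_0|\to 1$.

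The main obstacle is verifying the degree inequality $d+kC_T<d_{\mathrm{col}}+kC_{\mathrm{col}}$ for every non-colex structural class simultaneously for general $k$. For $k=3$ the verification reduces to the direct shared-vertex-pair count sketched above; for larger $k$, the analogous comparison requires the recursive analysis of extremal $(k-1)$-families via the hypotenusal process of Section~\ref{sec:hypotenusal} together with the Kruskal--Katona-type bound on the upper shadow $|\nabla|$ applied level by level. Alternatively, one can induct on $k$ with a strengthened inductive hypothesis tracking the explicit polynomial asymptotics of each extremal stratum, rather than merely the ratio $|\mathcal{E}|/|\mathcal{E}_0|$.
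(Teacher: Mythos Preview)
Your partition by $T=\Delta(S)$ is, via Lemma~\ref{lem:hyp_shadow}, exactly the paper's decomposition: fixing $T$ is fixing $H^{(k-1)}(S)$, and your $C_T=|\mathrm{lift}(T)|-m$ is nothing but $n_k$, the number of size-$k$ edges in the hypergraph of $S$. So the scheme you set up coincides with the paper's one-sentence proof (``adapt Theorem~\ref{thm:count_1}; for non-colex the choices of size-$k$ edges are reduced; the $t_i$ are fixed while $n$ grows''). The first claim you leave open---that $C_T$ is strictly maximal at colex---does not need your proposed shared-vertex count for $k=3$ or an induction on $k$; it drops out of the hypotenusal process. With the walls pinned at $w_i=t_{i-1}$ by Proposition~\ref{prop.translation}\ref{en:prop_trans_2}, exactly $t_{k-1}-t_{k-2}$ balls are processed at iteration $k$; for colex $T$ each of them is the single root-ball of a size-$k$ edge, so $C_{\mathrm{col}}=t_{k-1}-t_{k-2}$, while for any non-colex $T$ Observations~\ref{obs:coeff} and~\ref{obs:1} force at least one of them to be a \emph{descendant} of a smaller edge, whence $C_T\le C_{\mathrm{col}}-1$.

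What remains is the part you explicitly flag as the obstacle: that the labelling freedom for non-colex $T$ cannot absorb the lost factor $\Theta(n^{k})$. A crude count---at most $t_{k-2}$ edges of size $\le k-1$, hence $O\bigl(n^{(k-1)t_{k-2}}\bigr)$ labelled $T$---is too weak once $t_{k-2}$ is not tiny, and neither your outline nor the paper's supplies the finer stratification that is actually needed. The mechanism that works is again the hypotenusal bookkeeping rather than an upper-shadow bound: a descendant ball at some iteration $j\le k-1$ simultaneously suppresses one size-$j$ edge from $H(T)$ (reducing the labelling freedom by a factor $\lesssim n^{k-1}$) and, by Observation~\ref{obs:1}, seeds at least one further descendant at every later iteration, in particular at iteration $k$, so the losses in $C_T$ keep pace with any gain in the choice of $T$. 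Stratifying the non-colex $T$ by this descendant profile is what makes the degree inequality go through; your sketch points elsewhere, and the paper merely gestures at it with ``$t_i$ fixed while $n$ grows''.
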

In Section~\ref{sec:count} we give a generalization of Theorem~\ref{thm:count_3} in the form of Theorem~\ref{thm:count_2}.

We just mention that in Section~\ref{sec:adding_and_substracting} we obtain some results regarding when the extremality is preserved when adding or substracting sets to the family.  These results allows for a discussion on maximal chains (in the poset of $k$-set families ordered by inclusion) of extremal families in Section~\ref{sec:maximal_chains}. However, we do not make the statements explicit in this introduction since they require notation that is introduced later in this paper.


\subsection{Organization of the paper}

In Section \ref{sec:shadow} we give the proof of Theorem \ref{thm:charac}.
Section~\ref{sec:sketch} give a sketch of the proof of Theorem~\ref{thm:main} using the ideas of translation invariance of binomial expressions from \cite{sv21}.
 In Section \ref{sec:hypergraph} we introduce a representation of a family of $k$--sets by an hypergraph (which is equivalent to the Stanley-Reisner complex) and discuss the first step towards the proof of Theorem \ref{thm:hypotenusal}. Section \ref{sec:hypotenusal} describes the hypotenusal process and completes the proof of Theorem \ref{thm:hypotenusal}. Theorems \ref{thm:depth} and \ref{thm:depth3} are consequences of Theorem \ref{thm:hypotenusal} and their proofs are given in Section \ref{sec:proof_prop_trans}, which also contains the algorithmic construction of extremal families with the proof of Theorem \ref{thm:alg}. Theorem \ref{thm:solid_part} is proved in Section \ref{sec:proof_prop_trans}.
With the hypothenusal process we are also able to show an alternative proof to Theorem~\ref{thm:card} in Section~\ref{sec:alt_to_mtfg}.

%
%

%
%
%
%
%

\section{Shadow sequences and proof of Theorem~\ref{thm:charac}}\label{sec:shadow}

In contrast with the $k$-binomial decomposition of the cardinality of a family of $k$-sets, the coefficients of the shadow $k$-binomial decomposition $\beta_i$ can be negative, and the vector $(\beta_0,\ldots,\beta_{k-1})$ has always $k$ elements. This second point implies that, as \eqref{eq.finding_b} shows, there is a discrepancy in the definition between the last coefficient $\beta_{k-1}$ of the shadow $k$-binomial decomposition and the rest of the $\beta_i$. Indeed, when computing $\beta_i$, the term $\binom{\beta_{i+1}}{0}=1$ should be considered, eventhough the value for $\beta_{i+1}$ is not yet known.\footnote{The fact that the coefficient is not known is not a problem since all the binomial coefficients with binomial denominator being zero always have $1$ as their numerical value, regardless of the binomial numerator.}
This discrepancy can be solved by framing the shadow $k$-binomial decomposition within an infinite sequence which we call the shadow $\infty$-binomial decomposition, in which all the coefficients are treated equally.
The results in Section~\ref{sec:hypergraph} involving the alternate representation of the families using hypergraphs, as well as the relation between the shadow k-binomial decompositions and the hypotenusal processes hinted in Theorem~\ref{thm:hypotenusal}, and that are further developed in Section~\ref{sec:hypotenusal}, helps to cement this approach.
Also, we define:
\begin{definition}[Full $k$-binomial decomposition] \label{def:full_k_bin}
	Let $m>0$ and $k>0$ be positive integers. Then the \emph{full $k$-binomial decomposition} of $m$ is the unique sequence of integers $\balpha=(\alpha_0,\ldots,\alpha_{k-1})$ with $\alpha_0>\alpha_1>\cdots>\alpha_{k-2}\geq \alpha_{k-1}\geq 1$ and such that
	$m=\binom{\balpha}{k}$. We use $\binom{\balpha}{k}\stackrel{fb}{=}m$ to denote that $\balpha$ is the full $k$-binomial decomposition of $m$.
\end{definition}
Observe that if the $k$-binomial decomposition is complete (so it has $k$ terms), then the full $k$-binomial decompositions equals the $k$-binomial decomposition, if it has $<k$ terms, then the last terms are obtained by 
\begin{equation}\label{eq:kbin_to_fullkbin}
\binom{a_t}{k-t}=\binom{a_t-1}{k-t}+\binom{a_t-1-1}{k-t-1}+\cdots+\binom{a_t-1-(k-t-2)}{k-t-(k-t-2)}+\binom{a_t-1-(k-t-2)}{k-t-(k-t-1)}
\end{equation}
Also observe that, if $a_t=k-t$ and $t<k-1$, then some terms of the previous equality are of the type $\binom{i}{i+1}$, with $i\geq 1$ so they have zero as their value, yet $\binom{i}{i+1-1}=1$.

\begin{remark} \label{rmk:k-binom_determines_shadow}
	By \cite[Theorem~2.1]{furgri86}, the cardinality of the extremal family fully determines the cardinalities of their shadows. Furthermore, in an extremal family the shadow $k$-binomial decomposition equals the full $k$-binomial decomposition.
\end{remark}

Our main technical result used in the proof of Theorem~\ref{thm:charac} is the following property of the shadow $k$-binomial decomposition sequence of a family.

\begin{theorem}[Monotonicity of the shadow $k$-binomial decomposition]\label{thm:main}
If $(\beta_0,\ldots,\beta_{k-1})$ is the shadow $k$-binomial decomposition of a non-empty family $S\subseteq \binom{[n]}{k}$, then $\beta_0>\beta_1>\ldots >\beta_{k-2}\ge \beta_{k-1}$.
\end{theorem}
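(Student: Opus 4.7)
The plan is to proceed by induction on $k$. The base cases are $k=1$ (the chain of inequalities is empty) and $k=2$, where $\beta_0\ge\beta_1$ reduces to $|S|\le\binom{|\Delta S|}{2}$, which holds because $S$ is a family of $2$-subsets of the $|\Delta S|$-element support of $\Delta S$.

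For the inductive step with $k\ge 3$, I would set $T=\Delta(S)\subseteq\binom{[n]}{k-1}$ and let $(\gamma_0,\ldots,\gamma_{k-2})$ be its shadow $(k-1)$-binomial decomposition. The key preliminary step is to establish the bridge relation
\[
\gamma_j = \beta_j \quad (0\le j\le k-3), \qquad \gamma_{k-2}=\beta_{k-2}+1,
\]
by equating $|\Delta^i(T)|=|\Delta^{i+1}(S)|$ for $i=k-2,k-3,\ldots,0$ and solving inductively on $i$: the extra summand $\binom{\beta_{k-1}}{0}=1$ that appears in the expansion of $|S|$ but not in the expansion of $|T|$ is absorbed by shifting $\gamma_{k-2}$ upward by one. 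Applying the inductive hypothesis to $T$ then gives $\gamma_0>\gamma_1>\cdots>\gamma_{k-3}\ge\gamma_{k-2}$, which translates via the bridge to $\beta_0>\beta_1>\cdots>\beta_{k-3}\ge\beta_{k-2}+1$, and hence the strict chain $\beta_0>\beta_1>\cdots>\beta_{k-2}$.

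For the remaining inequality $\beta_{k-2}\ge\beta_{k-1}$, note that it is equivalent (after a Pascal manipulation of the right-hand side using the bridge) to
\[
|S|\le \binom{\beta_0}{k}+\cdots+\binom{\beta_{k-3}}{3}+\binom{\beta_{k-2}+1}{2} \;=\; \sum_{j=0}^{k-2}\binom{\gamma_j}{k-j}.
\]
I would derive this from the inverse Kruskal--Katona theorem, which bounds $|S|\le g(|\Delta S|)$, where $g$ is the ``shift-up'' function determined by the $(k-1)$-binomial decomposition of $|\Delta S|$. Writing $|\Delta S|=\sum_{j=0}^{k-2}\binom{\gamma_j}{k-1-j}$ and using Pascal's identity to collapse any degeneracies in the $\gamma$-sequence, one obtains $g(|\Delta S|)\le\sum_{j=0}^{k-2}\binom{\gamma_j}{k-j}$, closing the induction.

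The main obstacle is the last step in the case when some $\gamma_j$ (equivalently, some $\beta_j$) are non-positive: then the sequence $(\gamma_0,\ldots,\gamma_{k-2})$ is not the genuine $(k-1)$-binomial decomposition of $|\Delta S|$, and one must verify that the shift-up value $\sum\binom{\gamma_j}{k-j}$ dominates the true inverse Kruskal--Katona bound. This follows from the Pascal identity $\binom{n}{j}+\binom{n}{j-1}=\binom{n+1}{j}$ (valid for all integers $n$) together with the monotonicity of $g$, but organizing the case analysis cleanly—so that collapses, zero entries, and negative entries of the $\gamma$-sequence are all handled uniformly—is where most of the technical work is concentrated.
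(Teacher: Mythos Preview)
Your inductive framework and the bridge relation $\gamma_j=\beta_j$ for $j\le k-3$, $\gamma_{k-2}=\beta_{k-2}+1$ are correct, and they do yield the strict chain $\beta_0>\cdots>\beta_{k-2}$. The gap is in the final step. The lemma you need---that whenever $M=\sum_{j=0}^{k-2}\binom{\gamma_j}{k-1-j}$ with $\gamma_0>\gamma_1>\cdots>\gamma_{k-3}\ge\gamma_{k-2}$ one has $g_{k-1}(M)\le\sum_{j=0}^{k-2}\binom{\gamma_j}{k-j}$---is \emph{false} for arbitrary monotone integer sequences. Take $k=4$ and $(\gamma_0,\gamma_1,\gamma_2)=(5,-2,-2)$: then $M=\binom{5}{3}+\binom{-2}{2}+\binom{-2}{1}=11$, whose canonical $3$-binomial decomposition is $\binom{5}{3}+\binom{2}{2}$, giving $g_3(11)=\binom{5}{4}+\binom{2}{3}=5$; but the shifted sum is $\binom{5}{4}+\binom{-2}{3}+\binom{-2}{2}=5-4+3=4<5$. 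So Pascal's identity together with monotonicity of $g$ cannot be enough.

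The reason this particular $(\gamma_0,\gamma_1,\gamma_2)$ does not arise as the shadow $3$-binomial decomposition of an actual family $T$ is that it would force $|T|=11$ and $|\Delta T|=9$, violating Kruskal--Katona. In other words, the inequality you want really depends on the $\gamma_j$ coming from a genuine shadow sequence---equivalently, on Kruskal--Katona holding at \emph{every} level of $T$---not merely on their monotonicity, which is all your induction hypothesis supplies. You would therefore have to feed the full tower of KK inequalities for $T$ back into the comparison, and it is not clear this yields a clean argument. The paper sidesteps the whole issue: it realises each $\beta_i$ as $n-w_{i+1}-(i+1)$ (with a shift on the last term), where $w_1\le w_2\le\cdots$ are wall positions in a combinatorial ``hypotenusal'' process built from the Stanley--Reisner hypergraph of $S$; monotonicity of the $\beta_i$ is then immediate from the fact that the wall never moves left.
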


We proceed to show Theorem~\ref{thm:charac} assuming Theorem~\ref{thm:main}, which is proved in Section~\ref{sec:proof_prop_trans}.

\begin{proof}[Proof of Theorem~\ref{thm:charac}]
	First suppose that  all the elements of the shadow $k$-binomial decomposition $\bbeta=(\beta_0,\ldots,\beta_{k-1})$ of $S$ are strictly positive. Then, by their respective unicities and Theorem~\ref{thm:main}, this decomposition coincides with the full $k$-binomial decomposition of $|S|$, which is strictly positive. By Theorem \ref{thm:kk} with $i=1$ and the relation from the full $k$-binomial decomposition and the $k$-binomial decomposition of $|S|$ from \eqref{eq:kbin_to_fullkbin}, and the definition of shadow $k$-binomial decomposition, we have $|\Delta(S)|=\binom{\bbeta}{k-1}$ and $S$ is extremal. 
	
	For the other implication,
	suppose now that
	there are some non-positive elements in the shadow $k$--decomposition of $m$ and we lets show that then $S$ is not extremal.
	Let $s\geq 0$ be the largest nonnegative integer such that 
	\[
	|\Delta^s(S)|=\binom{\beta_0}{k-s}+\cdots+\binom{\beta_{k-s-3}}{3}+\binom{\beta_{k-s-2}}{2}+\binom{x}{1},\text{ where } x=
	\begin{cases}
	\beta_{k-1} \leq 0 &\text{ if }s=0 \\
	\beta_{k-s-1}+1\leq 0& \text{ if }s>0
	\end{cases}
	\]
	with the property that $x\le 0$ and $\beta_{k-s-2}\geq 0$. Let 
	\[
	|\Delta^s(S)|\stackrel{\text{fb}}{=}\binom{\c}{k-s}
	\]
	be the full $(k-s)$--binomial decomposition of $|\Delta^s(S)|$. As $x<c_{k-s-1}$, then
	\begin{equation}
	\left(\binom{c_0}{k-s}+\cdots+\binom{c_{k-s-2}}{2}\right)-\left(\binom{\beta_0}{k-s
	}+\cdots+\binom{\beta_{k-s-3}}{3}+\binom{\beta_{k-s-2}}{2}\right)<0, \label{eq.conc1}
	\end{equation}
	which implies that $(c_0,\ldots,c_{k-s-2})<_{lex} (\beta_0,\ldots,\beta_{k-s-2})$ (as both sequences are decreasing and non-negative), and therefore,
	\begin{equation}
	\left(\binom{c_0}{k-s-1}+\cdots+\binom{c_{k-s-2}}{2-1}\right)-\left(\binom{\beta_0}{k-s-1
	}+\cdots+\binom{\beta_{k-s-3}}{3-1}+\binom{\beta_{k-s-2}}{2-1}\right)<0, \label{eq.conc2}
	\end{equation}
	Then we conclude that
	\begin{align}
	\binom{\c}{k-s-1}&=\binom{c_0}{k-s-1}+\cdots+\binom{c_{k-s-2}}{1}+\binom{c_{k-s-1}}{0}&& \text{definition of $\binom{\c}{k-s-1}$}\nonumber \\
	&<\binom{\beta_0}{k-s-1}+\cdots+\binom{\beta_{k-s-2}}{1}+\binom{x}{0}&& \text{by \eqref{eq.conc2} }\nonumber \\
	&=|\Delta^{s+1}(S)|,&&\text{by definition of $\beta_i$}\nonumber
	\end{align}
	and thus $\Delta^{s}(S)$ is not extremal (both the $k$-binomial decomposition and the full $k$-binomial decomposition give the same expression when the binomial denominator is decreased by $1$). By \cite[Theorem~2.1]{furgri86} (recalled in this work as the ``moreover'' part of Theorem~\ref{thm:kk}), if $\Delta^s(S)$ is not extremal for some $s\ge 0$ then $S$ is  not extremal either. This completes the proof. 
\end{proof}





\section{Strategy to show the monotonicity result: Theorem~\ref{thm:main}} \label{sec:sketch}

An identity $\sum_{i}\alpha_i \binom{x_i}{y_i}=\sum_j \alpha_j'\binom{x'_j}{y'_j}$ of two  sums of binomial coefficients (with $x_i$,$y_i$,$x_i'$,$y_i'$ integers, and $\alpha_i$,$\alpha_i'$ real numbers) is {\it translation invariant} if, for every pair of integers $r,s$ we have
$$
\sum_{i}\alpha_i\binom{x_i+r}{y_i+s}=\sum_j \alpha_{j}' \binom{x'_j+r}{ y'_j+s}.
$$
We use the notation $\doteq$ as in  
$
\sum_{i}\alpha_i\binom{x_i}{y_i}\doteq \sum_j \alpha_j'\binom{x'_j}{ y'_j}
$
to indicate that the identity is translation invariant.
This notion was used in \cite{sv21} in order to show the characterization result of the extremal families of the Kruskal-Katona theorem; in
\cite[Proposition~14]{sv21} it was shown that the only way to obtain translation invariant identities is to transfrom one expression into the other by the use of the \emph{binomial recurrence} $\binom{n}{k}\doteq\binom{n-1}{k}+\binom{n-1}{k-1}$ and by possibly adding/removing identical terms. ($\binom{n}{0}=\binom{n-1}{0}$ yet they are not identical binomial coefficients much like $\frac{10}{5}=\frac{2}{1}$, yet $\frac{10+1}{5+1}\neq \frac{2+1}{1+1}$.)

Our first goal to prove to Theorem~\ref{thm:main} is to find a finite multiset of pairs of positive integers $\{(m_j,d_j)\}_{j\in I}$, for some indexing set $I$, depending on $S$, in such a way that:
\begin{equation}\label{eq:1}
\text{for all $i$,}\qquad |\Delta^i(S)|=\binom{n}{k-i}-\sum_{j\in I} \binom{n-m_j}{k-d_j-i}
\end{equation}
that is, we write the cardinalities of the iterated lower shadows of $S$ in such a way that all the terms $\{(m_j,d_j)\}_{j\in I}$ are fixed, and the only thing it varies are the binomial denominators, which increase when $i$ decreases. Observe that if $d_j$ are large, then the terms $\binom{m_j}{k-d_j-i}$ equal $0$ so they do not contribute to the sum.
Then we find a translation invariant expression for
$|S|=\binom{n}{k}-\sum_{j\in I} \binom{n-m_j}{k-d_j}$
of the following type:
	\begin{equation} \label{eq:2}
|S|=\binom{n}{k}-\sum_{j\in I} \binom{n-m_j}{k-d_j}\doteq \binom{\beta_0}{k}+\binom{\beta_1}{k-1}+\binom{\beta_2}{k-2}+\cdots+\binom{\beta_{k-1}-1}{1}+\binom{a}{0}+\sum_{s\leq -1} \sum_{p\in \Z} \alpha_p \binom{p}{s}
\end{equation}
\noindent 
for some integer coefficients $\alpha_p$ and some  integer $a$. Notice that the part $\sum_{s\leq -1} \sum_{p\in \Z} \alpha_p \binom{p}{s}$ has value $0$ and the term $\binom{a}{0}$ has value $1$. (The existence of the term $\binom{a}{0}$ explains the $+1$ in the definition of $\beta_{k-1}$ in \eqref{eq.finding_b}, and the $-1$ in the term $\binom{\cdot}{1}$ in \eqref{eq:2}.)
By \eqref{eq:1} and the fact that the expression in \eqref{eq:2} is translation invariant, we conclude that
\begin{align} \label{eq:2}
|\Delta^i(S)|&=\binom{n}{k-i}-\sum_{j\in I} \binom{n-m_j}{k-d_j-i} \nonumber \\
\doteq& \binom{\beta_0}{k-i}+\binom{\beta_1}{k-1-i}+\binom{\beta_2}{k-2-i}+\ldots+\binom{\beta_{k-1}-1}{1-i}+\binom{a}{0-i}+\sum_{s\leq -1} \sum_{p\in \Z} \alpha_p \binom{p}{s-i} \nonumber 
\end{align}
thus showing that the sequence $(\beta_0,\ldots,\beta_{k-1})$ is the shadow $k$-binomial decomposition of $S$.\footnote{Note that we have
\[
\text{for all $i$},\qquad |\Delta^i(S)|= \binom{\beta_0}{k-i}+\binom{\beta_1}{k-1-i}+\binom{\beta_2}{k-2-i}+\ldots+\binom{\beta_{k-1}}{1-i}
\] but this equality is not translation invariant, as it does not make sense to ask for translation invariance when looking at the cardinality of a set. Further, the right hand side is not translation invariant with $\binom{n}{k-i}-\sum_{j\in I} \binom{n-m_j}{k-d_j-i}$ since the terms $\binom{a}{0-i}+\sum_{s\leq -1} \sum_{p\in \Z} \alpha_p \binom{p}{s-i}$ are missing.}
The arguments will then conclude the properties of the shadow $k$-binomial decomposition (namely its monotonicity) from the properties of the original pairs $\{(m_j,d_j)\}_{j\in I}$.

There are different combinations of $\{(m_j,d_j)\}_{j\in I}$ that give the same shadow $k$-binomial decomposition yet all of them are transltion invariant and/or leave a translation invariant \emph{leftover} $\binom{a}{0}+\sum_{s\leq -1} \sum_{p\in \Z} \alpha_p \binom{p}{s}$; indeed, being translation invariant induces an equivalence relation on the set of binomial expressions \cite[Proposition~14]{sv21}.
The computation of the pairs $\{(m_j,d_j)\}_{j\in I}$ is done in Section~\ref{sec:hypergraph} and Section~\ref{sec:hypotenusal}. Then the argument of the monotonicity of the $\beta_i$, Theorem~\ref{thm:main}, is given in Section~\ref{sec:proof_prop_trans} by the technical propositions Proposition~\ref{prop.translation}~\ref{en:prop_trans_1} and \ref{en:prop_trans_2}, which can be found as Section~\ref{sec:proof_prop_trans}. In summary, the monotonicity of the shadow $k$-binomial decomposition follows from the properties of the pairs $\{(m_j,d_j)\}_{j\in I}$ although in a not so direct way; all the ideas for the arguments leading to the proof of monotonicity are similar to those in \cite{sv21}.

\section{Hypergraph representation of a family of \texorpdfstring{$k$}{k}-sets
}\label{sec:hypergraph}

A family of $k$-sets, $S\subset \binom{[n]}{k}$, can be presented in several equivalent ways. The following is one of them.

\begin{definition}[Hypergraph of a family of $k$-sets]\label{def.hyp_family}
	Given a family of $k$-sets $S\subseteq \binom{[n]}{k}$, $H=H(S)=(V,E\subset \mathcal{P}(V))$, $E=E_1\cup \cdots \cup E_k$ with $E_i$ containing the edges of size $i$, is \emph{the hypergraph of $S$} when
	\begin{enumerate}[label=(\roman*)]
		\item $V=V(H)=[n]$
		\item for each $i\in[1,k]$, $e\in E_i\subset \binom{V(H)}{i}$ is an $i$-edge of $H$ if
		\begin{quote}
		$e\notin \Delta^{k-i}(S)$, \;\;  and 
	\;\;	$e$ is minimal (with respect to containment) with such property.
		\end{quote}
	\end{enumerate}
\end{definition}

\noindent We list below some equivalent formulations, and properties, that are relevant later.
\begin{enumerate}[label=(P\arabic*)]
	\item $H$ has no two edges contained in each other. Thus the hyperedges form an antichain in the poset $(\mathcal{P}([n]),\subset)$. If, in addition to the antichain property, we add the extra property that there is no $t< k$ and set $A\subset [n]$, $|A|=k-t$ with all the edges $\binom{[n]\setminus A}{t}\vee A$ in the hypergraph $H$, then such hypergraph is a hypergraph of a family of $k$-sets.
	\item \label{en:p1} $H$ is the hypergraph whose edges are the minimal elements in the poset $\bigcup_{i=0}^{k-1}\left[\binom{[n]}{k-i} \setminus \Delta^i(S)\right]$ ordered by inclusion. That is, $H$ records the information of $S$ and its lower shadows via its complements in $\binom{[n]}{k-i}$.
	\item \label{en:p2} Point \ref{en:p1} means that $H$ have the following  property: \begin{quote}if $e\in \binom{[n]}{k-j}$ contains an edge of $H$, then $e\notin \Delta^{j}(S)$ (equivalently, $e\in [\Delta^{j}(S)]^c$, or, also that no extension of an edge in $H$ to a $k$-set belong to $S$).\end{quote}
	\item The formulation of \ref{en:p2} implies that Definition~\ref{def.hyp_family} is a reformulation of the Stanley-Reisner ideal of the simplicial complex induced by the family $S$.\footnote{
Given a simplicial complex $\mathcal{S}$ in $[n]$ and a field $\mathbb{F}$, the \emph{Stanley-Reisner ideal} of $\mathcal{S}$, $I_{\mathcal{S}}$, is given by the monomials in $\mathbb{F}[x_{1},\ldots,x_{n}]$ such that $x_{i_1}\cdots x_{i_j}\notin \mathcal{S}$. Then the \emph{Steiner-Reisner ring} is given by $\mathbb{F}[x_{1},\ldots,x_{n}]/I_{\mathcal{S}}$. See \cite[Chapter~II]{stanley2007combinatorics}. In our case, the simplicial complex is given by the family of $k$-sets and all its lower shadows.}
	\item The hypergraph $H$ can have $1$-edges. These $1$-edges are different from isolated vertices of $H$ and are precisely those elements in $[n]$ not belonging to the support of $S$.
	\item \label{en:p6} Recall that two families $S_1$ and $S_2$ of $k$-sets of $[n]$ are said to be isomorphic if there exists a permutation $\sigma: [n] \to [n]$ mapping $k$-sets of $S_1$ to $k$-sets of $S_2$ and no $k$-sets of $S_1$ to no $k$-sets of $S_2$.
	Then \begin{quote} Two families of $k$-sets are isomorphic if and only if their respective hypergraphs are isomorphic.
		\end{quote}
	\item Using \ref{en:p2}, if $S\subseteq \binom{[n]}{k}$ has hypergraph $H(S)=([n],\{e_1,\ldots,e_s\})$, then, using the $\vee$ notation from \eqref{eq:vee_not},
	\begin{equation}\label{eq.more_gen}
	S^{c}= \bigcup_{i=1}^s \left[e_i\vee \binom{[n]\setminus e_i}{k-|e_i|}\right], \;\;\text{or, more generally}\;\; 	[\Delta^{j}(S)]^c= \bigcup_{i=1}^s\left[ e_i\vee \binom{[n]\setminus e_i}{k-j-|e_i|}\right]
	\end{equation}
	where $e_i\vee \binom{[n]\setminus e_i}{k-j-|e_i|}$ is an empty set if $|e_i|>k-j$.
	
	\item  \eqref{eq.more_gen} can be used in conjunction with Definition~\ref{def.hyp_family} to conclude the following:
	\begin{lemma}\label{lem:hyp_shadow}
		Let $H=([n],E_1\sqcup E_2\sqcup \cdots \sqcup E_k)$ be the hypergraph of $S\subseteq \binom{[n]}{k}$, with $E_i$ being the hyperedges of size $i$ in $H$. Then $H^{(k-i)}:=([n],E_1\sqcup E_2\sqcup \cdots \sqcup E_{k-i})$ is the hypergraph of $\Delta^i(S)\subseteq \binom{[n]}{k-i}$.
	\end{lemma}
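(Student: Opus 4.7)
The plan is to unfold both sides of the claimed equality directly against Definition~\ref{def.hyp_family}; once this is done, the statement reduces to an identity that follows from the composition rule for iterated shadows.

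First I would record the elementary identity $\Delta^{a}(\Delta^{b}(S))=\Delta^{a+b}(S)$ for all non-negative integers $a,b$ with $a+b\le k-1$, which is immediate from the recursive definition of $\Delta^{i}$ given in Theorem~\ref{thm:kk}. Then, applying Definition~\ref{def.hyp_family} to the family $\Delta^{i}(S)\subseteq\binom{[n]}{k-i}$, the $\ell$-edges of its hypergraph (for $\ell\in[1,k-i]$) are precisely the sets $e\in\binom{[n]}{\ell}$ satisfying $e\notin\Delta^{(k-i)-\ell}(\Delta^{i}(S))$ that are minimal (with respect to containment) with this property. Using the composition identity, $\Delta^{(k-i)-\ell}(\Delta^{i}(S))=\Delta^{k-\ell}(S)$, so the defining condition becomes $e\notin\Delta^{k-\ell}(S)$, together with the corresponding minimality.

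Next I would compare this with Definition~\ref{def.hyp_family} applied to $S$: the set $E_{\ell}$ consists of the minimal elements $e\in\binom{[n]}{\ell}$ with $e\notin\Delta^{k-\ell}(S)$. The only formal discrepancy between the two descriptions is that the minimality in $H$ is taken among sets $f\subseteq[n]$ of size at most $k$, while for the hypergraph of $\Delta^{i}(S)$ it is taken among $f$ of size at most $k-i$. This discrepancy is harmless whenever $\ell\le k-i$, since any proper subset $f\subsetneq e$ has $|f|<\ell\le k-i$ and so automatically lies in the smaller ambient range. Therefore the $\ell$-edges of the hypergraph of $\Delta^{i}(S)$ coincide with $E_{\ell}$ for every $\ell\in[1,k-i]$, which is exactly the claim that $H^{(k-i)}$ is the hypergraph of $\Delta^{i}(S)$.

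There is no genuine obstacle; the argument is a careful bookkeeping of Definition~\ref{def.hyp_family}. The only step that seems to deserve explicit verification is the minimality comparison across the two ambient sizes $k$ and $k-i$, handled as above, and a brief remark that this description is consistent with the extremal cases $i=0$ (which gives $H$ itself) and $i=k-1$ (whose $1$-edges are exactly the singletons outside the support of $S$, matching the $1$-edges of $H$).
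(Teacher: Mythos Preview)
Your proof is correct and follows essentially the same route the paper indicates: the paper states the lemma as a direct consequence of Definition~\ref{def.hyp_family} (together with \eqref{eq.more_gen}), and your argument is precisely that unfolding, using the composition identity $\Delta^{a}(\Delta^{b}(S))=\Delta^{a+b}(S)$ in place of \eqref{eq.more_gen}. Your explicit check that the minimality condition is unaffected by restricting the ambient size from $k$ to $k-i$ is the one point the paper leaves implicit, and you handle it correctly.
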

\end{enumerate}

\subsection{Using the hypergraph to achive the strategy given in Secion~\ref{sec:sketch}}

By considering the new contributions of each edge $e_i$ of the hypergraph $H(S)=([n],\{e_1,\ldots,e_s\})$ to $S^c$ in the order of $\{e_1,\ldots,e_n\}$ given by the subindices, we can write $S^c$ as a disjount union: 
\begin{equation} \label{eq:adding_sets}
S^{c}= \bigsqcup_{i=1}^s \left[e_i\vee \binom{[n]\setminus e_i}{k-|e_i|}\setminus \left[\bigcup_{j=1}^{i-1} \left[e_j\vee \binom{[n]\setminus e_j}{k-|e_j|}\right]\right]\right]
\end{equation} 
Now the goal is to write each of the terms in \eqref{eq:adding_sets}, namely
\begin{equation} \label{eq:adding_edges_1}
\left[e_i\vee \binom{[n]\setminus e_i}{k-|e_i|}\setminus \left[\bigcup_{j=1}^{i-1} \left[e_j\vee \binom{[n]\setminus e_j}{k-|e_j|}\right]\right]\right]
\end{equation}
 as a disjoint union of extensions of edges of $H$. For that we combine 
  Lemma~\ref{lem:d} and Proposition~\ref{prop:tree_prop} to turn \eqref{eq:adding_edges_1} first into \eqref{eq:conseq_def_bi} and then into \eqref{eq:tree_count_1} which is counted by \eqref{eq:tree_count_2}; thus the disjoint union is compatible with the strategy oulined in  Section~\ref{sec:sketch} to find the pairs $\{(m_j,d_j)\}_{j\in I}$.

\begin{lemma} \label{lem:d}
	Given $S\subseteq\binom{[n]}{k}$ a family of $k$-sets with hypergraph $H=([n],\{e_1,\ldots,e_{n}\})$. 
	Let $B_j$ be the set of minimal dominating sets of $\{e_i\}_{i<j}$ not intersecting $e_j$:
	\begin{equation} \label{eq:def_bi}
	B_j= \{s \:|\: s \subseteq [n]\setminus e_j, \forall i<j\; [s \cap e_i\neq \emptyset], s\text{ minimal by inclusion with the previous two properties}\}
	\end{equation}
	then,
	\begin{equation}\label{eq:conseq_def_bi}
	\left(\left[e_j\vee \binom{[n]\setminus e_j}{k-|e_j|}\right]\setminus \left[\bigcup_{i=1}^{j-1} \left[e_i\vee \binom{[n]\setminus e_i}{k-|e_i|}\right]\right]\right)= \bigcup_{s\in B_j} \left[e_j \vee \binom{[n]\setminus [e_j\sqcup s]}{k-|e_j|}\right]
	\end{equation}
\end{lemma}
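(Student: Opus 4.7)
The plan is to translate both sides into explicit descriptions of families of $k$-subsets of $[n]$ and then establish equality by double inclusion. Unwinding the $\vee$ notation, the left-hand side is
$$
L_j=\bigl\{T\in\binom{[n]}{k}:e_j\subseteq T,\ e_i\not\subseteq T\text{ for all }i<j\bigr\},
$$
since $e_i\vee \binom{[n]\setminus e_i}{k-|e_i|}$ is exactly the family of $k$-supersets of $e_i$. On the other hand, because $s\cap e_j=\emptyset$ for $s\in B_j$, the right-hand side is
$$
R_j=\bigcup_{s\in B_j}\bigl\{T\in\binom{[n]}{k}:e_j\subseteq T,\ T\cap s=\emptyset\bigr\}.
$$
The lemma thus reduces to showing that the condition ``$T\supseteq e_j$ and $T$ contains no $e_i$ with $i<j$'' is equivalent to ``$T\supseteq e_j$ and $T$ avoids some minimal transversal $s\in B_j$.''

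For the inclusion $L_j\subseteq R_j$, I would take $T\in L_j$ and consider its complement $C=[n]\setminus T$. Since $e_j\subseteq T$, $C\subseteq [n]\setminus e_j$; and since $e_i\not\subseteq T$ for each $i<j$, the set $C$ meets every $e_i$. So $C$ is a transversal of $\{e_i\}_{i<j}$ contained in $[n]\setminus e_j$, and by the minimality in the definition of $B_j$, $C$ contains some $s\in B_j$. This $s$ satisfies $T\cap s=\emptyset$ and $e_j\subseteq T$, so $T$ lies in the term of $R_j$ indexed by $s$.

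For the reverse inclusion $R_j\subseteq L_j$, I would take $s\in B_j$ and $T$ in the corresponding term, so that $e_j\subseteq T$ and $T\cap s=\emptyset$. For each $i<j$, the transversal property of $s$ yields some $v\in s\cap e_i$; then $v\notin T$ because $T\cap s=\emptyset$, and therefore $e_i\not\subseteq T$. Hence $T\in L_j$.

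The argument is essentially a bookkeeping identity: ``no forbidden $e_i$ is contained in $T$'' is the same as ``the complement of $T$ contains a minimal transversal of the forbidden edges.'' The only subtle point is that elements of $B_j$ are required to be disjoint from $e_j$, but this is automatic, since $T\supseteq e_j$ forces $C\subseteq [n]\setminus e_j$ in one direction, and $s\cap e_j=\emptyset$ is already built into the definition of $B_j$ in the other. There is no real obstacle beyond carefully unpacking the $\vee$ notation.
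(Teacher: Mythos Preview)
Your proof is correct and follows essentially the same approach as the paper: the paper's proof also observes that an extension of $e_j$ on the left-hand side must fail to contain each $e_i$, hence its complement is a transversal of $\{e_i\}_{i<j}$ inside $[n]\setminus e_j$, which therefore contains some $s\in B_j$. Your write-up is more careful in that you spell out both inclusions explicitly and unpack the $\vee$ notation, whereas the paper only sketches the direction $L_j\subseteq R_j$ and declares the statement shown.
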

That is,  we capture the new extensions of $e_j$, with respect to those extensions of $\{e_i\}_{i<j}$, by avoiding extending in $s$, which is a minimal set of vertices that dominates $\{e_i\}_{i<j}$. 
\begin{proof}[Proof of Lemma~\ref{lem:d}]
	If an extension of $e_j$ fully contains $e_i$ with $i<j$, then the $k$-set has already been considered as an extension of $e_i$. In particular, each extension of $e_j$ in the set given by the left hand side of \eqref{eq:conseq_def_bi} should avoid every single edge in $\{e_i\}_{i<j}$. In particular, it should avoid a minimal set (minimal by containment) that intersects every single edge in $\{e_i\}_{i<j}$, and thus should avoid a set in $B_j$. This shows the claimed statement.
\end{proof}

\subsection{Codifying the sets of extensions: a rooted tree for every edge}
\label{sec:tree_coded}
Given
\begin{itemize}
	\item a family $S\subseteq \binom{[n]}{k}$ with hypergraph $H=([n],\{e_1,\ldots,e_n\})$ with some order of the edges (here the increasing order is indicated using the subindices).
	\item an index $j\in[n]$ and the $j$-th edge $e_j\in \{e_1,\ldots,e_n\}$ with associated set $B_j$; the sets $B_j$ are defined in \eqref{eq:def_bi},
	\item  and some order on the sets $B_j=\{s_1,\ldots,s_m\}$ (the sets $B_j$ are defined in \eqref{eq:def_bi}).
\end{itemize} 
the procedure below gives a rooted tree $T_{e_{j}}$ (examples of these trees given in Figure~\ref{fig.1}).
\begin{rmk}
The following construction of the trees below depends both on the order of the edges of the hypergraph and on the oder of the sets of each $B_j$.

Regarding the orders on the edges is more convenient for the arguments in the following sections to consider what we call a \emph{comfortable ordering} on the edges; that is, and ordering in which $e_i\leq e_j \iff i<j$.

Furthermore, some particular ordering on the sets of $B_j$ helps in the arguments in Section~\ref{sec:ext_fam_prescribed_depth}.

However, let us reiterate that the particular order chosen, in both cases, is not relevant, in the sense that any ordering leads to the same conclusions.\qed
\end{rmk}

\paragraph{Procedure to construct $T_{e_j}$.}
\begin{enumerate}[label=(S\arabic*)]
	\item Let $v_1=(1,s_1)$ be the root vertex. 
	
	Initializate the list of vertices to be processed: $L=\{v_1=(1,s_1)\}$.
	$L$ contains only internal vertices (the root being considered an internal vertex, as opposed to a leaf vertex, the internal vertices have an odd number in the first coordinate, and it is related with the number of vertices that have the same second coordinate).
	\item \label{en:step2} If $L$ is empty, the process finishes and $T=T_{e_j}$ is produced.
	\item If $L$ is non-empty, select the first vertex of $L$ denoted as $u=(2t-1,s_i)$, for some $t>0$.
	\item Let $P$ be the path from $u=(2t-1,s_i)$ to the root (both vertices included in the path). 
	
	Let $\ell_e(P)$ denote the union of the labels of the edges in $P$.
	
	Let $\ell_v(P\setminus u)$ denote the union of the labels of the vertices in $P$ (with the exception of that of $u$).
	
	The label of $u=(2t-1,s_i)$, denoted as $\ell_v(u)$, is then set to: \begin{equation}\label{eq:lab_vert}
	\ell_v(u)\leftarrow s_i\setminus \ell_v(P\setminus u)\end{equation} (In particular, the label of the root is $s_1$)
	\item For each of the possible $2^{|\ell_v(u)|}$ subsets $s\in\mathcal{P}(\ell_v(u))$, do the following:
	\begin{itemize}
		\item If [$s$ is the empty set] then:
		
		Add the leaf vertex $(2t,s_i)$ adjacent to $v=(2t-1,s_i)$ using 
a directed edge $f=((2t,s_i),(2t-1,s_i))$ pointing towards $(2t-1,s_i)$. 

Label the edge $f=((2t,s_i),(2t-1,s_i))$ just added with the empty set:
		\[
		\ell_e(f)\leftarrow \emptyset
		\]
		
Label the leaf vertex $(2t,s_i)$ just added with the double label $(\ell_v(P),\ell_e(P))$
		\begin{equation}\label{eq:lab_leaf}
		\ell_v(\;(2t,s_i)\;)\leftarrow (\ell_v(P),\ell_e(P)),
		\end{equation} where $P$ is the path from $(2t-1,s_i)$ to the root.
		\item If [$s$ is not the empty set] and [there exists 
		a set $s_0$ in $B_j$,   
		with $s_0\cap [\ell_e(P)\sqcup s]=\emptyset$],
		then:
		
		Let $r$ be the minimal index in the ordering of the sets $B_j$ with the property that $s_r\cap [\ell_e(P)\sqcup s]=\emptyset$.
		
		Add the internal vertex $(2t'+1,s_r)$, where $t'$ is the number of internal vertices with $s_r$ in the second coordinate that are already in the tree.

	Add an edge $((2t'+1,s_r),(2t-1,s_i))$ (from $(2t'+1,s_r)$ to its parent vertex $(2t-1,s_i)$).
	
	Label the edge $((2t'+1,s_r),(2t-1,s_i))$ with $s$
	\begin{equation}\label{eq:lab_edge}
	\ell_e(\;((2t'+1,s_r),(2t-1,s_i))\;)\leftarrow s
	\end{equation}
	
	Add the vertex $(2t'+1,s_r)$ to $L$, the list of internal vertices to be processed.
	
		\item If [$s$ is not the empty set] and [no such set $s_0$ exists in $B_j$], then do nothing and move to the next set $s$.
	\end{itemize}
	\item Remove $v=(2t-1,s_i)$ from $L$ and move to step \ref{en:step2}.
\end{enumerate}

Let us now show some properties of the trees $T_{e_j}$.

\begin{proposition}\label{prop:tree_prop}
With the notation and set up from the beginning of Section~\ref{sec:tree_coded}, for each edge $e_j$ the
 procedure above finishes, produces $T=T_{e_j}$ and 
	\begin{enumerate}[label=(\roman*)]
		\item \label{en:tree_prop_1}  	
		\begin{itemize}	
			\item $T$ is a rooted tree with directed and labelled edges, and labelled vertices. All labels are subsets of $[n]$, the edges are directed towards the root.
		\item The vertex set $V$ is a subset of $\mathbb{N}\times B_j$; with its projection to the second coordinate being at least $2$-to-$1$ and surjective.
		
		$T$ contains as many internal vertices as leaf vertices (the root vertex is considered an internal vertex); the internal vertices having odd first coordinate, and the leaf vertices having even first coordinate.
		
		Each leaf of $T$ is paired with its internal parent vertex; they both have the same set in $B_j$ as their second coordinate.
		\item  In an internal-vertex-to-root path, the second coordinates of their vertices are different sets of $B_j$.
		\item The label of the internal vertex $(2t-1,s_i)$ is a subset of $s_i$.
		
		 The leaf vertices have a pair of sets as label, the first being the union of the labels of the edges from its-parent-to-root path, and the second being the union of the labels of the vertices from its-parent-to-root path.
		\item The label of each edge is a subset (perhaps the $\emptyset$) of the label of its parent vertex (the vertex adjacent to the edge that is closer to the root).
		\end{itemize}
		\item \label{en:tree_prop_2} Let $\mathcal{L}$ denote the set of leaf vertices of $T$ (that are not the root vertex). For $f=(2t,s_r)\in \mathcal{L}$, let $P_f$ denote the unique path from the parent vertex of $f$ to the root. Let $\ell_e(P_f)$ (resp. $\ell_v(P_f)$) denote the union of the labels of the edges (resp. vertices) of $P_f$. (Equivalently, $\ell_e(P_f)$ is the first element in the label of $f$, and $\ell_v(P_f)$ is the second.) Then, $\ell_e(P_f)\subseteq \ell_v(P_f)$ and
		\begin{equation} \label{eq:tree_count_1}
		\left(\left[e_j\vee \binom{[n]\setminus e_j}{k-|e_j|}\right]\setminus \left[\bigcup_{i=1}^{j-1} \left[e_i\vee \binom{[n]\setminus e_i}{k-|e_i|}\right]\right]\right)=\bigsqcup_{f\in \mathcal{L}}\left[ e_j \vee  \binom{[n]\setminus [e_j\sqcup \ell_v(P_f)]}{k-|e_j|-|\ell_e(P_f)|}\right]
		\end{equation}
		\item \label{en:tree_prop_3} for the leaf $f=(2t,s_r)\in \mathcal{L}$,
		\begin{equation}
		\label{eq:tree_count_2}
		\left|\left[ e_j \vee  \binom{[n]\setminus [e_j\sqcup \ell_v(P_f)]}{k-|e_j|-|\ell_e(P_f)|}\right]\right|=\binom{n-|e_j|-|\ell_v(P_f)|}{k-|e_j|-|\ell_e(P_f)|}
		\end{equation}
	\end{enumerate}
\end{proposition}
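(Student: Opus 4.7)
My plan is to prove the three parts of Proposition~\ref{prop:tree_prop} in sequence, focusing most of the attention on part~\ref{en:tree_prop_2}. For part~\ref{en:tree_prop_1}, the basic structural features (directed edges toward the root, parity of the first coordinate, one leaf child per internal vertex, $\ell_e\subseteq\ell_v$ of the parent) can be read off the procedure. The one nontrivial point is termination, which I would deduce from the fact that along any root-path the second coordinates of the vertices are pairwise distinct elements of $B_j$. Indeed, along an internal-vertex-to-root path $P$ every non-root vertex $v'$ is entered via an edge with non-empty label $s'\subseteq\ell_v(v')\subseteq s_{i'}$; hence $s_{i'}\cap\ell_e(P)\neq\emptyset$ for each such $v'$. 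If we tried to add a new child $(2t'+1,s_r)$ with $s_r=s_{i'}$ we would require $s_r\cap\ell_e(P)=\emptyset$, contradicting the previous display. This bounds the depth of $T$ by $|B_j|$, so the list $L$ empties after finitely many steps.

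For part~\ref{en:tree_prop_2}, denote by $A_j$ the left-hand side of \eqref{eq:tree_count_1} and, for each leaf $f$, by $T_f$ the set $e_j\vee\ell_e(P_f)\vee\binom{[n]\setminus[e_j\sqcup\ell_v(P_f)]}{k-|e_j|-|\ell_e(P_f)|}$ (the $\ell_e(P_f)$ factor is needed to obtain $k$-sets and is implicit in \eqref{eq:tree_count_1}). I will define a map $\phi:A_j\to\mathcal{L}$ by the natural greedy traversal: starting at $v_1=(1,s_1)$, at a current internal vertex $u=(2t-1,s_i)$ set $s:=E\cap\ell_v(u)$; if $s=\emptyset$, go to the leaf child of $u$; otherwise find the minimal $r$ with $s_r\cap[\ell_e(P)\sqcup s]=\emptyset$ and recurse at the child with second coordinate $s_r$. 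The existence of such an $r$ follows from Lemma~\ref{lem:d}: since $E\in A_j$, some $s^*\in B_j$ is disjoint from $E$, and since $\ell_e(P)\sqcup s\subseteq E$ throughout the traversal, $s^*$ qualifies; so $\phi$ is well-defined.

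The core of part~\ref{en:tree_prop_2} is then to verify $\phi^{-1}(f)=T_f$ for every leaf $f$, from which the disjoint-union decomposition is immediate. For $\phi^{-1}(f)\subseteq T_f$: along the path traced by $\phi$, each edge label equals $E\cap\ell_v(v)$, so $E\cap\ell_v(P_f)=\ell_e(P_f)$, and at the last step $E\cap\ell_v(u)=\emptyset$; hence $E\supseteq e_j\cup\ell_e(P_f)$, $E\cap(\ell_v(P_f)\setminus\ell_e(P_f))=\emptyset$, and $|E|=k$ forces the remaining $k-|e_j|-|\ell_e(P_f)|$ elements to lie in $[n]\setminus[e_j\cup\ell_v(P_f)]$. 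For the reverse inclusion, given $E\in T_f$ the disjoint decomposition $\ell_v(P_f)=\bigsqcup_{v\in P_f}\ell_v(v)$ (built into the definition $\ell_v(v)=s_i\setminus\ell_v(\mathrm{ancestors})$) together with the identity $\ell_e(P_f)\cap\ell_v(u)=\emptyset$ (since $\ell_e(P_f)\subseteq\ell_v(P_f\setminus u)$) guarantee that the intersections $E\cap\ell_v(v)$ read off along the path agree with the edge labels on $P_f$, so $\phi(E)=f$. Part~\ref{en:tree_prop_3} is then a direct count: $e_j$, $\ell_e(P_f)$, and $[n]\setminus[e_j\sqcup\ell_v(P_f)]$ are pairwise disjoint (using $\ell_e(P_f)\subseteq\ell_v(P_f)\subseteq[n]\setminus e_j$), giving $|T_f|=\binom{n-|e_j|-|\ell_v(P_f)|}{k-|e_j|-|\ell_e(P_f)|}$.

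The main obstacle I expect is the bookkeeping in the verification $\phi^{-1}(f)=T_f$: making sure that the $\ell_v(v)$ partition $\ell_v(P_f)$ disjointly, that every edge label on $P_f$ sits inside its parent's vertex label, and that these facts together force the greedy traversal of $\phi$ to reproduce exactly the labels prescribed by $f$. All are direct from the construction, but each must be cited at the correct moment so that the identifications $E\cap\ell_v(P_f)=\ell_e(P_f)$ and $\ell_e(P_f)\cap\ell_v(u)=\emptyset$ become unambiguous.
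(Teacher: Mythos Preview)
Your proposal is correct, and your treatment of part~\ref{en:tree_prop_2} is in fact more rigorous than the paper's. The paper argues~\ref{en:tree_prop_2} by an informal recursion on the depth of the tree (``first focus on the extensions of $e_j$ that avoid $s_1$; the edge with empty label gives precisely those; then the subtree below an edge labeled $s$ handles the extensions containing $s$ and avoiding $s_1\setminus s$; iterate''), whereas you make this precise by defining the traversal map $\phi:A_j\to\mathcal L$ and verifying $\phi^{-1}(f)=T_f$ via the two inclusions. The key bookkeeping facts you flag --- that the $\ell_v(v)$ for $v$ on $P_f$ partition $\ell_v(P_f)$, and that consequently $E\cap\ell_v(u_i)$ recovers exactly the edge label $s^{(i)}$ --- are what make the informal recursion rigorous, and you have them right.

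Two small remarks. First, in your termination argument the phrase ``non-root vertex $v'$ is entered via an edge'' is slightly misdirected: the edge label $s'$ sits inside $\ell_v(\text{parent of }v')$, not $\ell_v(v')$. The argument you actually need (and essentially give) is that for every internal vertex $w$ on $P$ the downward edge out of $w$ on the extended path has a non-empty label contained in $\ell_v(w)\subseteq s_w$, so $s_w$ meets $\ell_e(P)\sqcup s$; hence no $s_r$ equal to any ancestor's second coordinate can satisfy $s_r\cap[\ell_e(P)\sqcup s]=\emptyset$. This gives your depth bound $|B_j|$, which is sharper than the paper's bound of $n$ (obtained from the fact that edge labels along a path are disjoint non-empty subsets of $[n]$). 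Second, you dismiss surjectivity of the projection $V\to B_j$ as something that ``can be read off the procedure'', but it is not immediate: the paper devotes a short argument to it, showing that for each $s_i\in B_j$ there is a root-path along which $s_i$ is eventually the minimal candidate. This is a genuine (if minor) omission in your write-up of part~\ref{en:tree_prop_1}, though it plays no role in~\ref{en:tree_prop_2} or~\ref{en:tree_prop_3}.
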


\begin{proof}[Proof of Proposition~\ref{prop:tree_prop}]
For \ref{en:tree_prop_1}, we should proof that the process finishes and that every set in $B_j$ has an internal vertex in $T$ related with it (the other statements follow by construction). Indeed, by the construction, the tree clearly has a root, its edges and vertices are labelled, and has the same number of internal vertices as leafs (the empty set is always a subset).
	
Let us argue that the process finishes. Indeed, each leaf to root path can have, at most, $n$ vertices as the labels along the edges of the path are pairwise disjoint and they are all non-empty (if they are edges between internal vertices) subsets of $[n]$.

Let us argue that every set in $B_j$ is related to at least one internal vertex.
If the set is $s_1$, then it is clear as the root is related to it. Let $s_i$ be the set we are interested in. Consider $s'=s_1\setminus s_i\neq \emptyset$, then $s_i$ is a candidate to be the internal vertex at the end of the edge labeled $s'$ hanging from $s_1$. Assume that there is another candidate, smaller in the ordering, say $s_2$ which has label $s_2\setminus s_1\neq \emptyset$. At this point $s_i$ is a candidate for
the set $[s_2\setminus s_1]\setminus s_i$ which is different than the $\emptyset$ (indeed, if it would be the empty set, then $s_2\subset s_i$, which does not hold by the minimality on the sets of $B_j$). This argument would then be repeated; in general, a copy of $s_i$ can be found along the path with edges labelled $s_1\setminus s_i$, $[s_2\setminus s_1]\setminus s_i$, $[s_3\setminus [s_1\cup s_2]]\setminus s_i$, $\ldots$ and where $s_2,s_3,\ldots$ are the subsets of $B_j$ with index smaller than $i$ that are also candidates for each of the previous labels on the edges.
The argument finishes as each set $s_i\in B_j$ and the sets cannot be repeated along a path.
	
The part \ref{en:tree_prop_2} and \ref{en:tree_prop_3} follows by understanding which sets from \eqref{eq:tree_count_1} are related to a given leaf. We give a recursive argument on the depth of the tree. First we focuss on the extensions of $e_j$ that avoid the elements in $s_1\in B_j$ (and thus those are extensions of $e_j$ that are not extensions of previous edges $e_i$, $i<j$).
The edge with the label of the empty set give precisely the extensions of $e_j$ that avoid $s_1$.
Then if the edge $e$ attached to the root vertex is labelled $s$, the tree hanging from $e$ gives the extensions of $e_j$ that also contain $s$ and avoids $s_1\setminus s$, and also are not extensions of previous edges $e_i$, $i<j$ (the last condition is the reason that the set $s_r\in B_j$ related to the other end of $e$ is found); these are precisely the extensions of $e_j$ that we want to count and that are not considered when choosing the empty set as the label of an edge. 
(From a leaf-to-root path, we are considering the extensions of $e_j$ that avoids the elements in the labels of the vertices, while contains also the elements in the labels of the edges.)
This process of going deeper within the tree finishes and we end up finding all possible sets in $B_j$  (perhaps several times) due to their minimality. This tree-like structure  codifies a partition of the extensions \eqref{eq:conseq_def_bi} into sets of the type:
\[
e_j\vee \left\{\text{some fixed set of elements $F_i$}\right\} \vee \left\{\text{some choice from }\binom{n\setminus [e_j \sqcup F_i \sqcup Q_i]}{k-|e_j|-|F_i|}\right\}
\]
where the set $Q_i$ are some elements in $n$ that should be avoided to not extend also edges $e_t$, $t<j$, but have not been picked by $F_i$; the elements in $Q_i\cup F_i$ are the labels of the vertices in the leaf to root path.
(See Figure~\ref{fig.1} in Example~\ref{ex.1} for an example). 
 By Lemma~\ref{lem:d}, and the fact that these sets are pairwise disjoint, claims \ref{en:tree_prop_2} and \ref{en:tree_prop_3} follow.
\end{proof}


\begin{example}\label{ex.1}
	Consider the family of $4$-sets with $H(S)=([10],E_2\cup E_3)$ with
	\[E_2=\{(6,7),(7,8),(1,2)\}=\{e_1,e_2,e_3\}, \:
	E_3=\{(2,3,4),(5,9,10)\}=\{e_4,e_5\}\]
	The trees are depicted in Figure~\ref{fig.1}, together with the sizes of disjoint sets of edges counted. The families are as follows:
$B_1=\{\emptyset \}$, $B_2=\{(6)\}$, $B_3=\{(7),(6,8)\}$, $B_4=\{(1,7),(1,6,8)\}$,
 $B_5=\allowbreak\{(2,7),\allowbreak(1,3,7),\allowbreak(2,6,8),\allowbreak(1,3,6,8),\allowbreak(1,4,7),\allowbreak(1,4,6,8)\}$. 

	\begin{center}
				\begin{figure}[ht]
			\centering
			\fbox{
				\begin{tikzpicture}[scale=0.64,decoration={
					markings,
					mark=at position 0.5 with {\arrow{>}}}]
				\tikzstyle{vertex}=[circle,fill=black!100,minimum size=10pt,inner sep=0pt]
				
				
				
				
				%
				%
				
				\node[label=90:{$T_{(6,7)}$},label=270:{$\binom{10-2}{k-2}$}] (te1) at (-12,2) {$\emptyset$} ;
				
				\node[label=90:{$T_{(7,8)}$}] (te2) at (-10,2) {$6$} 
				child {node[label=270:{$\binom{10-2-1}{k-2}$}] {$\emptyset$} } ;
				
				\node[label=90:{$T_{(1,2)}$}] (te2) at (-7,2) {$7$} 
				child {node[label=270:{$\binom{10-2-1}{k-2}$}] {$\emptyset$} } 	
				child {	node {$6,8$} 
					child {node[label=270:{$\binom{10-2-3}{k-3}$}] {$\emptyset$} 
					}
					edge from parent node[right,red] {$7$}
				};
				
				\node[label=90:{$T_{(2,3,4)}$}] (te2) at (-3.3,2) {$7,1$}
				child {node[label=270:{$\binom{10-3-2}{k-3}$}]  {$\emptyset$}   }
				child { node {$\textcolor{blue}{1},\;6,8$} 
					child {node[label=270:{$\binom{10-3-4}{k-4}$}] {$\emptyset$}}
					edge from parent node[right,red] {$7$}
				};

				\node[label=90:{$T_{(5,9,10)}$}] (te2) at (5,2) {$7,2$}
				[level 1/.style={sibling distance=3.2cm},level 2/.style={sibling distance=1.9cm}]
				child {node[label=270:{$\binom{10-3-2}{k-3}$}] {$\emptyset$}} 
				child {node[] {$\textcolor{blue}{7},\;1,3$} 
					child {node[label=270:{$\binom{10-3-4}{k-4}$}] {$\emptyset$} }
					child {node {$\textcolor{blue}{7,1},\;4$} 	child {node[label=270:{$\binom{10-3-5}{k-5}$}] {$\emptyset$} }
						edge from parent node[right,red] {$3$}}
					edge from parent node[left,red] {$2$}}
				child {node {$\textcolor{blue}{2},\;6,8$}
					child {node[label=270:{$\binom{10-3-4}{k-4}$}] {$\emptyset$} }
					edge from parent node[left,red] {$7$}}
				child {node {$1,3,6,8$}
					child {node[label=270:{$\binom{10-3-6}{k-4}$}] {$\emptyset$} }
					child {node {$\textcolor{blue}{1,6,8},\;4$} 	child {node[label=270:{$\binom{10-3-7}{k-5}$}] {$\emptyset$} }
						edge from parent node[right,red] {$3$}} edge from parent node[right,red] {$2,7$}};
				
				\end{tikzpicture}
			}
			\caption{
				Example~\ref{ex.1}. The edges are labelled with red. The edges with no label have the label $\emptyset$. The binomial numbers correspond to the number of extensions given by their root-to-leaf path, and the $k$ indicates the size of the extended edges. The path root-to-leaf in $T_{(5,9,10)}$ given by $(7,2),(7,1,3),(7,1,4),\emptyset$ represents the extensions of $(5,9,10)$ containing also the $2$ and the $3$, but not containing $7$, $1$ or $4$. Note that the  labels on the leaf vertices are the cardinals of the partition sets. In this example, each set in $B_i$ has two vertices related to it, one of them being a leaf vertex, yet in general there may be more.}\label{fig.1}
		\end{figure}
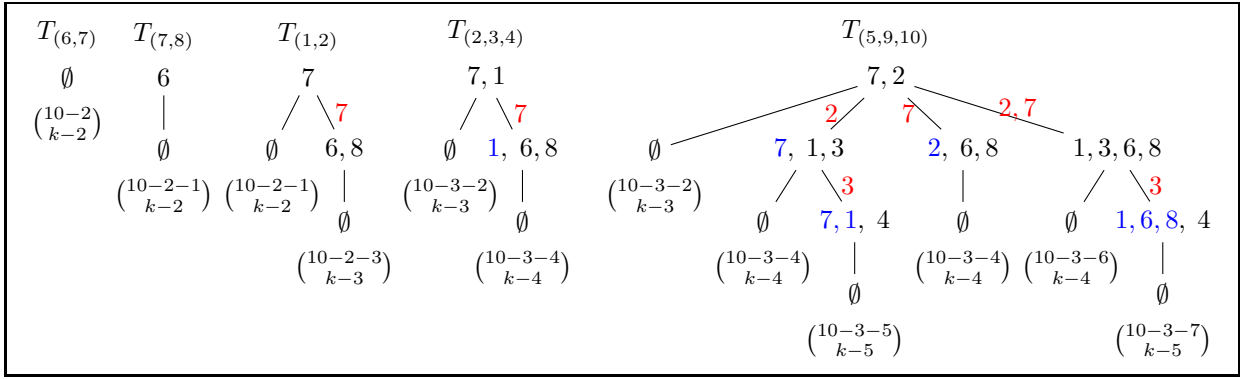
	\end{center}
\end{example}


We use Lemma~\ref{lem:hyp_shadow}, Lemma~\ref{lem:d}, and Proposition~\ref{prop:tree_prop}-\ref{en:tree_prop_2}-\ref{en:tree_prop_3} to conclude that the pairs $\{m_j,d_j\}_{j\in I}$ we were looking for in \eqref{eq:1} are precisely those from Proposition~\ref{prop:tree_prop}-\ref{en:tree_prop_3}, the reason being that, when finding $[\Delta^i(S)]^c$, we have $i$ less choices for the extensions, and thus
	\[\left|\left[ e_j \vee  \binom{[n]\setminus [e_j\sqcup \ell_v(P_f)]}{k-|e_j|-|\ell_e(P_f)|}\right]\right|=\binom{n-|e_j|-|\ell_v(P_f)|}{k-|e_j|-|\ell_e(P_f)|} \qquad \text{ turns to } \qquad \binom{n-|e_j|-|\ell_v(P_f)|}{(k-i)-|e_j|-|\ell_e(P_f)|}
\]
assuming that $|e_j|\leq k-i$ (for otherwise $e_j$ is not included in the hypergraph). Note that, even if we would consider $e_j$, whenever $|e_j|> k-i$, then $k-i-|e_j|-|\ell_e(P_f)|<0$, and thus it would not numerically affect the count towards $|\Delta^i(S)|$. Also observe that the tree may have binomial coefficients associated to leafs (or to leaf-to-root paths) that are numerically $0$ for some values of $i$, but that they should be considered as when $i$ increase, they contribute.

\subsection{Hypergraph of the initial segment in the colex order} \label{sec:hypergraph_of_colex}


Let us describe the hypergraph of $I_{n,k}(m)$ (the initial segment in the colexicographical order) of size $m=\binom{a_0}{k}+\ldots+\binom{a_t}{k-t}$ in $\binom{[n]}{k}$.
$I_{n,k}(m)$ is the family of $k$-sets:
\begin{align}
\binom{[a_0]}{k} \;\sqcup\; \{a_0+1\}\vee \binom{[a_1]}{k-1}\;\sqcup\; \cdots \sqcup \; \{a_{t-1}+1,a_{t-2}+1,\ldots,a_0+1\}\vee \binom{[a_t]}{k-t} \label{eq:colex}
\end{align}
Therefore, the vertex set is $[n]$ and the edges of the hypergraph are:
\begin{equation}\label{eq:edges_in_colex}
\begin{cases}
E_1=\{(n),(n-1),\ldots,(a_0+2)\}=\{e_1,\ldots,e_{n_1}\}\\
E_2=\{(a_0,a_0+1),(a_0-1,a_0+1),\ldots,(a_1+2,a_0+1)\}=\{e_{n_1+1},\ldots,e_{n_1+n_2}\}\\
E_3=\{(a_1,a_1+1,a_0+1),\ldots,(a_2+2,a_1+1,a_0+1)\}=\{e_{n_1+n_2+1},\ldots,e_{n_1+n_2+n_3}\}\\
E_4=\{(a_2,a_2+1,a_1+1,a_0+1),\ldots,(a_3+2,a_2+1,a_1+1,a_0+1)\}=\{e_{n_1+n_2+n_3+1},\ldots,e_{n_1+n_2+n_3+n_4}\}\\
\ldots\\
E_t=\{(a_{t-2},a_{t-2}+1,\ldots,a_0+1),\ldots,(a_{t-1}+2,a_{t-2}+1,\ldots,a_0+1)\}=\allowbreak\{e_{n_1+\ldots+n_{t-1}+1},\ldots,\allowbreak e_{n_1+\ldots+n_t}\}
\end{cases}
\end{equation}
We use the notation $H_C(n_1,\ldots,n_t)$ to denote the hypergraph of the initial segment in the colex order that has $n_i$ edges of size $i$. See $H_C(n_1,\ldots,n_t)$ in Figure~\ref{fig.2} for an example of the shape of the hypergraph; we notice that the hypergraph looks like a \emph{tree}. There are two types of vertices; those always belong exclusively to a unique edge are renamed $u_i$, and those that may belong to more edges are renamed $v_i$. Therefore, we have
{\footnotesize \begin{equation}\label{eq:vertices_of_hyp_colex}
\begin{cases}
\{n,n-1,\ldots,a_0+2 \}\; = \;\{ u_{1},\ldots,u_{n_1}\} \\ 
\{a_0,a_0-1,\ldots,a_1+2\}\;=\;\{u_{n_1+1},\ldots, u_{n_1+n_2}\} \\
\{a_1,a_1-1,\ldots,a_2+2\}\;=\;\{u_{n_1+n_2+1},\ldots, u_{n_1+n_2+n_3}\} \\
\ldots \\
\{a_{t-2},\ldots,a_{t-1}+2\}\;=\;\{u_{n_1+\cdots+n_{t-1}+1},\ldots, u_{n_1+\ldots+n_t}\} \\
\end{cases}
\text{and}\;\;\;
\{a_0+1,a_1+1,\ldots,a_{t-2}+1\}\;=\{v_1,v_2,\ldots,v_{t-1}\}\\
\end{equation}}

Note that all the edges of size $i\geq 2$ contain the vertices $\{a_{0}+1,\ldots,a_{i-2}+1\}$ (which have been renamed to $\{v_1,\ldots,v_{i-1}\}$).
If we order the edges $H_C(n_1,\ldots,n_t)$ by the subindex as in \eqref{eq:edges_in_colex}, the tree $T_{e_i}$ given in Section~\ref{sec:tree_coded} and associated to the edge $e_i$ has exactly one leaf-to-root path as $B_i=\{\;\{u_1,\ldots,u_{i-1}\}\;\}$,
 and it is associated to the binomial coefficient $\binom{n-(i-1)-|e_i|}{k-|e_i|}$. Therefore, it can be easily shown that:
\begin{equation} \label{eq:4}
|S|=\binom{n}{k}-\sum_{1\leq i\leq |E(H)|} \binom{n-(i-1)-|e_i|}{k-|e_i|}\doteq \binom{a_0}{k}+\ldots+\binom{a_t}{k-t}
\end{equation}
which, when extended to its full $k$-binomial decomposition if necessary, gives its shadow $k$-binomial decomposition. 

\begin{lemma} \label{lem:colex_B_i}
	Let $S$ be a family of sets, and $H$ its hypergraph with edges $e_1,\ldots,e_m$ ordered comfortably (so $|e_i|\leq|e_j| \iff i<j$).
	$S$ is the initial segment in the colexicographical order if and only if, for every edge $e_i\in H$, $|B_i|=1=\{s_i\}$ and $|s_i|=i-1$. ($B_i$ are defined by \eqref{eq:def_bi}.)
\end{lemma}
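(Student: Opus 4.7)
My plan is to argue the two implications separately: the forward direction is a direct verification using the explicit description of the colex hypergraph $H_C(n_1,\ldots,n_t)$ from Section~\ref{sec:hypergraph_of_colex}, while the reverse direction is proved by induction on the number $m$ of edges of $H$.

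For the forward direction, assume $S$ is the initial colex segment with hypergraph $H_C(n_1,\ldots,n_t)$, and let $\{u_j\}$ and $\{v_\ell\}$ denote respectively the \emph{private} and \emph{shared} vertices from \eqref{eq:vertices_of_hyp_colex}: each $u_j$ belongs to exactly one edge, while $v_\ell$ belongs to every edge of size $\geq \ell+1$. Fix $e_i$ with $|e_i|=\ell$. By comfortable ordering, every $j<i$ satisfies $|e_j|\leq \ell$, so the shared vertices of $e_j$ form a subset of $\{v_1,\ldots,v_{\ell-1}\}\subseteq e_i$. Therefore $e_j\setminus e_i=\{u_j\}$ is a singleton, and any set in $B_i$ must contain each $u_j$ with $j<i$. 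Minimality then yields $B_i=\{\{u_1,\ldots,u_{i-1}\}\}$ with $|s_i|=i-1$.

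For the reverse direction, I would induct on $m$. The cases $m\leq 1$ follow directly: after relabeling $e_1$ to sit at the top of the vertex order, the single-edge hypergraph $\{e_1\}$ corresponds to the initial colex segment $\binom{[n]}{k}\setminus(\{e_1\}\vee\binom{[n]\setminus e_1}{k-|e_1|})$. For $m\geq 2$, remove the last edge $e_m$; the hypergraph $H'=H\setminus\{e_m\}$ still satisfies the assumption for every index $<m$, so by the inductive hypothesis $H'\cong H_C(n_1,\ldots,n_t)$ for suitable parameters, with private vertices $u_1,\ldots,u_{m-1}$ and shared vertices $v_1,\ldots,v_{t-1}$. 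Now the conditions $|B_m|=1$ and $|s_m|=m-1$ force the following: since $s_m$ consists of $m-1$ essential vertices dominating the $m-1$ previous edges, the bipartite incidence between $s_m$ and $\{e_1,\ldots,e_{m-1}\}$ admits a matching in which each vertex of $s_m$ is privately essential for a unique edge; uniqueness of $s_m$ then rules out using any shared $v_\ell$ (which would dominate multiple edges simultaneously and produce an alternative minimal dominating set). Hence $s_m=\{u_1,\ldots,u_{m-1}\}$, so $e_m\cap\{u_1,\ldots,u_{m-1}\}=\emptyset$ and $e_m$ is built out of $\{v_1,\ldots,v_{t-1}\}$ together with vertices outside the support of $H'$. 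A symmetric application of the uniqueness condition further forces $e_m\supseteq\{v_1,\ldots,v_{|e_m|-1}\}$ (otherwise swapping a block of private vertices $\{u_j\}$ for a missing $v_\ell\notin e_m$ would yield a second element of $B_m$), placing $e_m$ precisely in the next slot of the colex extension of $H_C(n_1,\ldots,n_t)$.

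The main obstacle will be executing the uniqueness step cleanly inside the induction: one has to show that if $e_m$ fails to contain some shared vertex $v_\ell$ that the colex structure prescribes, an explicit second minimal dominating set avoiding $e_m$ can be exhibited, contradicting $|B_m|=1$. The right bookkeeping tool is the nested shared-vertex hierarchy inherited from the inductive hypothesis, which controls exactly which blocks of private vertices $\{u_j\}$ are collectively replaceable by the single shared vertex $v_\ell$.
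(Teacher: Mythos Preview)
Your approach is correct but takes a genuinely different route from the paper's for the reverse implication. You argue by induction on the number of edges, peeling off $e_m$ and using the two hypotheses $|B_m|=1$ and $|s_m|=m-1$ to pin down the position of $e_m$ relative to the colex structure $H'$ built inductively. This works: the condition $|s_m|=m-1$ alone (with minimality) already forces every vertex of $s_m$ to have degree~$1$ in~$H'$ (your matching argument yields that each vertex has a private edge, and counting gives that no vertex can hit a second edge), so $s_m$ consists of one private vertex per previous edge; then for each shared vertex $v_\ell\notin e_m$ of degree~$\ge 2$ the set $\{v_\ell\}\cup\{u_j:v_\ell\notin e_j\}$ avoids $e_m$ and has size $<m-1$, contradicting the hypothesis; and if $e_{m-1}$ has several degree-$1$ vertices then uniqueness of $B_m$ forces $e_m$ to cover all but one of them. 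One small imprecision: you attribute the exclusion of shared $v_\ell$ from $s_m$ to ``uniqueness of $s_m$'', but in fact it already follows from $|s_m|=m-1$ plus minimality; uniqueness is only genuinely needed in the degree-$1$ swap step.

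The paper instead proves the contrapositive: assuming $S$ is not colex, it introduces the notion of \emph{supercomfortable ordering}, identifies the index $t$ where the colex structure first breaks, and establishes Claim~\ref{cl:edges_in_non_colex}, a dichotomy (\ref{exA} versus \ref{exB} with subcases) describing how $e_{t+1}$ can interact with $\{e_1,\ldots,e_t\}$. From this dichotomy the paper reads off that either $|B_{t+1}|\ge 2$ or $B_{t+1}$ contains a set of size~$<t$. Your inductive argument is cleaner and more self-contained for the lemma as stated; the paper's approach is more laborious here but is designed to produce Claim~\ref{cl:edges_in_non_colex} as a reusable structural tool---it is invoked again in the alternative proof of Theorem~\ref{thm:card}, in Proposition~\ref{p.shad_hypo}, and to motivate Constructions~$A$ and~$B$ in Section~\ref{sec:cons_a_b}.
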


\begin{proof}[Proof of Lemma~\ref{lem:colex_B_i}]
	From left to right holds as the hypergraph of the initial segment has such properties. 
	
	From right to left, we show the contrapositive. For that, we give some more detail on the situation on what happens when the hypergraph stop being a hypergraph of an initial segment in the colex order when we add the edges one by one; this case analysis is useful later on. 
	The set of edges $\{e_1,\ldots,e_m\}$ ordered by its index, is said to be in a \emph{supercomfortable ordering} if
	\begin{itemize}
		\item $|e_i|\leq |e_j|$ if and only if $i\leq j$, (comfortable ordering) and
		\item the index $t\geq 1$ such that, 
		\begin{quote}for each $j$, $1\leq j\leq t$, $\{e_1,\ldots,e_j\}$ are the edges of a colex,\footnote{The hypergraph induced by the edges is isomorphic to a hypergraph of an initial segment in the colex order.} and either $e_{t+1}$ does not exists, or $\{e_1,\ldots,e_{t+1}\}$ is not the hypergraph of a colex.
		\end{quote} is maximum (when all the possible orderings with the first condition are considered).
	\end{itemize}
 Let $\text{sc}( \{e_1,\ldots,e_m\})$ denote such parameter $t$ from the second condition.

Any subhypergraph of the hypergraph of an initial segment in the colex order family is the hypergraph of an initial segment in the colex order of the corresponding size (or isomorphic to one). In particular, the hypergraph of the initial segment in the colex order has a
	supercomfortable ordering with $t=m$ (actually, any ordering with $|e_i|\leq |e_j|$ if and only if $i\leq j$, is supercomfortable in this case).
	Furthermore, if $S$ is not the initial segment in the colex order, then its hypergraph $H$ is not the hypergraph of the initial segment in the colex order; in particular, there is a minimum $i$, $2\leq i\leq k$ for which the subhypergraph $H^{(i)}$ is not the hypergraph of an initial segment in the colex order; this means that, if $\{e_1,\ldots,e_m\}$ are the edges of $H$, there is a supercomfortable ordering for which $\text{sc}( \{e_1,\ldots,e_m\})=t<m$. Thus we conclude
	\begin{quote}
		$S$ is a family with hypergraph with edges $\{e_1,\ldots,e_m\}$. Then, there is a supercomfortable ordering of  $\{e_1,\ldots,e_m\}$ with $\text{sc}( \{e_1,\ldots,e_m\})=m$ if and only if $S$ is the initial segment in the colex order.
	\end{quote}
	
\begin{claim}\label{cl:edges_in_non_colex}
	Let $S\subset \binom{[n]}{k}$ be not isomorphic to the initial segment in the colex order, with the edges in its hypergraph $\{e_1,\ldots,e_m\}$ assumed to be in a supercomfortable ordering. Let $t=\text{sc}( \{e_1,\ldots,e_m\})< m$.
	Let $\{u_1,\ldots,u_t\}$ and $\{v_1,\ldots,v_{|e_t|-1}\}$ be the vertices as denoted in \eqref{eq:vertices_of_hyp_colex} for the subgraph induced by $\{e_1,\ldots,e_t\}$.\footnote{Note that, if $e_t$ is the unique edge of its size, then there are different choices for $u_t$ and $v_{|e_{t-1}|-1},v_{|e_{t-1}|},\ldots,v_{|e_t|-1}$, all vertices of degree $1$ in $\{e_1,\ldots,e_t\}$.}
	 Then either:
 \begin{enumerate}[label*=(\Alph*)]
	\item \label{exA} There exists an edge $e_i$ with $|e_i|=|e_{t+1}|$ and $i\geq t+1$ and an edge $e\in \{e_1,\ldots,e_t\} $ with the property that $e$ has all its vertices covered by at least two edges from among $\{e_1,\ldots,e_t,e_{i}\}$.
\end{enumerate}
or, alternatively, 
\begin{enumerate}[label*=(\Alph*),resume]
	\item \label{exB} For each edge $e_i$ with $|e_i|=|e_{t+1}|$, there is a set of $t+1$ vertices which is uniquely covered by $\{e_s\; :\; s\in [1,t]\cup \{i\}\}$ (equivalently, for each $e_j, 1\leq j\leq t$, there is  a vertex of degree $1$ in the hypergraph induced by $\{e_1,\ldots,e_t\}$ covered by $e_j$ but not by $e_i$). Then there are some cases:
	\begin{enumerate}[label*=.\roman*]
		\item \label{exb.1} If $e_t$ is not the unique edge of size $|e_t|$ in $\{e_1,\ldots,e_t\}$, then
			$e_i$ does not cover a vertex from among $\{v_1,\ldots,v_{|e_t|-1}\}$.
		\item \label{exb.2}If $e_t$ is the unique edge of size $|e_t|$ in $\{e_1,\ldots,e_t\}$, then either
		\begin{enumerate}[label*=.\roman*]
		 \item \label{exb.2.1} $e_i$ does not cover a vertex from among $\{v_1,\ldots,v_{|e_{t-1}|-1}\}$, or, if that is not the case
		 \item \label{exb.2.2} $e_i$ does not cover at least two vertices from among $\{u_t,v_{|e_{t-1}|},v_{|e_{t-1}|+1},\ldots,v_{|e_{t}|-1}\}$.
		 \end{enumerate}
	\end{enumerate}
\end{enumerate}
\end{claim}
	
\begin{proof}[Proof of Claim~\ref{cl:edges_in_non_colex}]
Since $\{e_1,\ldots,e_{t}\}$ is in supercomfortable order and they induce a hypergraph of the initial segment in the colex order, it is supported (assuming the notation from \eqref{eq:vertices_of_hyp_colex}) on vertices 
$\{u_1,\ldots,u_t\}$ and $\{v_1,\ldots,v_{|e_t|-1}\}$ where $e_i$, $i\in\{1,\ldots,t\}$, is the unique edge from among $\{e_1,\ldots,e_{t}\}$ containing $u_i$, and $v_{j}$ is contained by all the edges of size $\geq j-1$.

Assume now that $\{e_1,\ldots,e_t,e_{t+1}\}$ would have been the hypergraph of the initial segment in the colex order, then the vertices of $e_{t+1}$ would have been $\{v_1,\ldots,v_{|e_t|-1}\}$ and $|e_{t+1}|-|e_t|+1$ other vertices, different from $\{u_1,\ldots,u_t\}$.
Let us thus negate such situation for each edge of size $|e_{t+1}|$.
There is some case analysis:

\medskip
\noindent\emph{Case 1: $e_t$ is the unique edge of its size from among $\{e_1,\ldots,e_t\}$.}
 If that is the case, then the all the vertices from $\{u_t,v_{|e_{t-1}|},v_{|e_{t-1}|+1},\ldots,v_{|e_{t}|-1}\}$ are covered (in the hypergraph of $\{e_1,\ldots,e_t\}$) exclusively by $e_t$, and thus their roles are interchangable. Now there is a case analysis according to the situation of the rest of the edges $e_i$ with $|e_i|=|e_{t+1}|$.

\medskip
\noindent\emph{Case 1.1: some edge $e_i$ with $|e_i|=|e_{t+1}|$ covers a vertex $\{u_1,\ldots,u_{t-1}\}$.} Say it covers $u_j$, then we are in situation \ref{exA}, as all the vertices of the edge $e_j$ have degree $\geq 2$.

\medskip
\noindent\emph{Case 1.2: none of the edges $e_i$ with $|e_i|=|e_{t+1}|$ covers a vertex $\{u_1,\ldots,u_{t-1}\}$.} This requires a further case analysis.

\medskip
\noindent\emph{Case 1.2.1: some edge $e_i$ with $|e_i|=|e_{t+1}|$ covers all the vertices $\{u_t,v_{|e_{t-1}|},v_{|e_{t-1}|+1},\ldots,v_{|e_{t}|-1}\}$.} Then we are in case \ref{exA}, and all the vertices of $e_t$ have degree $\geq 2$.

\medskip
\noindent\emph{Case 1.2.2: no edge $e_i$ with $|e_i|=|e_{t+1}|$ covers all the vertices $\{u_t,v_{|e_{t-1}|},v_{|e_{t-1}|+1},\ldots,v_{|e_{t}|-1}\}$.}
Then each edge $e_i$ with $|e_i|=|e_{t+1}|$ has at least one vertex of degree $1$ in the hypergraph of $\{e_1,\ldots,e_t,e_{i}\}$, by examining its size (it can only cover at most the vertices $\{v_1,\ldots,v_{|e_{t-1}|-1}\}$, and no vertex from among $\{u_1,\ldots,u_{t-1}\}$ while also having size $\geq |e_t|$ and not covering all the vertices of $\{u_t,v_{|e_{t-1}|},v_{|e_{t-1}|+1},\ldots,v_{|e_{t}|-1}\}$.
In this escenario, it can happen that $e_i$ does not cover all the vertices from among $\{v_1,\ldots,v_{|e_{t-1}|-1}\}$, then we are in case \ref{exb.2.1}.
the other possibility is that it covers all the vertices from among $\{v_1,\ldots,v_{|e_{t-1}|-1}\}$, then it should leave two vertices of $e_t$ free, for otherwise $e_i$ would make $\{e_1,\ldots,e_t,e_i\}$ be the hypergraph of the initial segment in the colex order. Hence we are in the situation \ref{exb.2.2}.

\medskip
\noindent\emph{Case 2: $e_t$ is not the unique edge of its size from among $\{e_1,\ldots,e_t\}$.}
 If that is the case, then the only vertices of degree one in $\{e_1,\ldots,e_t\}$ are $\{u_1,\ldots,u_t\}$, and all the vertices $\{v_1,\ldots,v_{|e_{t}|+1}\}$ have degree $\geq 2$. Again, there is a case analysis according to the situation of the rest of the edges $e_i$ with $|e_i|=|e_{t+1}|$.

\medskip
\noindent\emph{Case 2.1: some edge $e_i$ with $|e_i|=|e_{t+1}|$ covers a vertex $\{u_1,\ldots,u_{t}\}$.} Say it covers $u_j$, then we are in situation \ref{exA}, as all the vertices of the edge $e_j$ have degree $\geq 2$.

\medskip
\noindent\emph{Case 2.2: none of the edges $e_i$ with $|e_i|=|e_{t+1}|$ covers a vertex $\{u_1,\ldots,u_{t}\}$.} Then we are in a case similar to 1.2.2.
Each $e_i$ with $|e_i|=|e_{t+1}|$ has at least one vertex of degree $1$ in the hypergraph of $\{e_1,\ldots,e_t,e_{i}\}$, by examining its size (it can only cover at most the vertices $\{v_1,\ldots,v_{|e_{t}|-1}\}$, and no vertex from among $\{u_1,\ldots,u_{t}\}$ while also having size $\geq |e_t|$ by the first condition of the supercomfortable ordering, and not covering all the vertices of any edge, by the definition of the hypergraph of a family of sets.
Now, if $e_i$ covers all the vertices from among $\{v_1,\ldots,v_{|e_{t}|-1}\}$, then $\{e_1,\ldots,e_t,e_i\}$ would be a supercomfortable ordering of a hypergraph of the initial segment in the colex order (as $e_i$ would not cover any of the vertices $\{u_1,\ldots,u_t\}$ for otherwise it would not be the hypergraph of the family as the edge $e_i$ would contain another edge, as it is at least as large as $e_t$, which is the largest from among $\{e_1,\ldots,e_t\}$), thus contradicting the assumption that $t=\text{sc}( \{e_1,\ldots,e_m\})$.
Thus, there is a vertex from among $\{v_1,\ldots,v_{|e_{t}|-1}\}$ not in $e_i$, and we are in case \ref{exb.1}.
\end{proof}

Let us now resume the argument from right to left. We are going to show the contrapositive, so assume that $S$ is not the initial segment in the colex order, and let $t=\text{sc}( \{e_1,\ldots,e_m\})<m$. We use Claim~\ref{cl:edges_in_non_colex}. If we are in case \ref{exA} we may assume that the given $i$ is $t+1$ without breaking the supercomfortable ordering.
We claim that there is a set in $B_{t+1}$ of size $<t$; indeed, consider a dominating set of $\{e_1,\ldots,e_t\}$ of vertices not belonging to $e_{t+1}$, obtained greedily by first dominating $e_j$, the edge all whose degrees are $\geq 2$. In particular, this vertex $v$ covers at least two edges from among $\{e_1,\ldots,e_t\}$; by greedily adding vertices to $\{v\}$ into a full dominating set $s$, covering at least one new edge with each new vertex, we have $|s|<t$; thus a minimal dominating set contained in $s$ also have $<t$ vertices, and thus the claim is proven.

Now, for all the cases \ref{exB} we do not modify the supercomfortable ordering and just examine $B_{t+1}$.
For the case \ref{exb.1}, following the arguments of the claim, there is a vertex $v_j$ from among $\{v_1,\ldots,v_{|e_{t}|-1}\}$ not contained in $e_{t+1}$. From the hypothesis on $e_t$ (not being its unique edge of its size in $\{e_1,\ldots,e_t\}$), we conclude that the degree of $v_j$ is $\geq 2$, then any minimal set dominating $\{e_1,\ldots,e_t\}$ with vertices not in $e_{t+1}$, and containing $v_j$ has size $<t$ (by the same argument as before).
The argument for \ref{exb.2.1} is analogous as the one for \ref{exb.1} just given.
For the case \ref{exb.2.2} that is not covered by \ref{exb.2.1}, let $w_1,w_2$ be two vertices from among $\{u_t,v_{|e_{t-1}|},v_{|e_{t-1}|+1},\ldots,v_{|e_{t}|-1}\}$ not in $e_{t+1}$. Then
$\{u_1,\ldots,u_{t-1},w_1\}$ and $\{u_1,\ldots,u_{t-1},w_2\}$ are two dominating sets belonging to $B_{t+1}$, as they are both minimal.
With this exhaustive case analysis, we conclude the proof of the right to left implication, and of the lemma.
\end{proof}


\section{Hypotenusal process}\label{sec:hypotenusal}

We begin this section by introducing the hypothenusal numbers, then in Section~\ref{sec:bin_balls_wall} we introduce a procedure from where the hypothenusal numbers can be extracted and that allows to interpret the relation between the shadow $k$-binomial decompositions with the trees $T_{e_i}$. 

\subsection{Hypotenusal numbers} \label{sec:def_of_hyp_num}

The sequence of \emph{hypotenusal numbers} $a[i]$ are defined  as the difference between two consecutive \emph{Hamilton's numbers} $h[i]$:
\begin{equation} \label{eq:hypotenusal}
a[n]:=\begin{cases}
1 & n=0 \\
h[n+1]-h[n] & n>0
\end{cases}
,\;\; \text{ with }
\;\;
h[n]:=\begin{cases}
2 & n=1 \\
2+\sum_{i=1}^{n-1}\frac{\left[ (-1)^{i+1} \prod_{k=0}^i \left(h[n-i]-k\right)\right]}{(i+1)!} & n>1
\end{cases}
\end{equation}
Has the following first elements:
\begin{align}
a[0]&=1 \nonumber\\
a[1]&=1 \nonumber\\
a[2]&=2 \nonumber\\
a[3]&=6 \nonumber\\ 
a[4]&=36 \nonumber\\
a[5]&=876 \nonumber\\
a[6]&=408696 \nonumber\\
a[7]&=83762796636 \nonumber\\
a[8]&=3508125906207095591916 \label{eq:hypot_numbers}\\
a[9]&=6153473687096578758445014683368786661634996 \nonumber \\
a[10]&=18932619208894981833333582059033329370801260096062214926751788496235698477988081702676  \nonumber 
\end{align}
This sequence can be found as \cite[A001660]{oeis}.
The name ``hypotenusal numbers'' comes from the fact that they lie in the hypotenuse of the triangle found in \cite[Page~496]{lucas1891theorie} and in \cite{sylvester1887x} and reproduced in Table~\ref{t.1}.
{\footnotesize \begin{table}[htb]
	\centering
	\begin{tabular}{rrrrrrrr}
		\hline
		\textbf{1}& 0 & 0 & 0 &0 &0 &0 &\ldots \\
		\hline
		&\textbf{1}& 1& 1 &1 &1 &1 &\ldots \\
		\hline
		&& \textbf{2}& 3& 4 &5 &6 &\ldots\\
		&&1 &5& 9& 14& 20& \ldots \\
		\hline
		&&&  \textbf{6}& 15& 29 &49 &\ldots \\
		&&&5& 21& 50& 99& \ldots \\
		&&& 4 &26& 76& 175 &\ldots \\
		&&&  3 &30& 106& 281& \ldots \\
		&&&  2 &33& 139& 420& \ldots \\ 
		&&&  1 &35& 174& 594 &\ldots \\
		\hline
		&&&& \textbf{36}& 210& 804 &\ldots \\
		&&&&  35& 246& 1050& \ldots \\
		&&&&	  34 &281& 1331& \ldots \\
		&&&&	  33& 315& 1646 &\ldots \\
		&&&&\ldots & \ldots & \ldots &\ldots\\
		\hline
	\end{tabular}
	\caption{The hypotenusal numbers are those shown in boldface in the table. The Hamilton's numbers are found as $h[n+1]=1+\sum_{i=0}^{n} a[i]$, both sequences are given in \cite[Page~496]{lucas1891theorie}.  The number in the table at row $j$ column $i$ is found by adding all the numbers with row $j-1$ and column up to $i$, included (the numbers not appearing are set to $0$), with the exception of the first nonzero elements of the row; these are obtained either as said before if $(j-1,i-1)$-th element is non-zero, or by substracting one from the element above $(j-1,i)$.}\label{t.1}
\end{table}}
By inspection, one can guess that the sequence grows as a double exponential (since the number of digits is roughly doubled after each iteration in \eqref{eq:hypot_numbers}). In Section~\ref{sec:growth_hypotenusal} we show that this is indeed the case; however, the arguments become more visual after the introduction of the hypothenusal process in the following section.

\subsection{Bins, balls, and a wall.} \label{sec:bin_balls_wall}

The triple $(\mathcal{B}_t,w,t)$ formed by the triple
\[
\text{(multiset of \emph{balls} at iteration $t$, position $w$ of the \emph{wall}, \emph{iteration counter} $t$)}\]
 where
\begin{itemize}
	\item the \emph{iteration counter} $t\geq 0$, is an integer.
	\item the multiset is indexed by the integers, each integer corresponding to a \emph{bin} located and containing some \emph{balls}.
	\item Each ball is a pair of integers $(j,s)$: $s$ being a \emph{delay counter}  (it indicates that the ball should not be \emph{processed} before the iteration counter $s$), and $j$ being the \emph{position} of the ball (the bin where the ball is located).
	\item $w$ is an integer and is identified with the \emph{wall} which is located at position $w-1/2$.
\end{itemize}
is said to be a \emph{bin-balls-wall configuration}.

\paragraph{The hyptenusal process.}
Given a bin-balls-wall configuration $(\mathcal{B}_0,w_0,0)$, we can uniquely find a sequence of bin-balls-walls configurations indexed by the iteration counter, $\{(\mathcal{B}_t,\allowbreak w,t)\}_{t\in \N}$,  which is created using the followingly described \emph{hypotenusal process}.

At the beginning of each iteration step $t$ (the first iteration being $t=0$), we initialize a set of to-be-processed balls $D\subseteq \mathcal{B}_t$ as all the balls with delay counter $s\leq t$. All the balls with delay counters $s>t$ are placed in the new set $\mathcal{B}_{t+1}$.  Let $w$ be the position of the wall. We say that the ball $b=(j,s)$ in $D$ can be \emph{legally processed} if it is to the left of the wall or right at the wall: so its position $j$ satisfies $j\leq w$.

If the set of balls to be processed is not empty and none of them can be legally processed, then we say that the \emph{process terminates abruptly}. 
The legally processable ball $b=(j,s)\in D$ is processed as follows.
\begin{itemize}
	\item Add balls $\{(j,s+1),\ldots,(w-2,s+1),(w-1,s+1)\}$ at the positions $j,j+1,\ldots,w-2,w-1$,  (thus adding no ball if $j=w$) with the delay mark $s+1$, and were $w$ is the current position of the wall.
	 These new balls are added to $D$ whenever $s+1\leq t$, and, when $s+1>t$, they are added to $\mathcal{B}_{t+1}$.
	\item  remove $b=(j,s)$ from $D$. 
	\item move the wall from the position $w$ to the position $(w+1)$.
\end{itemize}
After all the balls from $D$ are processed, we increment the iteration counter $t\leftarrow t+1$ and repeat the process with $(\mathcal{B}_t,w,t)\leftarrow (\mathcal{B}_{t+1},w',t+1)$, where $w'$ is the last position of the wall obtained during the iteration $t$.

The sequence $\{(\mathcal{B}_t,w,t)\}_{t\in \N}$ obtained from the initial condition $(\mathcal{B}_0,w_0,0)$ by this procedure is said to be the \emph{output of the hypotenusal process with initial condition $(\mathcal{B}_0,w_0,0)$}.

The pair $(\mathcal{B}_0,w,0)$ formed by an initial configuration of balls $\mathcal{B}_0$, with finitely many balls in each bin, together with an initial wall position $w$ is said to be \emph{good} if the process described above never terminates abruptly.
From each good initial configuration $(\mathcal{B}_0,w,0)$, we obtain the sequence $w_0,w_1,\ldots,w_n,\ldots$, where $w_j$ is the position of the wall at the beginning of the iteration step $j$  (so $w_0=w$).
This process is dubbed as the \emph{hypotenusal process} since the sequence of the hypotenusal numbers can be retrieved from a certain initial condition (see Lemma~\ref{lem:hip_num}); however, other initial conditions give rise to other sequences, thus generalizing this sequence of hypotenusal numbers. The name bins-balls-wall comes from imagining the process as a set of balls that push a wall to the right, while leaving the trace of their trajectory as balls to be later reused. See Figure~\ref{fig:hyp_example} for an example.

{\small
	\begin{figure}[htb]
		\centering 
		\begin{tikzpicture}
		\node (a) at (0,0)
		{
			\begin{tikzpicture}[scale=0.5]
			\foreach \i in {0,...,11}
			\draw[lightgray] (\i,0)--(\i,1);
			\draw[lightgray] (0,0)--(11,0);
			\draw[color=red] (0.5,0.3) circle(3pt);
			\draw[color=blue] (0.5,0.6) circle(3pt);
			\draw[thick] (0,-0.5)-- ( 0,1.5);
			\end{tikzpicture}
		};
		\node (c) at (a.south) [anchor=north,yshift=-0.2cm]
		{
			\begin{tikzpicture}[scale=0.5]
			\foreach \i in {0,...,11}
			\draw[lightgray] (\i,0)--(\i,1);
			\draw[lightgray] (0,0)--(11,0);
			\draw[color=blue] (0.5,0.6) circle(3pt);
			\draw[thick] (1,-0.5)-- ( 1,1.5);
			\end{tikzpicture}
		};
		
		\node (e) at (c.south) [anchor=north,yshift=-0.2cm]
		{
			\begin{tikzpicture}[scale=0.5]
			\foreach \i in {0,...,11}
			\draw[lightgray] (\i,0)--(\i,1);
			\draw[lightgray] (0,0)--(11,0);
			\draw[color=blue] (0.5,0.6) circle(3pt);
			\draw[thick] (2,-0.5)-- ( 2,1.5);
			\end{tikzpicture}
			
		};
		
		\node (e1) at (e.south) [anchor=north,yshift=-0.2cm]
		{
			\begin{tikzpicture}[scale=0.5]
			\foreach \i in {0,...,11}
			\draw[lightgray] (\i,0)--(\i,1);
			\draw[lightgray] (0,0)--(11,0);
			\draw[color=blue] (0.5,0.6) circle(3pt);
			\draw[color=blue] (1.5,0.6) circle(3pt);
			\draw[thick] (3,-0.5)-- ( 3,1.5);
			\end{tikzpicture}
		};
		\node (f) at (e1.south) [anchor=north,yshift=-0.2cm]
		{
			\begin{tikzpicture}[scale=0.5]
			\foreach \i in {0,...,11}
			\draw[lightgray] (\i,0)--(\i,1);
			\draw[lightgray] (0,0)--(11,0);
			\draw[color=red] (0.5,0.6) circle(3pt);
			\draw[color=blue] (1.5,0.9) circle(3pt);
			\draw[thick] (3,-0.5)-- ( 3,1.5);
			\end{tikzpicture}
		};
		
		\node (g) at (f.south) [anchor=north,yshift=-0.2cm]
		{
			\begin{tikzpicture}[scale=0.5]
			\foreach \i in {0,...,11}
			\draw[lightgray] (\i,0)--(\i,1);
			\draw[lightgray] (0,0)--(11,0);
			\draw[color=red] (0.5,0.6) circle(3pt);
			\draw[color=red] (1.5,0.6) circle(3pt);
			\draw[color=red] (2.5,0.6) circle(3pt);
			\draw[color=blue] (1.5,0.9) circle(3pt);
			\draw[thick] (4,-0.5)-- ( 4,1.5);
			\end{tikzpicture}
		};
		
		\node (h) at (g.south) [anchor=north,yshift=-0.2cm]
		{
			
			\begin{tikzpicture}[scale=0.5]
			\foreach \i in {0,...,11}
			\draw[lightgray] (\i,0)--(\i,1);
			\draw[lightgray] (0,0)--(11,0);
			\draw[color=red] (0.5,0.6) circle(3pt);
			\draw[color=red] (1.5,0.6) circle(3pt);
			\draw[color=red] (2.5,0.6) circle(3pt);
			\draw[color=blue] (1.5,0.9) circle(3pt);
			\draw[color=blue] (2.5,0.9) circle(3pt);
			\draw[color=blue] (3.5,0.9) circle(3pt);
			\draw[thick] (5,-0.5)-- ( 5,1.5);
			\end{tikzpicture}
			
		};

		\node (i) at (h.south) [anchor=north,yshift=-0.2cm]
		{
			\begin{tikzpicture}[scale=0.5]
			\foreach \i in {0,...,11}
			\draw[lightgray] (\i,0)--(\i,1.8);
			\draw[lightgray] (0,0)--(11,0);
			\foreach \i in {0,...,4}
			\draw[color=red] (\i+0.5,0.3) circle(3pt);
			\foreach \i in {1,...,5}
			\draw[color=blue] (\i+0.5,0.6) circle(3pt);
			\foreach \i in {1,...,6}
			\draw[color=magenta] (\i+0.5,0.9) circle(3pt);
			\foreach \i in {2,...,7}
			\draw[color=violet] (\i+0.5,1.2) circle(3pt);
			\foreach \i in {2,...,8}
			\draw[color=olive] (\i+0.5,1.5) circle(3pt);
			\foreach \i in {3,...,9}
			\draw[color=brown] (\i+0.5,1.8) circle(3pt);

			\draw[thick] (11,-0.5)-- ( 11,2.5);
			
			%
			%
			\end{tikzpicture}
		};
		\draw [->] (a)--(c);
		\draw [->] (c)--(e);
		\draw [->] (e)--(e1);
		\draw [->] (e1)--(f);
		\draw [->] (f)--(g);
		\draw [->] (g)--(h);
		\draw [->] (h)--(i);
	\end{tikzpicture}
	\caption{Hypotenusal process for $(\{(0,0),(0,0)\},0,0)$; first picture is the initial configuration, second picture is after the first ball of step 0 processed. Third picture corresponds to the configuration after the two balls in the first configuration have been processed (end of iteration $0$). Fourth picture represents the initial position at end of iteration $1$, then the first and second ball of iteration $2$. Final picture corresponds to the end of iteration $3$.} \label{fig:hyp_example}
\end{figure}
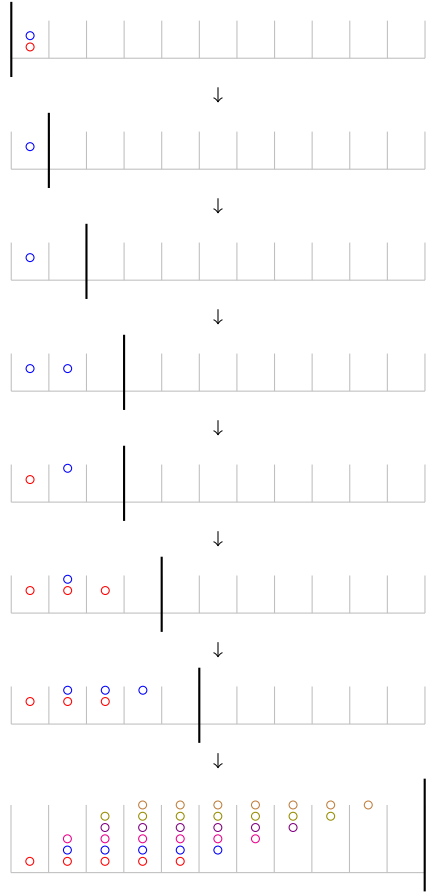}

\subsection{Hypothenusal process, hypothenusal numbers and families of \texorpdfstring{$k$}{k}-sets}

The following Observation~\ref{obs:dict_bbw-sbd} justify the consideration of the hypothenusal process in this work, while Lemma~\ref{lem:hip_num} justify its name. The proof of the first observation is given within the proof of Proposition~\ref{prop.translation}.

\begin{observation} \label{obs:dict_bbw-sbd}
The family $S\subseteq \binom{[n]}{k}$ with hypergraph $H=([n],\{e_1,\ldots,e_m\})$, produces an initial set of balls-bins-wall configuration $(\mathcal{B}_0,w,0)$ as follows.
If a non-root leaf vertex of the tree $T_{e_i}$ associated to the edge $e_i$ gives a coefficient $\binom{n-j-s}{k-s}$, then we place a ball in the position $j$ and with delay $s$. The wall is initially located at position $w=0$.
We use $(\mathcal{B}(S),0,0)$ to denote this initial configuration.
\end{observation}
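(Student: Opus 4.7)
The observation packages a bookkeeping dictionary: every non-root leaf of every tree $T_{e_i}$ becomes a ball placed in a prescribed bin with a prescribed delay counter, and the initial wall sits at position $0$. The plan is to verify directly that this assignment gives a legitimate bin-balls-wall configuration (well-definedness of positions and delays, and finiteness), and to note that the substantive goodness claim---that the hypotenusal process never terminates abruptly on this input---is deferred to the proof of Proposition~\ref{prop.translation}, as the paper announces.

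Concretely, Proposition~\ref{prop:tree_prop}\ref{en:tree_prop_3} states that each non-root leaf $f$ of $T_{e_i}$ contributes the binomial coefficient
$$\binom{n-|e_i|-|\ell_v(P_f)|}{k-|e_i|-|\ell_e(P_f)|}.$$
Matching against the dictionary template $\binom{n-j-s}{k-s}$ forces the identification
$$s := |e_i| + |\ell_e(P_f)|, \qquad j := |\ell_v(P_f)| - |\ell_e(P_f)|.$$
The containment $\ell_e(P_f)\subseteq \ell_v(P_f)$ from Proposition~\ref{prop:tree_prop}\ref{en:tree_prop_2} gives $j\ge 0$, and $s\ge |e_i|\ge 1$ is immediate from the definition of $H(S)$. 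Hence each leaf unambiguously produces a single ball $(j,s)$ which is placed in bin $j$. Since $H(S)$ has finitely many edges, and each $T_{e_i}$ is finite (by the construction in Section~\ref{sec:tree_coded}, the edge labels along any root-to-leaf path are pairwise disjoint non-empty subsets of $[n]$, bounding the depth by $n$, and each internal vertex has at most $2^n$ children), the multiset $\mathcal{B}(S)$ is finite, which together with the wall position $w=0$ yields a legitimate bin-balls-wall configuration.

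The remaining content---that $(\mathcal{B}(S),0,0)$ is good---is the substantive point, and I would defer its verification to the forthcoming Proposition~\ref{prop.translation}. I expect the argument there to proceed inductively on the iteration counter: at iteration $t$ one shows that the current configuration encodes, under the dictionary, a rewriting of $|\Delta^t(S)|$; processing a ball $(j,s)$ then corresponds to applying the Pascal identity $\binom{n-j-s}{k-s}=\binom{n-j-s-1}{k-s}+\binom{n-j-s-1}{k-s-1}$ repeatedly between the ball's bin and the current wall, which advances the wall by one and generates precisely the set of child balls prescribed by the process. The main obstacle will be to check that after each iteration the delay bookkeeping realigns with the cardinality of the next shadow $\Delta^{t+1}(S)$, so that at iteration $t+1$ every to-be-processed ball is legally placed at or to the left of the new wall; this is exactly what Proposition~\ref{prop.translation} establishes, so no new work is needed here beyond the well-definedness verified above.
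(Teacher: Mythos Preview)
Your proposal is correct and aligns with the paper's treatment: the observation is a definition/dictionary, and both you and the paper defer the substantive goodness claim to Proposition~\ref{prop.translation}. Your explicit verification of well-definedness (extracting $s=|e_i|+|\ell_e(P_f)|$, $j=|\ell_v(P_f)|-|\ell_e(P_f)|\ge 0$, and finiteness) is a welcome addition that the paper leaves implicit.

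One small remark on your forward-looking sketch: the paper's actual argument for goodness in Proposition~\ref{prop.translation}\ref{en:prop_trans_1} is not quite the iteration-by-iteration bookkeeping you anticipate. Instead it is a direct combinatorial bound: for the edge $e_j$ (in a comfortable ordering), every blocking set in $B_j$ has size at most $j-1$, so the ball attached to the root leaf of $T_{e_j}$ sits at position at most $j-1$; since the $j-1$ balls from $e_1,\ldots,e_{j-1}$ have already pushed the wall to at least $j-1$, that ball (and inductively all balls in $T_{e_j}$) can be legally processed. This is a cleaner route than tracking cardinalities of shadows at each step, though your sketch is not wrong---it just describes part~\ref{en:prop_trans_2} rather than part~\ref{en:prop_trans_1}.
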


\begin{lemma} \label{lem:hip_num}
	The sequence of hypotenusal numbers $a[i]$, $i\geq 0$, can be obtained as  $a[i]=w_{i+1}-w_{i}$, where $w_i$ is the sequence of walls obtained 
	from the hypothenusal process with the initial configuration $(\mathcal{B}_0,\allowbreak w,0)=\allowbreak (\{(0,0)\},1,0)$, and with $w_{0}=w$. 
\end{lemma}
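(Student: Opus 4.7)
The plan is to prove by induction on $t$ that the number $B_t := w_{t+1}-w_t$ of balls processed during iteration $t$ coincides with $a[t]$. Each ball processed moves the wall one step to the right, so $B_t$ is precisely the cardinality of the set of delay-$t$ balls present at the start of iteration $t$; the lemma then amounts to the identity $B_t=a[t]$ for every $t\ge 0$.

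First I would verify that the process is well-defined, order-independent, and compute the base cases. Balls of delay $t+1$ are created only while processing balls of delay $t$, and by construction each one is placed at a position strictly less than the wall at the moment of creation, hence at most $w_{t+1}-1$. So at the start of iteration $t+1$ every delay-$(t+1)$ ball lies weakly to the left of the wall and is legally processable; therefore the process never terminates abruptly. Since the wall only moves to the right, any ball that is legal remains legal after any further processing step, which yields by a standard exchange argument that both $B_{t+1}$ and the wall trajectory are independent of the order chosen for processing balls within an iteration. A direct computation from $(\{(0,0)\},1,0)$ produces $(B_0,B_1,B_2,B_3)=(1,1,2,6)$, matching the first four hypotenusal numbers in \eqref{eq:hypot_numbers}.

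For the inductive step I would identify the dynamics of the process with the construction underlying Lucas's hypotenusal triangle, Table~\ref{t.1}. Sorting the balls of delay $t$ in non-decreasing order of position $j_{t,1}\le\dots\le j_{t,B_t}$ (and processing them in that order by the previous paragraph's independence result), the number of new balls produced while processing the $i$-th ball is $r_{t,i}=w_t+i-1-j_{t,i}$. I would show that the array $(r_{t,i})$ read row by row reconstructs Lucas's triangle: the ``subtract one from the element above'' clause in the caption of Table~\ref{t.1} records the fact that the first ball of a new iteration sits one position to the left of the wall that has just finished advancing, while the additive clause ``row $j$ column $i$ equals the sum of row $j-1$ columns $\le i$'' records the cumulative advance of the wall within a single iteration. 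Because the bold diagonal entries of Lucas's triangle are by definition $h[t+1]-h[t]=a[t]$ via \eqref{eq:hypotenusal}, the identification gives $B_t=\sum_i r_{t-1,i}=a[t]$, and hence $w_{t+1}-w_t=a[t]$ as required.

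The main technical obstacle is the matching with Lucas's triangle: it requires an auxiliary induction that tracks not just the counts $B_t$ but the full multiset of positions $(j_{t,1},\dots,j_{t,B_t})$ at each delay, verifying that these multisets realize consecutive rows of the triangle. Once this finer correspondence is established, the totals along the bold diagonal automatically coincide with the hypotenusal numbers, completing the proof.
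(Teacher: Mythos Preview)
Your overall strategy---match the dynamics of the process to Lucas's triangle and read off the bold diagonal---is exactly what the paper does. But the specific quantity you propose to track, $r_{t,i}$ (the number of balls created when processing the $i$-th ball of iteration $t$), does \emph{not} reconstruct Lucas's triangle row by row. A direct computation with the six delay-$3$ balls, processed in non-decreasing position order $0,1,1,2,2,3$ with the wall starting at $5$, gives $r_3=(5,5,6,6,7,7)$; this sequence appears nowhere in Table~\ref{t.1} as a row, column, or block. The prefix-sum rule in the caption of Table~\ref{t.1} describes how \emph{bin occupancies} accumulate from one column to the next, not how many descendants each individual ball spawns, so your two clauses (``subtract one'' and ``additive'') do not correspond to the recursion governing $(r_{t,i})$.

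The fix is the one you already hint at in your last paragraph: track the multiset of positions, i.e.\ the bin counts. The paper does precisely this, defining $T(i,j)$ as the number of balls in bin $i$ at the end of iteration $j-1$, and then verifying that $T$ satisfies the same recurrence as (a one-column shift of) Lucas's table $L$. The boldface entry in column $j$ of $T$ is, by the prefix-sum rule, the full column sum of column $j-1$, which equals $B_{j-1}$; the identification $T(i,j)=L(i,j-1)$ then forces these to be the hypotenusal numbers. So your plan is correct in outline but the auxiliary induction has to be carried out on bin counts rather than on the per-ball creation numbers $r_{t,i}$.
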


\begin{proof}
	Consider the following table $T$ (which is similar to Table~\ref{t.1}, but not the same):
	\begin{center}
		\begin{tabular}{rrrrrrrr}
			\hline
			1& \textbf{1} & 1 & 1 &1 &1 &1 &\ldots \\
			\hline
			&1& \textbf{2}& 3 &4 &5 &6 &\ldots \\
			\hline
			&& 2& 5& 9 &14 &20 &\ldots\\
			&&1 &\textbf{6}& 15& 29& 49& \ldots \\
			\hline
			&&&  6& 21& 50 &99 &\ldots \\
			&&&5& 26& 76& 175& \ldots \\
			&&& 4 &30& 106& 281 &\ldots \\
			&&&  3 &33& 139& 420& \ldots \\
			&&&  2 &35& 174& 594& \ldots \\ 
			&&&  1 &\textbf{36}& 210& 804 &\ldots \\
			\hline
			&&&& 36& 246& 1050 &\ldots \\
			&&&&  35& 281& 1331& \ldots \\
			&&&&	  34 &315& 1646& \ldots \\
			&&&&	  33& 348& 1994 &\ldots \\
			&&&&\ldots & \ldots & \ldots &\ldots\\
			\hline
		\end{tabular}
	\end{center}
	where the numbers in boldface are the hypothenusal numbers, the position $T(i,j)$ is obtained from the previous column as follows
	\begin{equation} \nonumber
	\begin{cases}
	1 & i=1 \wedge j\geq 1 \\
	0 &  i\geq 2 \wedge j=1 \\
	\sum_{k=1}^{i}T(k,j-1) & i\geq2\wedge j\geq 2 \wedge T(i,j-1)>0 \text{ (sums along prev. column from top to curr. row) } \\
	T(i-1,j) & i\geq2\wedge j\geq 2\wedge T(i-1,j)>0 \wedge T(i-1,j-1)>0 \wedge T(i,j-1)=0 \text{ (repeat last)} \\
	T(i-1,j)-1 & i\geq 2\wedge j\geq 2\wedge T(i-1,j)>0\wedge T(i-1,j-1)=0\wedge T(i,j-1)=0 \text{ (decrease by 1)} \\
	0 & \text{ otherwise}
	\end{cases}
	\end{equation}
	That is to say: the first row is the all $1$ and the first column is the all $0$ (except the first element of the row being $1$). The boldfaced element on each column, besides the on the first column, appears at the row where the previous column has its last non-zero element. Then, from that point on, we repeat the boldfaced element and decrease it by one unit in the remaining rows. The first element in the column are the partial sums of the elements in the previous column up to that row's position (as long as they they are non-zero).
	
	Now, by closely examining the hypothenusal process induced by the starting position configuration $(\{(0,0)\},0,0)$, one can observe deduce that the element $T(i,j)$ gives the number of balls in the $i$-th bin at the end of the $(j-1)$-th iteration. 
	
	The element $L(i,j)$ in Table~\ref{t.1} is (considering the second row to be the first one in the notation $L$) generated as follows:
	\begin{equation}
	\begin{cases}
	1 & i=1 \wedge j\geq 1 \\
	0 & i=1 \wedge j>1 \\
	\sum_{k=1}^j L(i-1,k) & i\geq 2 \wedge j\geq 2 \wedge L(i-1,j-1)>0 \\
	L(i-1,j)-1 & i\geq 2 \wedge j\geq 2 \wedge L(i-1,j-1)=0 \wedge L(i-1,j)\geq 1 \\
	0 &\text{otherwise}
	\end{cases}
	\end{equation}
	creating the following table $L$ (which is the table from Table~\ref{t.1} without the first row):
	\begin{center}
		\begin{tabular}{rrrrrrr}
			\hline
			\textbf{1}& 1& 1 &1 &1 &1 &\ldots \\
			\hline
			& \textbf{2}& 3& 4 &5 &6 &\ldots\\
			&1 &5& 9& 14& 20& \ldots \\
			\hline
			&&  \textbf{6}& 15& 29 &49 &\ldots \\
			&&5& 21& 50& 99& \ldots \\
			&& 4 &26& 76& 175 &\ldots \\
			&&  3 &30& 106& 281& \ldots \\
			&&  2 &33& 139& 420& \ldots \\ 
			&&  1 &35& 174& 594 &\ldots \\
			\hline
			&&& \textbf{36}& 210& 804 &\ldots \\
			&&&  35& 246& 1050& \ldots \\
			&&&	  34 &281& 1331& \ldots \\
			&&&	  33& 315& 1646 &\ldots \\
			&&&\ldots & \ldots & \ldots &\ldots\\
			\hline
		\end{tabular}
	\end{center}
To conclude the almost-equality between the two tables (for almost all $(i,j)$, we have $T(i,j)=L(i,j-1)$)
we observe that:
 $T(i,j)=T(i-1,j)+T(i,j-1)$ if $T(i,j-1)\neq 0$, and $T(i,j)=\max\{T(i-1,j)-1,0\}$ if $T(i,j-1)= 0$, while
$L(i,j)=L(i-1,j)+L(i,j-1)$ if $L(i-1,j-2)\neq 0$, and $L(i,j)=\max\{L(i-1,j)-1,0\}$ if $L(i-1,j-1)= 0$, and $L(i,j)=L(i-1,j)+L(i-1,j-1)$ if $L(i-1,j-1)>0$ and $L(i-1,j-2)=0$.
\end{proof}

\begin{lemma}
	Assume that the hypothenusal process induced by the initial configuration $(\mathcal{B}_0,\allowbreak w_0,0)$
	\begin{itemize}
		\item  does not end abruptly,
		\item has $w_0,w_1,\ldots,w_j,\ldots$ as its sequence of walls,
		\item at the beginning of iteration $i$, there is a ball $(w_i-1,i)$.
	\end{itemize}
	Then, $w_{i+j+1}-w_{i+j}\geq a[j]$ for each $j\geq 0$, where $a[j]$ is the $j$-th hypothenusal number.
\end{lemma}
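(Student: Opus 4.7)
The plan is to compare the given process with a \emph{reference process}, defined as the hypothenusal process starting from the configuration $(\{(w_i-1, 0)\}, w_i, 0)$. By translating the canonical process of the preceding lemma by $w_i - 1$ in position, the reference's wall increments are exactly $\tilde{w}_{j+1} - \tilde{w}_j = a[j]$, where $\tilde{w}_j$ denotes its wall at the start of iteration $j$. Since each ball processed in the real (given) process contributes exactly $+1$ to the wall, it suffices to exhibit, at real's iteration $i+j$, at least $a[j]$ balls being processed.

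I will track the \emph{seed descendants} in the real process: the original ball $(w_i-1, i)$ together with every ball obtained by processing a previously labelled seed descendant. By induction on $j \geq 0$ I establish the invariant $H(j)$: at the start of real's iteration $i+j$, the multiset of positions of seed descendants contains, as a submultiset, the multiset of positions of balls of the reference at the start of its iteration $j$; and moreover $w_{i+j} \geq \tilde{w}_j$. The base case $H(0)$ is immediate, since both multisets are $\{w_i-1\}$ and $w_i = \tilde{w}_0$ by construction.

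For the inductive step, I use that the multiset of balls produced in a single iteration is invariant under the order in which the in-iteration balls are processed (a swap argument on two consecutive ball processings suffices). This lets me analyze iteration $i+j$ in the real process under the following convenient order: process the matched seed descendants (those corresponding, via $H(j)$, to reference's seeds at the same positions) in the same order as in the reference, interleaving non-seed balls and extra seed descendants as legality allows. With this ordering, when real processes its $k$-th matched seed at position $p$, at least $k-1$ balls have already been processed in the iteration, so the real wall is at least $w_{i+j} + (k-1) \geq \tilde{w}_j + (k-1)$, which is exactly the reference wall when it processes its $k$-th seed at position $p$. Therefore, the new seed descendants created in real, at positions $p, p+1, \ldots, (\text{real wall}) - 1$, contain as a submultiset the reference's new balls at positions $p, p+1, \ldots, \tilde{w}_j + k - 2$. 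Aggregating over all matched seeds, together with the additional contributions of extra (unmatched) seed descendants, yields $H(j+1)$; the bound $w_{i+j+1} \geq \tilde{w}_{j+1}$ follows from the fact that real processes at least $a[j]$ balls.

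From $H(j)$ one reads off that at least $a[j]$ seed descendants are processed in real's iteration $i+j$, so $w_{i+j+1} - w_{i+j} \geq a[j]$, proving the lemma. The main technical subtlety is verifying that the chosen processing order is legal, i.e., that the $k$-th matched seed is legal at the moment we wish to process it. This is handled by the computation above: its position is bounded by reference's $k$-th seed's position, which is at most $\tilde{w}_j + (k-1) \leq w_{i+j} + (k-1)$, which is at most the real wall at that moment. Combined with the facts that newly created balls are immediately legal (their positions are strictly below the wall at creation) and that the wall is monotone non-decreasing, this ensures that no matched seed is ever obstructed and that the process at iteration $i+j$ may indeed be organised in the claimed order.
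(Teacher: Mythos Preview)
Your proposal is correct and takes essentially the same approach as the paper: both track the descendants of the seed ball $(w_i-1,i)$ and compare them against the canonical hypothenusal-number process of the preceding lemma, arguing that the real process always carries at least as many (and at least as well-positioned) descendant balls as the reference. The paper's proof is a two-sentence sketch of exactly this coupling, while you spell out the inductive invariant, the order-invariance of the per-iteration output multiset, and the legality check for the convenient processing order --- all of which are the details the paper leaves implicit.
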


\begin{proof}
	A ball $(i,j)$ in the hypothenusal process is said to be a \emph{descendant} of another $(s,j-1)$ if the ball $(i,j)$ is produced as a consequence of processing the ball $(s,j-1)$.
	The wall is pushed by the balls it has weakly to its left: if the ball $(s,j)$ is such that $s\leq w$, $w$ the position of the wall, then it generates $w-s$ descendant balls. By Lemma~\ref{lem:hip_num}, $(w_i-1,i)$ generates, at each iteration $i+j$, at least $a[j]$ balls to the left of the wall that are going to be processed in the next step (we just take in consideration the descendant balls up to the position that would give the hypothenusal numbers, and no more; these relative positions are the ones deduced from the process in Lemma~\ref{lem:hip_num}.) The result then follows.
\end{proof}

\subsection{Growth of the hypotenusal numbers} \label{sec:growth_hypotenusal}

For a detailed account of the growth of the hypothenusal numbers see
\cite[Page~71, second equation from the bottom]{sylvester1888iv} where the authors show that, asymptotically,
\begin{align} \label{eq:asymp_hamilt_sylv}
\frac{a[i]}{2}&=\left(\frac{a[i-1]}{2}\right)^2+
\frac{4}{3}\left(\frac{a[i-1]}{2}\right)^{\frac{3}{2}}
+
\frac{11}{18}\left(\frac{a[i-1]}{2}\right)+\frac{10}{81}\left(\frac{a[i-1]}{2}\right)^{\frac{3}{4}} \nonumber \\
&\qquad +\frac{11}{45}\left(\frac{a[i-1]}{2}\right)^{\frac{1}{2}}
\left[\frac{2^3}{3^3}\left(\frac{a[i-1]}{2}\right)^{\frac{1}{8}}+\frac{2^4}{3^4}\left(\frac{a[i-1]}{2}\right)^{\frac{1}{16}}+\ldots+\frac{2^j}{3^j}\left(\frac{a[i-1]}{2}\right)^{\frac{1}{2^j}}+\ldots\right] 
\end{align}
Note that \eqref{eq:asymp_hamilt_sylv} corrects some imprecisions with respect to the asymptotic expression appearing in \cite[Page~311, third equation from the top]{sylvester1887x}.

In the remaining of this section we use the ideas of correspondence between the hypothenusal process and the hypothenusal numbers given in Lemma~\ref{lem:hip_num} to provide some simple recurrences between $a[i]$ and $a[i-1]$ and thus deducing an expression for the asymptotic growth.
For a lower bound for $a[i]$ we can show:
\begin{claim}
	\label{cl.2}
	\begin{equation}\label{eq.hypoth}
	a[i]\geq \binom{a[i-1]+1}{2}+a[i-1], \text{ for }i\geq 2
	\end{equation}
\end{claim}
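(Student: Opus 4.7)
The plan is to analyze the hypotenusal process started from $(\{(0,0)\},1,0)$ directly by counting the sizes of the multisets $\mathcal{B}_i$. First I would establish by induction on $i$ that every ball in $\mathcal{B}_i$ carries delay counter exactly $i$: processing a ball of delay $i-1$ in iteration $i-1$ produces balls of delay $i$, which by the rules of the process are placed into $\mathcal{B}_i$ rather than reinjected into the current queue, since $i>i-1$. A consequence is that in iteration $i$ the balls processed are exactly those of $\mathcal{B}_i$, and because each processing advances the wall by one, $a[i]=|\mathcal{B}_i|$.

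Next I would write $|\mathcal{B}_i|$ as an explicit sum coming from iteration $i-1$. Let $p_1,\ldots,p_{a[i-1]}$ be the positions of the balls of $\mathcal{B}_{i-1}$ in the order they are processed during iteration $i-1$. When the $j$-th such ball is processed, the wall is at $w_{i-1}+j-1$ (since the preceding $j-1$ processings have each advanced the wall by one), and the processing creates exactly $w_{i-1}+j-1-p_j$ new balls, all of which land in $\mathcal{B}_i$. Rewriting $w_{i-1}+j-1-p_j=j+(w_{i-1}-1-p_j)$ and using $\sum_{j=1}^{n}j=\binom{n+1}{2}$, I obtain
\[
a[i]=\binom{a[i-1]+1}{2}+\sum_{j=1}^{a[i-1]}\bigl(w_{i-1}-1-p_j\bigr).
\]

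The remaining (and main) step is to lower-bound each summand by $1$, that is, to show $p_j\le w_{i-1}-2$ for every $j$. The balls of $\mathcal{B}_{i-1}$ were produced during iteration $i-2$ (or during iteration $0$ when $i=2$), and throughout that iteration the wall never exceeded $w_{i-1}-1$; since a ball spawned when the wall is at position $w$ lies at a position $\le w-1$, one has $p_j\le w_{i-1}-2$ for every $j$. This gives $\sum_{j}(w_{i-1}-1-p_j)\ge a[i-1]$, and hence $a[i]\ge \binom{a[i-1]+1}{2}+a[i-1]$. The base case $i=2$ requires no separate treatment, as the unique ball of $\mathcal{B}_1$ sits at position $0=w_1-2$. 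I expect the only slightly delicate part to be the bookkeeping of delays and positions across consecutive iterations, but this follows directly from the definition of the process.
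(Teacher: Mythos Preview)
Your argument is correct and rests on the same key observation as the paper's proof: every ball of $\mathcal{B}_{i-1}$ sits at position at most $w_{i-1}-2$, so each of the $a[i-1]$ processings in iteration $i-1$ spawns at least $j+1$ descendants (for the $j$-th ball). The paper packages this same fact slightly differently---it introduces a modified process in which, at the start of each iteration, all balls are relocated to the rightmost admissible slot, and argues that this modified process yields a lower bound that equals $\binom{a[i-1]+1}{2}+a[i-1]$---whereas you write $a[i]$ explicitly as $\binom{a[i-1]+1}{2}+\sum_j(w_{i-1}-1-p_j)$ and bound the residual sum by $a[i-1]$. Your direct-sum formulation is a bit cleaner and avoids the bookkeeping of comparing two coupled processes, but the substance is the same.
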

\noindent which implies that 
\begin{equation}\label{eq.hypoth2}
a[i]\geq\binom{a[i-1]+1}{2}+a[i-1]=\frac{(a[i-1]+1)a[i-1]}{2}+\frac{a[i-1]}{2}\geq \frac{a[i-1]^2}{2}, \text{ for }i\geq 2
\end{equation}
Now, for $i =4$, $2^{2^{i-2}+1}=32$ and $a[4]=36$, a straight inductive argument shows that, if $a[i-1]\geq 2^{2^{(i-1)-2}+1}$ then  $a[i]\geq 2^{2^{i-2}+1}$ as $a[i]\geq a[i-1]^2/2\geq 2^{2^{i-3}+1}\cdot 2^{2^{i-3}+1}/2 = 2^{2^{i-2}+1}$. Hence, we obtain that:
\begin{equation}\label{eq.hypoth3}
a[i]\geq 2^{2^{i-2}+1}, \text{ for }i\geq 4
\end{equation}
which is a double exponential increase. 
On the other hand, for an upper bound:
\begin{claim} \label{cl.3}
	\begin{equation} \label{eq.hypoth4}
	a[i-1]^2\geq a[i], \text{ for }i\geq 4\qquad \text{ and furthermore } \qquad 
	2^{2^i}\geq a[i] 
	\end{equation}
\end{claim}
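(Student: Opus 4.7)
The plan is to exploit the hypotenusal process directly, refining Lemma~\ref{lem:hip_num} into an explicit recurrence. With initial condition $(\{(0,0)\},1,0)$, let $p_1^{(i)}\le\cdots\le p_{a[i]}^{(i)}$ be the multiset of ball positions at the start of iteration $i$, and let $w_i$ be the wall position at that moment. When the $k$-th of these balls is processed the wall has already moved $k-1$ units, so it sits at $w_i+k-1$, and processing adds one ball at each of the positions $p_k^{(i)},\,p_k^{(i)}+1,\ldots,w_i+k-2$, for a total of $w_i+k-1-p_k^{(i)}$ new balls of delay $i+1$. Summing over $k$ yields the identity
\[
a[i+1]=a[i]\,w_i+\binom{a[i]}{2}-\sum_{k=1}^{a[i]}p_k^{(i)}.
\]
Dropping the non-positive sum $-\sum p_k^{(i)}$ produces $a[i+1]\le a[i]w_i+\binom{a[i]}{2}$, and the right-hand side is at most $a[i]^2$ exactly when $w_i\le(a[i]+1)/2$. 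Hence the first assertion reduces to establishing this wall bound for $i\ge 4$.

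This I would prove by induction on $i\ge 4$. The base case $i=4$ is a direct evaluation: $w_4=1+a[0]+a[1]+a[2]+a[3]=11\le 37/2=(a[4]+1)/2$. For the inductive step, $w_{i+1}=w_i+a[i]\le(3a[i]+1)/2$, so it suffices to show $a[i+1]\ge 3a[i]$. Claim~\ref{cl.2} already gives $a[i+1]\ge\binom{a[i]+1}{2}+a[i]$, and whenever $a[i]\ge 3$ one has $\binom{a[i]+1}{2}=\tfrac{a[i](a[i]+1)}{2}\ge 2a[i]$, hence $a[i+1]\ge 3a[i]$. Since $a[4]=36$ and $a[i]$ is nondecreasing (again by Claim~\ref{cl.2}), the hypothesis $a[i]\ge 3$ holds throughout the induction range. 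This gives $a[i+1]\le a[i]^2$ for $i\ge 4$, i.e.\ $a[i-1]^2\ge a[i]$ for $i\ge 5$, and the boundary case $i=4$ is the tight equality $a[3]^2=36=a[4]$ read off from \eqref{eq:hypot_numbers}.

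For the second inequality $a[i]\le 2^{2^i}$ one runs a one-line induction. The base cases $i=0,1,2,3$ are immediate from \eqref{eq:hypot_numbers}, the worst being $a[3]=6\le 256=2^{2^3}$. For $i\ge 4$, the first part of the claim together with the inductive hypothesis yields $a[i]\le a[i-1]^2\le\bigl(2^{2^{i-1}}\bigr)^2=2^{2^i}$.

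The only real obstacle is that the wall inequality $w_i\le(a[i]+1)/2$ fails at $i=3$, where $w_3=5>3.5$, so the induction cannot be pushed back any earlier; this is precisely why the identity $a[4]=a[3]^2$ is exact and the claim must begin at $i\ge 4$. After this boundary, the doubly exponential growth already encoded in Claim~\ref{cl.2} forces the estimate to hold with increasing slack, and there is no need to pursue a finer control on the missing term $\sum p_k^{(i)}$.
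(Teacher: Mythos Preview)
Your argument is correct. The identity
\[
a[i+1]=a[i]\,w_i+\binom{a[i]}{2}-\sum_{k=1}^{a[i]}p_k^{(i)}
\]
follows exactly as you say (the order of processing is immaterial for the total count, so you may sort the balls; and for this particular initial condition every ball present at the start of iteration $i$ has delay exactly $i$, so the newly created balls are precisely $\mathcal{B}_{i+1}$). The induction bootstrapping $w_i\le(a[i]+1)/2$ via $a[i+1]\ge 3a[i]$ is clean and valid for $i\ge 4$, and the tight boundary case $a[3]^2=36=a[4]$ is handled separately.

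Your route is genuinely different from the paper's. The paper argues geometrically about the \emph{shape} of the ball configuration: the rightmost $a[i]$ bins form an isosceles right triangle of side $a[i]$, and the remaining bins to the left have strictly decreasing occupancies, so the whole configuration fits inside two such triangles, giving $a[i+1]\le a[i]^2+a[i]$; a further refinement on the rate of decrease for $i\ge 4$ then removes the extra $+a[i]$. Your approach bypasses this shape analysis entirely by writing down the exact count and bounding it via the simple observation $\sum p_k^{(i)}\ge 0$, reducing the problem to a wall inequality that is then carried by Claim~\ref{cl.2}. The paper's argument gives more qualitative structural information about how balls are distributed across bins; yours is shorter, more self-contained, and makes the dependence on the base case $i=4$ completely transparent.
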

\noindent and we conclude that
\begin{equation} \label{eq.hyp_all}
2^{2^i}\geq a[i]\geq 2^{2^{i-2}+1},\text{ for $i\geq 4$}
\end{equation}

\begin{proof}[Proof of Claim~\ref{cl.2}]
	$a[i]$ is the number of balls on the bins and balls configurations that are active at the beginning of step $i$.
	Consider the configuration for $i=2$ that leads to $a[2]=2$. This has a ball at position $0$, a ball at position $1$, no ball on position $2$, and the wall at position $3$; so $a[2]=2$ as there are two balls that are active. 
	We modify the hypotenusal process as follows, first observe that, from $a[1]$ onwards, all the balls are to the left of the wall, and actually the position $w$ has no balls. Then we place all the balls to the position $w-1$, run the hypotenusal process, and repeat this procedure (relocating the balls to the spot $w'-1$). Since all the balls are relocated to the right of them, they generate less descendant balls. Therefore, if in this process we obtain the numbers $a'[i]$, we have $a[i]\geq a'[i]$.
	Now we shall observe that we obtain that
	$a'[i]=\binom{a'[i-1]+1}{2}+a'[i-1]$, since the $a[i-1]$ balls generate a triangle with $1+2+\ldots+a'[i]=\binom{a'[i-1]+1}{2}$ balls in it, without considering the original $a'[i-1]$ balls, that can be later added.
	Therefore, we obtain the desired inequality \eqref{eq.hypoth} for $a[i]$.
\end{proof}

\begin{proof}[Proof of Claim~\ref{cl.3}]
	We use a similar strategy as in Claim~\ref{cl.2}. The first elements for the second inequality are checked by hand. $a[i]$ is the number of balls that are active at the beginning of step $i$. Since the $a[i]$ is obtained by just beginning with a single ball (and no further balls with delays to the right of the wall), the balls in the bins $w-1$, $\ldots$, $w-a[i]$ form an isosceles, right-angle triangle with side $a[i]$, and all the balls have been generated by previous balls, all located to the left of the $w-a[i]$. Now, for all the positions $j\in[0,w-a[i]-1]$, the number of balls at position $j$ is strictly larger than the number of balls at position $j-1$; this can be shown by induction and the fact that the balls at the column $j$ are all the balls at column $j$ at step $i$ plus all the balls at columns to the left of $j$. The result then follows by induction (with the caveat that, since we have a triangle to the right of the columns $j\in[0,w-a[i]-1]$, the result also follows, as we will be adding some (strictly) more elements to the right columns, in the next iteration).
	
	Since the number of balls at these positions decrease, they can be encased in a right-angle, isosceles triangle whose side is $a[i]$ as well. This shows that $a[i]^2+a[i]\geq a[i+1]$ (essentially, the two isosceles triangles are covered by square term $a[i]^2$, but then the main diagonal should be counted twice, hence the extra term $a[i]$).
	Finally, when $i\geq 4$, we can be sure that, in the part of $j\in[0,w-a[i]-1]$, the terms from the bin $w-a[i]-1$ towards the beginning decrease by more than two units from one bin to the next, and thus the claimed inequality follows).
\end{proof}


\section{First applications of the hypotenusal process} \label{sec:app_hip}

In this section we use the hypothenusal process to argue the main results of this article.

\subsection{Monotonicity of the shadow k-binomial decomposition} \label{sec:proof_prop_trans}

Theorem~\ref{thm:main} follows from the following proposition.

\begin{proposition} \label{prop.translation}
	Let $S\subset \binom{[n]}{k}$ be a family of sets, and $(\mathcal{B}(S),0,0)$ its initial bins-balls-wall configuration of $S$. Then
	\begin{enumerate}[label=(\roman*)]
		\item\label{en:prop_trans_1} the hypotenusal process from Section~\ref{sec:bin_balls_wall} does not end abruptly for the initial conditions  $(\mathcal{B}(S),0,0)$.
		\item \label{en:prop_trans_2}	If $w_1,\ldots,w_{k}$ are the positions of the wall at the end of the steps $1,\ldots, k$, then the shadow $k$-binomial decomposition of $S$ is given by
		\[
		|S|=\binom{n-w_1-1}{k}+\binom{n-w_2-2}{k-1}+\ldots+\binom{n-w_{k-1}-(k-1)}{2}+\binom{n-w_{k}-(k)+1}{1}
		\]
		\item\label{en:prop_trans_3} If $\mathcal{B}_{k+1}$ is the set of balls at the end of the step $k$, then
		\begin{align}
		|S|&\doteq\binom{n-w_1-1}{k}+\binom{n-w_2-2}{k-1}+\ldots+\binom{n-w_{k-1}-(k-1)}{2}+\binom{n-w_{k}-(k)+1}{1}+\nonumber\\ 
		&\qquad +\sum_{(j,s)\in \mathcal{B}_{k+1}} \binom{n-j-s}{k-s} 
		\end{align}
		with all the delays $s$ satisfying $s\geq k+1$.
	\end{enumerate}
\end{proposition}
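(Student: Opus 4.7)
My plan is to prove all three parts by a single induction on the iteration counter $t$, carrying an invariant that simultaneously tracks the accumulating shadow terms, the wall position, and the currently-stored balls. The base case is the identity
\[
|S| \;\doteq\; \binom{n}{k} \;-\; \sum_{(j,s)\in\mathcal{B}(S)} \binom{n-j-s}{k-s},
\]
which holds translation-invariantly because Proposition~\ref{prop:tree_prop}(iii) expresses $|S^c|$ as a sum of binomials, one per leaf of the trees $T_{e_i}$, each corresponding to a disjoint family of forbidden $k$-extensions of an edge of $H(S)$. I would maintain the invariant that, at the end of iteration $t$,
\[
|S| \;\doteq\; \sum_{i=1}^{t} \binom{n-w_i-i}{k-i+1} \;+\; \binom{n-w_t-t}{k-t} \;-\; \sum_{(j,s)\in\mathcal{B}_{t+1}} \binom{n-j-s}{k-s},
\]
whose trailing term $\binom{n-w_t-t}{k-t}$ is the ``residual'' awaiting processing; the sign on the final sum is reconciled with the statement of (iii) by exchanging equivalent representatives of the $\doteq$-class using the binomial recurrence.

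For the inductive step, the key observation is that the hypotenusal process is precisely a bookkeeping device for the translation-invariant identity
\[
\binom{n-j-(t+1)}{k-(t+1)} \;\doteq\; \binom{n-W-(t+1)}{k-(t+1)} \;+\; \sum_{p=j}^{W-1} \binom{n-p-(t+2)}{k-(t+2)}
\]
applied to each delay-$(t+1)$ ball $(j,t+1)$ at its moment of processing, where $W$ is the current wall: the first summand is the ``wall contribution'' and the rest are the new balls emitted into $\mathcal{B}_{t+2}$. Summing over the $w_{t+1}-w_t$ balls processed in iteration $t+1$, the wall contributions telescope via hockey-stick to $\binom{n-w_t-t}{k-t}-\binom{n-w_{t+1}-t}{k-t}$; one further application of $\binom{N}{K}=\binom{N-1}{K}+\binom{N-1}{K-1}$ cleanly splits the remaining $\binom{n-w_{t+1}-t}{k-t}$ into the next shadow term $\binom{n-w_{t+1}-(t+1)}{k-t}$ plus the new residual $\binom{n-w_{t+1}-(t+1)}{k-(t+1)}$, preserving the invariant.

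Specialising to $t=k$ yields (ii) and (iii) simultaneously. The final residual $\binom{n-w_k-k}{0}$ combines with the $k$-th shadow term $\binom{n-w_k-k}{1}$ via $\binom{N}{1}+\binom{N}{0}=\binom{N+1}{1}$ to produce $\binom{n-w_k-k+1}{1}$, matching the last term of the stated formula. Every ball in $\mathcal{B}_{k+1}$ has delay $\geq k+1$, so each $\binom{n-j-s}{k-s}$ has negative lower index and vanishes numerically, giving the shadow $k$-binomial decomposition equality of (ii) while remaining a nontrivial formal translation-invariant summand, giving (iii).

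The main obstacle is part (i): proving the hypotenusal process never terminates abruptly on the initial configuration $\mathcal{B}(S)$. I would argue by induction on $t$ that every ball of delay $t$ has position $j\leq W$ at the moment it is selected. Balls inserted during earlier iterations satisfy this automatically, since by construction each new ball is placed at a position strictly below the wall at insertion, and the wall only grows. The delicate case is the initial balls from $\mathcal{B}(S)$: one must verify that, under a comfortable ordering of the edges of $H(S)$, the position $j=|\ell_v(P_f)|-|\ell_e(P_f)|$ of a leaf of $T_{e_i}$ lies within the wall window by the time iteration $|e_i|+|\ell_e(P_f)|$ reaches it. This reduces to a combinatorial matching between the minimality built into the dominating sets $B_i$ (from Section~\ref{sec:tree_coded}) and the cumulative wall growth produced by processing the trees of earlier edges.
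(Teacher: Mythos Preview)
Your approach mirrors the paper's: both induct on the iteration counter, carrying a translation-invariant identity that expresses $|S|$ as the accumulated shadow terms plus a residual $\binom{n-w_t-t}{k-t}$ minus the current ball set, and both process a ball $(j,t{+}1)$ by iterating the binomial recurrence until its leading piece cancels against the next split of the residual. Your final-step observation, combining $\binom{n-w_k-k}{1}+\binom{n-w_k-k}{0}$ into $\binom{n-w_k-k+1}{1}$, is exactly how the paper obtains the anomalous last shadow term.

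For part~(i) the paper supplies the concrete bound you gesture at but do not state: each $s\in B_j$ dominates $j-1$ edges and is minimal, hence $|s|\le j-1$; under a comfortable edge ordering the root-leaf ball of $T_{e_j}$ therefore sits at position $\le j-1$, while the root-leaf balls of $e_1,\dots,e_{j-1}$ (all with delay $\le |e_j|$) have already pushed the wall to at least $j-1$, making it legal. Your phrase ``reduces to a combinatorial matching'' is right in spirit, but this explicit inequality is the step that makes the matching go through; once you have it, the remaining initial balls are handled by the same mechanism together with your observation that descendants of legal balls are automatically legal.
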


\noindent The second part of Proposition~\ref{prop.translation} corresponds to Theorem~\ref{thm:hypotenusal}. The proof of Theorem~\ref{thm:main} can be found at the end of this section.

\begin{proof}[Proof of Proposition~\ref{prop.translation}]
Let us begin by showing the part \ref{en:prop_trans_2} assuming \ref{en:prop_trans_1}.
We use the translation from Observation~\ref{obs:dict_bbw-sbd}: the ball corresponding to the binomial coefficient $\binom{n-j-s}{k-s}$ is located at position $j$ with delay $s$. Observe that all the balls have delay $\geq 1$, thus at iteration $0$ nothing happens.

Let us proceed by induction. We begin with the expression $\binom{n-w_0}{k}=\binom{n-0}{k}$ at the end of the iteration $0$, regarding the size of $S$ (since no elements have been substracted yet). 
We break
\begin{equation}\label{eq:break_ini}
\binom{n-w_0}{k}:=\binom{n-w}{k}\doteq\binom{n-w-1}{k}+\binom{n-w-1}{k-1}
\end{equation}
If there are no balls with delay $1$, then we
determine that $w_1=0$, keep $\binom{n-w_1-1}{k}$ as the first term of the shadow $k$-binomial decompsition, and continue the bin-ball-wall process with
$\binom{n-w_1-1}{k-1}$.
If there is a ball with delay $1$, then by \ref{en:prop_trans_1} the ball $(j,1)$ can be processed and thus we have $j\leq w$ so $j\leq 0$, which is related to a binomial coefficient $\binom{n-j-1}{k-1}$. We can break $\binom{n-j-1}{k-1}$ as follows:
\begin{align}
&\binom{n-j-1}{k-1}\doteq\binom{n-j-1-1}{k-1}+\binom{n-j-1-1}{k-2} \nonumber \\
&\doteq\binom{n-j-1-2}{k-1}+\binom{n-j-1-2}{k-2}+\binom{n-j-1-1}{k-2} \nonumber \\
&\doteq\cdots \nonumber \\
&\doteq \binom{n-w-1}{k-1}+\binom{n-w-1}{k-2}
+\cdots+\binom{n-j-1-1}{k-2} \nonumber \\ 
&\doteq \binom{n-w-1}{k-1}+\binom{n-(w-1)-2}{k-2}+\binom{n-(w-2)-2}{k-2}+\cdots+\binom{n-j-2}{k-2} \label{eq:break_2}
\end{align}
so we cancel both terms $\binom{n-w-1}{k-1}$ from \eqref{eq:break_ini} and \eqref{eq:break_2} and we are adding balls to the positions $j,j-1,\ldots,w-2,w-1$ with delay $2$ as claimed. Then we break the term 
$\binom{n-w-1}{k}$ using the sum of binomial coefficients as in \eqref{eq:break_ini} and continue to process the balls of delay $1$ until none is left to be processed, breaking the remaining term $\binom{n-w'}{k}\doteq \binom{n-w'-1}{k}+\binom{n-w'-1}{k-1}$, determining that $w_1=w$ at the end of iteration $1$ and continue doing the same for the term $\binom{n-w'-1}{k-1}$.

In general we identify the wall at position $w$ after finishing the iteration $t$ with the binomial coefficient $\binom{n-w-t}{k-t}$, which we call the \emph{leading binomial coefficient}.
(This becomes the term $\binom{n}{k}$ at iteration $0$, setting the stage for the base of the induction.)

To perform the iteration $t+1$.
We begin by breaking the leading binomial coefficient as:
\begin{equation}\label{eq:break_wall}
\binom{n-w-t}{k-t}\doteq\binom{n-w-t-1}{k-t}+\binom{n-w-t-1}{k-t-1}\doteq\binom{n-(w+1)-t}{k-t}+\binom{n-w-(t+1)}{k-(t+1)}
\end{equation}
then, if there is no ball to be processed, the term $\binom{n-(w+1)-t}{k-t}$ becomes the corresponding term in the shadow $k$-binomial decomposition with the wall being $w$, and we move on towards the next iteration with leading binomial coefficient $\binom{n-w-(t+1)}{k-(t+1)}$.
If there are some balls with delay counter $t+1$, say $(j,t+1)$ and can be legally processed so $j\leq w$, then it corresponds to a binomial coefficient $\binom{n-j-(t+1)}{k-(t+1)}$. 
Break $\binom{n-j-(t+1)}{k-(t+1)}$ as follows:
\begin{align}
&\binom{n-j-(t+1)}{k-(t+1)}\doteq\binom{n-j-1-(t+1)}{k-(t+1)}+\binom{n-j-1-(t+1)}{k-(t+2)} \nonumber \\
&\doteq\binom{n-j-2-(t+1)}{k-(t+1)}+\binom{n-j-2-(t+1)}{k-(t+2)}+\binom{n-j-1-(t+1)}{k-(t+2)} \nonumber \\
&\doteq\cdots \nonumber \\
&\doteq \binom{n-w-(t+1)}{k-(t+1)}+\binom{n-w-(t+1)}{k-(t+2)}
+\cdots+\binom{n-j-1-(t+1)}{k-(t+2)} \nonumber \\ 
&\doteq \binom{n-w-(t+1)}{k-(t+1)}+\binom{n-(w-1)-(t+2)}{k-(t+2)}+\binom{n-(w-2)-(t+2)}{k-(t+2)}+\cdots+\binom{n-j-(t+2)}{k-(t+2)} \nonumber 
\end{align}
so we cancel both terms $\binom{n-w-(t+1)}{k-(t+1)}$ and we are adding balls to the positions $j,j-1,\ldots,w-2,w-1$ with delay $t+2$ as claimed.
At this point we break the wall again
\[
\binom{n-w-t-1}{k-t}\doteq\binom{n-(w+1)-t-1}{k-t}+\binom{n-(w+1)-(t+1)}{k-(t+1)}
\]
which exactly corresponds to moving the wall from $w$ to position $w+1$. And the process is then repeated.

Therefore, the movement of the balls and the wall is the same as having a leading binomial coefficient and process the binomial coefficients, as long as it leads in a good way ($j\leq w$), so it can be legally processed.

Now that we have seen the translation between the operations at the level of binomial coefficients and the one at the level of bins-balls-wall, let us show that, actually, the initial configurations do not end abruptly (part \ref{en:prop_trans_1}).

If we process a ball at some point, then all the descendant balls are also processable. Furthermore:
\begin{observation} \label{obs:1}
If a ball is not eliminated (equivalently: it generates some descendants), and the process does not abruptly end, then there are always balls to be processed (comming from the descendants of the balls that have not been completely eliminated).
\end{observation}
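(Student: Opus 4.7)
The plan is to exhibit an infinite chain of descendants all anchored at a single bin, thereby ensuring that no iteration can run out of balls to process. Suppose the ball $b = (j, s)$ is processed at some iteration $t$ and is not eliminated, meaning $j < w$ where $w$ denotes the wall position at the moment of processing. By the processing rule, $b$ generates descendants $(j, s+1), (j+1, s+1), \ldots, (w-1, s+1)$, each with delay $s+1$; in particular, $b_1 := (j, s+1)$ occupies the same bin $j$ as $b$.

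The key structural fact I would use is that the wall position is monotonically non-decreasing throughout the entire hypotenusal process: it increases by exactly one each time a ball is processed, and is never otherwise modified. Consequently, at any moment after $b$ has been processed the wall sits at some position $w' \geq w+1 > j$. When $b_1$ is eventually processed (which, under the assumption that the process does not terminate abruptly, happens no later than iteration $s+1$), it is again strictly to the left of the wall, and so it generates at least one same-bin descendant $b_2 := (j, s+2)$. Iterating produces an infinite chain $b_1, b_2, b_3, \ldots$ with $b_k = (j, s+k)$, each located at bin $j$.

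Now, each $b_k$ is processed no later than iteration $s+k$, so every iteration $t' \geq s+1$ contains at least one ball awaiting processing (namely $b_{t'-s}$, or one of its same-bin successors if $b_{t'-s}$ has already been processed earlier within iteration $t'$). This establishes the observation. There is no serious obstacle to carrying this argument out; all the content lies in the monotone rightward motion of the wall, which prevents bin $j$ from ever falling outside the processable region, so the descendant chain at bin $j$ must propagate indefinitely.
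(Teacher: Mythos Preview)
Your argument is correct and is the natural justification; the paper itself states Observation~\ref{obs:1} without proof, treating it as immediate from the mechanics of the process. One small imprecision: you write that $b_1$ is processed ``no later than iteration $s+1$,'' but if the original ball $b$ is processed at some iteration $t_0>s$ (which can happen when $b$ is itself a descendant), then $b_1$ is processed at iteration $\max(t_0,s+1)$, not $s+1$. This does not damage the argument, since your parenthetical already anticipates that several balls of the chain may be consumed within a single iteration; the essential point---that the wall only moves rightward, so the bin-$j$ descendant $b_k$ is always strictly left of the wall and therefore spawns $b_{k+1}$---is exactly what is needed, and it guarantees that every iteration from $t_0$ onward has a ball from this chain waiting in $D$.
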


Let $B_j$ 
be the subsets of $[n]$ from Lemma~\ref{lem:d} associated with $e_j$.
 Each of these generates a ball from the initial configuration (associated to the path induced by the leaf vertex hanging from a vertex associated to the set in the tree $T_{e_j}$).
Each of the sets in $B_j$ has size $\leq j-1$, since each set $s\in B_j$ dominates $j-1$ edges, and it is minimal with such property, thus we need at most $j-1$ elements to intersect that many edges.
Since the delay in each edge is given by the number of elements that we have already selected (namely $|e_j|$ plus the different elements that we should select from the different sets $s_i$ along the leaft-to-root path until we reach the desired $s$) we have an binomial of the type
\begin{equation}\label{eq:picked_avoided}
\binom{[n]-|e_j|-|s|-|\{\text{elements picked to reach s}\}|}{k-|e_j|-|\{\text{elements picked to reach s}\}|}, \qquad \text{the elements in $s$ are said to be \emph{avoided}}
\end{equation}
then we are going to be able to process such a ball if we have previously processed $|s|$ other elements in previous iterations (or at the same iteration but previously), since after each iteration the correting term on the wall (that translates between delaying/iteration time, value of the wall, and the binomial numerator) is also lowered by one. If we order the edges comfortably (ordered increasingly according to their size), and then arbitrarily from among the edges of the same size, then this is going to be satisfied, as the smaller edges are processed first, and the size of $s$ is at most the number of previously seen edges (and thus the number of times the wall has already been moved, at least). This shows that the set of balls induced by a hypergraph is always good, thus proving \ref{en:prop_trans_1}.

Part~\ref{en:prop_trans_3} follows from the arguments that shows Part~\ref{en:prop_trans_2} and by the fact that all the operations that have been conducted in the parallelism between the hypotenusal process and the correspondence with the ball and the binomial coefficients are translation invariant. We are also using \ref{en:prop_trans_1} to know that the process indeed can be carried over until the step $k$.
\end{proof}

\begin{proof}[Proof of Theorem~\ref{thm:main}]
	Due to Proposition~\ref{prop.translation}~\ref{en:prop_trans_1}, the hypotenusal process never ends abruptly. As the positions of the wall are clearly non-increasing, by the nature of the process, the dictionary given by Proposition~\ref{prop.translation}~\ref{en:prop_trans_2} shows that the coefficients in the shadow $k$-binomial decomposition are monotonously decreasing (with the exception of the last one).
\end{proof}
An important remark follows from Proposition~\ref{prop.translation}~\ref{en:prop_trans_3} and Theorem~\ref{thm:charac}.
\begin{remark}\label{rmk:extremal_wall}
$S\subset \binom{[n]}{k}$ is extremal $\iff$ the bin-ball-wall process induced by the hypergraph of $H$ is such that $n-w_k-k+1>0$, so $w_k\leq n-k$.

In particular, the number of edges in the hypergraph of an extremal family is $\leq n-k$.
\end{remark}

Theorem~\ref{thm:hypotenusal} is given by Proposition~\ref{prop.translation}~\ref{en:prop_trans_2}. Theorem~\ref{thm:solid_part} also follows from Proposition~\ref{prop.translation}~\ref{en:prop_trans_2} after realizing that the hypergraph of the family $S'$ from \eqref{eq:family_ext} is the hypergraph of $S$ with the additiono of $r$ isolated vertices. Then the computations from Proposition~\ref{prop.translation}~\ref{en:prop_trans_2} is the same but with substituting $n$ by $n+r$; then there exists an $r_0$ after which substituting $n+r$ with $r\geq r_0$ makes the last term $(n+r)-w_k-(k)+1$ to be strictly positive, and thus by Theorem~\ref{thm:charac}, the family $S'$ from Theorem~\ref{thm:solid_part} is extremal.

\subsection{On the initial segment in the colex order: alternative proof of Theorem~\ref{thm:card}} \label{sec:alt_to_mtfg}

\begin{proof}[Proof of Theorem~\ref{thm:card}]
From right to left, we can use the examples of extremal families from \cite[Proposition~2.5, Examples~2.4]{furgri86}; in Section~\ref{sec:cons_a_b} the reader can find some other concrete families that give further examples in the form of Construction~A and Construction~B.

From left to right, we show the contrapositive. Assume that $S$ is not the initial segment in the colex order and that $\ell(\a)<k$. Using Claim~\ref{cl:edges_in_non_colex}, and its notation, and the argument in Lemma~\ref{lem:colex_B_i}, we conclude that:
\begin{obsalt} \label{obs:coeff}
The tree associated to $B_{t+1}$ either induces a binomial coefficient of the type $\binom{n-|e_{t+1}|-|s|}{k-|e_{t+1}|}$ with $|s|<t$ for the cases \ref{exA}, \ref{exb.1}, \ref{exb.2.1},
or at least one binomial coefficients of the type $\binom{n-|e_{t+1}|-(t)}{k-|e_{t+1}|}$ and at least one  of the type $\binom{n-(|e_{t+1}|+1)-(t)}{k-(|e_{t+1}|+1)}$.\footnote{In the second case, the term $\binom{n-|e_{t+1}|-(t)}{k-|e_{t+1}|}$ follows by consider the root of the tree $T_{e_{t+1}}$ with the blocking set $\{u_1,\ldots,u_{t-1},w_1\}$, and the term $\binom{n-(|e_{t+1}|+1)-(t)}{k-(|e_{t+1}|+1)}$ as its neighbour were we have picked $w_1$ and added $w_2$ to the blocking set as a forbidden vertex.}
\end{obsalt}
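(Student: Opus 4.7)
The plan is to perform a case analysis following the four possibilities \ref{exA}, \ref{exb.1}, \ref{exb.2.1}, \ref{exb.2.2} enumerated in Claim~\ref{cl:edges_in_non_colex}, and in each case to exhibit an explicit element (or explicit pair of elements) of $B_{t+1}$ that forces the stated coefficient(s) to appear as the value of some leaf of $T_{e_{t+1}}$ via Proposition~\ref{prop:tree_prop}~\ref{en:tree_prop_3}. Throughout, I use the vertex labels $\{u_1,\ldots,u_t\}$ and $\{v_1,\ldots,v_{|e_t|-1\}}$ introduced in \eqref{eq:vertices_of_hyp_colex} for the initial colex segment induced by $\{e_1,\ldots,e_t\}$, and the freedom of choosing the ordering of $B_{t+1}$ noted in Section~\ref{sec:tree_coded}.

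First I would handle the three ``short'' cases \ref{exA}, \ref{exb.1}, \ref{exb.2.1} simultaneously by repeating the greedy argument already used at the end of the proof of Lemma~\ref{lem:colex_B_i}. In each of them the hypothesis supplies a vertex $v$ lying in the hypergraph of $\{e_1,\ldots,e_t\}$ but outside $e_{t+1}$ whose degree (in $\{e_1,\ldots,e_t\}$) is at least two: in \ref{exA} this is any vertex of the distinguished edge $e$ all of whose vertices have degree $\ge 2$, and in \ref{exb.1}, \ref{exb.2.1} it is an unused vertex from among $\{v_1,\ldots,v_{|e_t|-1}\}$ or $\{v_1,\ldots,v_{|e_{t-1}|-1}\}$ respectively. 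Initialising with $\{v\}$ and greedily adjoining further vertices of $[n]\setminus e_{t+1}$ (each contributing at least one newly dominated edge) until $\{e_1,\ldots,e_t\}$ is fully dominated yields a set $s\in B_{t+1}$ with $|s|\le t-1$, because $v$ already covers two of the $t$ edges. Ordering $B_{t+1}$ so that $s$ comes first makes $s$ the label of the root of $T_{e_{t+1}}$; the $\emptyset$-labelled edge hanging from the root then yields by Proposition~\ref{prop:tree_prop}~\ref{en:tree_prop_3} the coefficient $\binom{n-|e_{t+1}|-|s|}{k-|e_{t+1}|}$ with $|s|<t$, which is the first alternative.

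The main obstacle, and the only case requiring new work, is \ref{exb.2.2}. Here the hypothesis provides two vertices $w_1,w_2$ taken from $\{u_t, v_{|e_{t-1}|}, \ldots, v_{|e_t|-1}\}$ that avoid $e_{t+1}$; each has degree exactly one in $\{e_1,\ldots,e_t\}$ (all lie in the unique edge $e_t$ of its size). I claim that the two sets
\[
s_1 = \{u_1,\ldots,u_{t-1},w_1\}, \qquad s_2 = \{u_1,\ldots,u_{t-1},w_2\}
\]
both lie in $B_{t+1}$: they are disjoint from $e_{t+1}$ by construction; they dominate $\{e_1,\ldots,e_t\}$ because $u_i$ is the unique vertex of $e_i$ not shared with any other $e_j$ (for $i<t$) and $w_r$ covers $e_t$; and minimality follows since removing any single element of $s_r$ leaves some edge $e_j$ uncovered. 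Ordering $B_{t+1}$ so that $s_1$ precedes $s_2$ (and both precede every other element), the root of $T_{e_{t+1}}$ is the internal vertex $(1,s_1)$ with label $s_1$ by \eqref{eq:lab_vert}. Choosing $s=\emptyset$ at the root attaches a leaf on a root-to-leaf path of vertex-label weight $t$ and edge-label weight $0$, contributing the coefficient $\binom{n-|e_{t+1}|-t}{k-|e_{t+1}|}$, the first term required. Choosing instead $s=\{w_1\}\subsetneq s_1$ at the root, the disjointness $s_2\cap\{w_1\}=\emptyset$ makes $s_2$ the minimum-indexed element of $B_{t+1}$ compatible with this choice, so the procedure attaches an internal child $(1,s_2)$ via an edge labelled $\{w_1\}$; the label of this child is by \eqref{eq:lab_vert} equal to $s_2\setminus s_1 = \{w_2\}$. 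Picking $s=\emptyset$ at this child attaches a leaf on a root-to-leaf path with total vertex-label weight $t+1$ and edge-label weight $1$, contributing by Proposition~\ref{prop:tree_prop}~\ref{en:tree_prop_3} the coefficient
\[
\binom{n-|e_{t+1}|-(t+1)}{k-|e_{t+1}|-1}=\binom{n-(|e_{t+1}|+1)-t}{k-(|e_{t+1}|+1)},
\]
which is the second term required.

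Finally I would note that this argument rests on two pieces of freedom: the freedom to pick an ordering of $B_{t+1}$ (explicitly allowed by the remark at the start of Section~\ref{sec:tree_coded}), and the freedom — in \ref{exb.2.2} alone — to relabel $u_t$ with any of the interchangeable degree-one vertices $v_{|e_{t-1}|},\ldots,v_{|e_t|-1}$ of $e_t$, already noted in the footnote to Claim~\ref{cl:edges_in_non_colex}. The only delicate verification is the minimality of $s_1,s_2$ and the confirmation that no earlier-indexed set of $B_{t+1}$ preempts $s_2$ once $\{w_1\}$ has been committed at the root, both of which are immediate from our explicit ordering and the vertex-degree structure.
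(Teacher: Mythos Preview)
Your argument is correct and follows exactly the approach the paper indicates: for \ref{exA}, \ref{exb.1}, \ref{exb.2.1} you reproduce the greedy dominating-set argument from the end of the proof of Lemma~\ref{lem:colex_B_i} to land a short $s\in B_{t+1}$ at the root, and for \ref{exb.2.2} you carry out precisely the construction sketched in the footnote, placing $s_1=\{u_1,\ldots,u_{t-1},w_1\}$ at the root and reaching $s_2=\{u_1,\ldots,u_{t-1},w_2\}$ along the $\{w_1\}$-edge. One small wording point: in case~\ref{exA} you should say ``any vertex of the distinguished edge $e$ \emph{outside $e_{t+1}$}'' (such a vertex exists by the antichain property, and its degree in $\{e_1,\ldots,e_t\}$ then coincides with its degree in $\{e_1,\ldots,e_t,e_{t+1}\}$); with that clarification the proof is complete.
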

Following the translation of Observation~\ref{obs:dict_bbw-sbd} to the hypothenusal process, in both cases these binomial coefficients are associated to balls that are not eliminated by the wall, and thus they have always descendants. This implies that the last two coefficients of the shadow k-binomial decomposition always differ by, at least, a unit. Since we are assuming that the family is extremal, the shadow $k$-binomial decomposition is the same as the $k$-binomial decomposition, and thus this case is shown.

Furthermore, by the preceding argument, together with the fact that, in order to determine that the hypergraph is not the hypergraph of the initial segment in the colex order we need the interaction of at least two edges of the hypergraph, we conclude that: there exists at least a pair of consecutive coefficients of the shadow $k$-binomial decomposition whose difference is, at least, two. Therefore, it cannot be the case that $m=\binom{q}{k}-1$. This finishes the proof.
\end{proof}

\subsection{Adding and substracting elements to an extremal family of sets} \label{sec:adding_and_substracting}

Once an extremal family has been found, it naturaly generates several extremal families closely related by their hypergraphs. The arguments leading to Proposition~\ref{prop.translation} show that the shadow $k$-binomial decomposition of any subhypergraph generate strictly less balls; indeed, order the edges in such a way that the removed edges are the last ones, thus those associated balls do not appear, and the following result follows.

\begin{proposition} \label{p.other_extremal_families} \label{thm:families}
	Let $S$ be a family of $k$-sets with hypergraph $H(S)$. 
	Let $(b_0,\ldots,b_{k-1})$ be the shadow $k$-binomial decomposition sequence for $S$. 
	
	Then any family that induces a subhypergraph  of $H(S)$ (over the same vertex set), has a shadow $k$-binomial decomposition $(b_0',\ldots,b_{k-1}')$ with
	$b_i'\geq b_i$ for each $i\in[0,k-1]$.
	
	In particular, if $S$ is extremal, any family induced by a subhypergraph of $H(S)$ is also extremal and contains $S$ as a subfamily.
	%
	%
	%
\end{proposition}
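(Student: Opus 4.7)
The approach is to leverage the bin-ball-wall dictionary of Proposition~\ref{prop.translation} together with the hypotenusal process from Section~\ref{sec:hypotenusal}. First, I would observe that the hypothesis $H(T)\subseteq H(S)$ has two immediate consequences. On the one hand, since by Definition~\ref{def.hyp_family} the edges of the hypergraph record the minimal complements of the family and its shadows, removing edges from $H(S)$ can only enlarge the family and each of its iterated shadows, so in particular $T\supseteq S$; this disposes of the ``subfamily'' statement. On the other hand, order the edges of $H(S)$ in a comfortable manner placing the edges of $H(T)$ first and those of $H(S)\setminus H(T)$ last. With this ordering, the trees $T_{e_i}$ produced in Section~\ref{sec:tree_coded} for the edges $e_i\in H(T)$ coincide in both families, because the sets $B_i$ of Lemma~\ref{lem:d} only depend on $e_i$ and the previous edges. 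Hence, under this ordering, the initial configuration $\mathcal{B}(T)$ from Observation~\ref{obs:dict_bbw-sbd} is a sub-multiset of $\mathcal{B}(S)$, differing exactly by the balls produced by the trees $T_{e_i}$ with $e_i\in H(S)\setminus H(T)$.

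The key technical step is a coupling lemma: if $\mathcal{B}_T\subseteq \mathcal{B}_S$ as multisets and both processes start with the same wall $w_0=0$, then $w_t(T)\le w_t(S)$ and $\mathcal{B}_{t+1}(T)\subseteq \mathcal{B}_{t+1}(S)$ for every $t\ge 0$. I would prove this by induction on $t$. Within one iteration, pair every ball of delay $\le t$ in $T$ with the identical ball in $S$ and process these pairs first in the same order; then process the remaining balls that only $S$ has. Each process advances its wall by exactly the number of balls it processes, so the inequality $w_t(T)\le w_t(S)$ is preserved. For the multiset inclusion, recall that a processed ball $(j,s)$ with current wall $w$ deposits new balls at positions $j,j+1,\ldots,w-1$ with delay $s+1$; since $w_T\le w_S$ throughout the coupled execution, every descendant generated in the $T$-process is matched by a descendant generated in the $S$-process at the same position, and any extra balls processed in $S$ only produce additional descendants.

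With the coupling in hand, Theorem~\ref{thm:hypotenusal} (equivalently Proposition~\ref{prop.translation}~\ref{en:prop_trans_2}) immediately yields $b_i(T)=n-w_{i+1}(T)-(i+1)\ge n-w_{i+1}(S)-(i+1)=b_i(S)$ for $0\le i<k-1$, and likewise $b_{k-1}(T)=n-w_k(T)-k+1\ge n-w_k(S)-k+1=b_{k-1}(S)$. This establishes the monotonicity assertion. For extremality, Theorem~\ref{thm:charac} says that $S$ is extremal if and only if $b_{k-1}(S)\ge 1$; since $b_{k-1}(T)\ge b_{k-1}(S)\ge 1$, the same criterion gives extremality of $T$.

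The main obstacle I anticipate is formalising the coupling when a processed ball generates new balls whose positions depend on the current wall. The simultaneous maintenance of $w_T\le w_S$ and of the sub-multiset inclusion across iterations needs a careful choice of processing order; once the within-iteration pairing is fixed as above, the inductive step is essentially routine, and all the remaining assertions of the proposition follow by combining Theorem~\ref{thm:hypotenusal} and Theorem~\ref{thm:charac}.
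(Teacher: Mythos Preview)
Your approach is essentially the one the paper sketches in the sentence preceding the proposition (``order the edges in such a way that the removed edges are the last ones, thus those associated balls do not appear''), and your coupling argument fleshes this out nicely. The use of Theorem~\ref{thm:hypotenusal} to convert $w_t(T)\le w_t(S)$ into $b_i'\ge b_i$, and of Theorem~\ref{thm:charac} for the extremality clause, is exactly right.

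There is, however, a subtle gap in the ordering step that you should be aware of. You ask for an ordering of $E(H(S))$ that is simultaneously comfortable and places all of $H(T)$ before all of $H(S)\setminus H(T)$; in general no such ordering exists (take $H(T)=\{\{2,3\}\}$ and $H(S)=\{\{1\},\{2,3\}\}$). If you drop ``comfortable'' and take a strict $H(T)$-first ordering, then indeed $\mathcal{B}(T)\subseteq\mathcal{B}(S')$, but the $S'$-process can now end abruptly: in the example above the extra edge $\{1\}$ produces a root ball at position $1$ with delay $1$, while the wall is still at $0$ at iteration $1$ (the only other ball, from $\{2,3\}$, has delay $2$). If instead you keep the ordering comfortable and only put $H(T)$-edges first \emph{within each size class}, then an $H(T)$-edge of size $s$ may be preceded by an $H(S)\setminus H(T)$-edge of size $<s$, so its set $B_i$ differs in the two processes and the sub-multiset inclusion $\mathcal{B}(T)\subseteq\mathcal{B}(S)$ fails. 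The paper's one-line sketch glosses over exactly this point. One way to repair the argument is to keep the $H(T)$-first ordering and carry the extra balls through the translation-invariant rewriting of Proposition~\ref{prop.translation}\ref{en:prop_trans_3} as additional subtracted terms, rather than insisting they be ``legally processed'' in the bins--balls--wall sense; equivalently, compare the two translation-invariant expressions for $|\Delta^i|$ directly and track the surplus through the recursion for the $\beta_i$.
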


In \cite[Example~2.4]{furgri86}, the authors argue that, whenever the extremal family has a cardinality with $a_{k-1}>1$ (here $a_{k-1}$ correspond to the last element of full k-binomial decomposition), then we can remove an element of the family and keep the family extremal.
The situation is slightly differerent when we want to add an element $e$ to the extremal family. When in the $k$-binomial decomposition of the size of the family both $a_{k-1}$ and $a_{k-2}$ exists (so $a_{k-1}<a_{k-2}$), then the addition of an element should not add any set in its shadow. In the hypergraph of the family, this translates to the fact that $e$ is a $k$-edge of the hypergraph. Indeed, if $e\notin S$, then there exists an edge $e_i\in E(H)$ (perhaps more) such that $e_i\subseteq e$; now, if $e_i=e$, then the family is still extremal (we can see it from two perspectives: a subhypergraph of an extremal family is extremal by Proposition~\ref{thm:families}, or by the definition of the hypergraph, so if $e\in E(H)$, then all the elements in $\Delta(e)$ belong to $\Delta(S)$ or do not belong to the complement). However, if $e_i\subsetneq e$, then the elements $b$ of size $k-1$ with $e_i\subseteq b \subsetneq e$ would belong to $\Delta(S\cup e)$ yet they did not belong to $\Delta(S)$, and thus the set is not extremal.
In particular, this argument shows the following two results:

\begin{theorem}\label{t.other_families3}
	Let $S$ be a family of $k$-sets (not necessarily extremal).
	If the hypergraph of $S$, $H(S)$, does not contain an edge of size $k$, then
	\begin{center}
		either \qquad 
		$S$ is the colex 
		\qquad	or, if that is not the case, \qquad
		for each $s\in S^c$, $S\cup s$ is not extremal.
	\end{center}
\end{theorem}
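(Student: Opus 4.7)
The plan is to combine the hypergraph analysis sketched in the paragraph preceding the statement with the hypotenusal process of Section~\ref{sec:hypotenusal}, splitting into two cases depending on whether $S$ is itself extremal. The starting observation, common to both cases, is that since $H(S)$ has no $k$-edge, every $s\in S^c$ must properly contain some edge $e_i$ of $H(S)$ with $j=|e_i|<k$; by property~(P3) of the hypergraph representation, each of the $k-j\ge 1$ $(k-1)$-subsets of $s$ containing $e_i$ lies in $\Delta(S\cup\{s\})\setminus\Delta(S)$, so
\[
|\Delta(S\cup\{s\})|\;\ge\;|\Delta(S)|+(k-j)\;>\;|\Delta(S)|.
\]

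Suppose first that $S$ is extremal, and let $(a_0,\ldots,a_t)$ be the $k$-binomial decomposition of $|S|$. If $t+1<k$, then Theorem~\ref{thm:card} forces $S$ to be isomorphic to the initial colex segment, contradicting the hypothesis; hence $t+1=k$ and both $a_{k-2}$ and $a_{k-1}$ appear, which is precisely the situation analysed immediately before the statement. Passing from $|S|$ to $|S|+1$ leaves the Kruskal--Katona lower bound unchanged (either $\binom{a_{k-1}}{0}=\binom{a_{k-1}+1}{0}=1$ when $a_{k-1}+1<a_{k-2}$, or the telescoping cascade $\binom{a_{k-2}}{1}+\binom{a_{k-2}}{0}=\binom{a_{k-2}+1}{1}$ kicks in when equality holds, and similarly for deeper cascades). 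Therefore $|\Delta(S\cup\{s\})|>|\Delta(S)|=\mathrm{KK}(|S|)=\mathrm{KK}(|S|+1)$, and $S\cup\{s\}$ is not extremal. Suppose instead that $S$ is not extremal. By Theorem~\ref{thm:charac} we have $\beta_{k-1}(S)\le 0$, or equivalently, by Proposition~\ref{prop.translation} together with Remark~\ref{rmk:extremal_wall}, the terminal wall of the hypotenusal process on $(\mathcal{B}(S),0,0)$ satisfies $w_k(S)\ge n-k+1$. I would then compare the hypotenusal processes applied to $H(S)$ and to $H(S\cup\{s\})$: the latter is obtained from the former by deleting each edge of $H(S)$ contained in $s$ and inserting the newly exposed minimal missing sets (each strictly larger than some deleted edge, and none of them contained in $s$). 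Because the replacement edges contribute balls with delay and position at least as large as those of the deleted ones, and the $k-j$ extra shadow elements from the previous paragraph add at least one further active ball strictly to the left of the wall, the terminal wall cannot recede; thus $w_k(S\cup\{s\})\ge w_k(S)\ge n-k+1$, whence $\beta_{k-1}(S\cup\{s\})\le 0$ and $S\cup\{s\}$ is not extremal.

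The main obstacle is the edge-by-edge bookkeeping in the non-extremal case: since $H(S\cup\{s\})$ is in general not a subhypergraph of $H(S)$, Proposition~\ref{p.other_extremal_families} cannot be invoked directly, and each deleted edge $e_i\subseteq s$ of $H(S)$ must be matched against the new edges it exposes in $H(S\cup\{s\})$ — namely, the minimal $k$-sets $t\ne s$ with $e_i\subseteq t$, $t\notin S$, that are not dominated by a surviving smaller edge. Making this matching quantitative through the trees $T_e$ of Section~\ref{sec:tree_coded}, combined with the monotonicity of the shadow $k$-binomial decomposition (Theorem~\ref{thm:main}), should yield the desired lower bound $w_k(S\cup\{s\})\ge w_k(S)$ and thereby complete the proof.
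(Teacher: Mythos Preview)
Your treatment of the extremal case is correct and coincides with the paper's argument: the paper's entire proof is the paragraph immediately preceding the statement (the theorem is introduced with ``In particular, this argument shows the following two results''), and that paragraph does exactly what you do --- observe that any $s\in S^c$ properly contains an edge $e_i$ of size $<k$, so $\Delta(S\cup s)\supsetneq\Delta(S)$, and then use that when $a_{k-1}$ and $a_{k-2}$ both exist one has $KK(|S|+1)=KK(|S|)=|\Delta(S)|$.

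The gap is in your non-extremal case. Your key claim, that $w_k(S\cup\{s\})\ge w_k(S)$, is not established, and the justification you sketch does not hold up. Passing from $H(S)$ to $H(S\cup\{s\})$ \emph{deletes} every edge contained in $s$; deleting edges removes balls from the initial configuration and can only push the terminal wall \emph{left}. Your assertion that ``replacement edges contribute balls with delay and position at least as large as those of the deleted ones'' is precisely what would need to be proved, and there is no mechanism in the trees $T_e$ or in Proposition~\ref{p.other_extremal_families} that provides such a one-to-one domination (as you note, $H(S\cup\{s\})$ is not a subhypergraph of $H(S)$). The sentence about ``the $k-j$ extra shadow elements add[ing] at least one further active ball strictly to the left of the wall'' conflates the increment in $|\Delta(S\cup s)|$ with the ball configuration of $H(S\cup\{s\})$; these are related but not in the direct way you use. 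You correctly flag this as ``the main obstacle'', but the closing paragraph does not resolve it --- invoking Theorem~\ref{thm:main} gives monotonicity of the $\beta_i$ for each family separately, not a comparison between the two families.

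It is worth noting that the paper itself does not supply a separate argument for the non-extremal case: the paragraph it points to explicitly works with ``the extremal family'' and assumes the full $k$-binomial decomposition. So your Case~1 already matches the paper's proof in full; your Case~2 is an attempt to prove more than the paper actually argues.
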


If we use Theorem~\ref{thm:card} in order to refine Theorem~\ref{t.other_families3} we conclude the following.

\begin{theorem}
	\label{t.inest}
	Let $H$ be the hypergraph associated to the extremal family $S\subseteq \binom{[n]}{k}$  with support $P$.
	Let  $s\in \binom{P}{k}\setminus S$. Then the following holds.
	Let $e\in H$ be an edge of the hypergraph such that $e\subseteq s$. 
	
	If $|e|=k$, then $e$ is the unique edge in $H$ such that $e\subseteq s$, and $S\cup s$ is extremal.
	
	If $|e|<k$, then $S\cup s$ is extremal if and only if 
	\begin{itemize}
		\item $S$ is the initial colex segment, and $S\cup s$ is the initial colex segment,
		\item and $e$ has the maximal size among all the edges in $H(S)$. In particular, $H(S)$ has no edges of size $k$,
		\item and $e$ is the unique edge of $H(S)$ contained in $s$.
	\end{itemize} 
\end{theorem}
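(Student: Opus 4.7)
My plan splits according to whether $|e|=k$ or $|e|<k$. If $|e|=k$ then $e=s$, uniqueness of $e$ among edges of $H(S)$ inside $s$ follows immediately from the antichain property of $H(S)$, and since the hypergraph of $S\cup s$ is $H(S)\setminus\{e\}$ (a subhypergraph), Proposition~\ref{p.other_extremal_families} gives that $S\cup s$ is extremal. For $|e|<k$, the backward direction is immediate because initial colex segments are extremal.

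For the forward direction when $|e|<k$, I first show that $H(S)$ has no $k$-edges. Suppose to the contrary that $e^\star\in H(S)$ is a $k$-edge. Then $H(S)\setminus\{e^\star\}$ is the hypergraph of $S\cup\{e^\star\}$, so $S\cup\{e^\star\}$ is extremal by Proposition~\ref{p.other_extremal_families}; minimality of $e^\star$ as a $k$-edge forces all its $(k-1)$-subsets into $\Delta(S)$, so $|\Delta(S\cup\{e^\star\})|=|\Delta(S)|$. By Remark~\ref{rmk:k-binom_determines_shadow}, the two extremal families $S\cup\{e^\star\}$ and $S\cup s$ (both of cardinality $|S|+1$) have equal shadow sizes, hence $|\Delta(S\cup s)|=|\Delta(S)|$. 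But the $k-|e|\geq 1$ subsets of $s$ of size $k-1$ that contain $e$ are genuinely new in $\Delta(S\cup s)$, a contradiction. Hence $H(S)$ has no $k$-edges, and Theorem~\ref{t.other_families3} applied contrapositively then forces $S$ to be the initial colex segment, say $S=I(m)$.

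Next I match shadow increments. Let $e_{\max}$ be the largest-size edge of $H(S)$ contained in $s$. The increment $N:=|\Delta(S\cup s)|-|\Delta(S)|$ counts the $(k-1)$-subsets of $s$ containing some edge of $H(S)$, and a quick inclusion-exclusion (using that distinct edges in $s$ are incomparable) gives $N\geq k-|e_{\max}|$ with equality if and only if $e_{\max}$ is the unique edge of $H(S)$ in $s$. On the other hand, since $S\cup s$ is extremal, $N=\delta_k(m+1)-\delta_k(m)$; by the colex hypergraph structure of Section~\ref{sec:hypergraph_of_colex}, this increment equals $k-|e^\ast|$, where $e^\ast$ is the unique largest-size edge of $H(I(m))$ contained in the next colex set $s^\ast$ and $|e^\ast|$ equals the maximum edge size in $H(S)$. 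Combining $|e_{\max}|\geq|e^\ast|$ with the trivial $|e_{\max}|\leq|e^\ast|$ forces equality and uniqueness; hence $e=e_{\max}$ is of maximum size and unique in $s$. The isomorphism $S\cup s\cong I(m+1)$ then follows by applying an automorphism of $H(S)=H(I(m))$ that sends $s$ to the canonical extension $s^\ast$.

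The main obstacle will be the structural colex-increment claim invoked above: that $\delta_k(m+1)-\delta_k(m)=k-|e^\ast|$ with $e^\ast$ of maximum edge size in $H(I(m))$ and unique inside $s^\ast$. Verifying this requires carefully unpacking the tree-like structure of the colex hypergraph encoded in \eqref{eq:edges_in_colex} and handling the boundary cases that depend on the length of the $k$-binomial decomposition (in particular the transitions where the top level of the tree collapses). Once this structural fact is in place, the concluding isomorphism $S\cup s\cong I(m+1)$ reduces to a routine automorphism argument.
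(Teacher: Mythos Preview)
Your argument is correct. The paper's own justification is the single sentence ``If we use Theorem~\ref{thm:card} in order to refine Theorem~\ref{t.other_families3} we conclude the following,'' so the two routes differ mainly in how much is made explicit. For the forward direction with $|e|<k$, the paper's intended line is that adding $s$ strictly enlarges the shadow, whence $\delta_k(m+1)>\delta_k(m)$, whence $\ell(\a_m)<k$, and Theorem~\ref{thm:card} forces $S$ to be colex; you reach the same conclusion along a parallel track, first ruling out $k$-edges via the comparison with a hypothetical $S\cup\{e^\star\}$ and then invoking Theorem~\ref{t.other_families3}. Where you genuinely go further is on the remaining three claims (that $e$ has maximal size, is the unique edge in $s$, and $S\cup s$ is again colex): the paper leaves these implicit, and in the boundary case $\ell(\a_m)=k-1$, where $\ell(\a_{m+1})=k$, a second appeal to Theorem~\ref{thm:card} for $m+1$ is not available. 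Your shadow-increment matching $N\ge k-|e_{\max}|$ (with equality iff uniqueness) against the colex increment $k-|e^\ast|$, followed by the automorphism sending $s$ to the canonical next colex element $s^\ast$, supplies a complete argument. The increment formula you flag as the obstacle does hold and follows from the description in Section~\ref{sec:hypergraph_of_colex}: the $(m+1)$-th colex element $s^\ast$ contains a unique edge $e^\ast$ of $H(I(m))$, necessarily of the maximal size $\ell(\a_m)$, and both $s^\ast\setminus e^\ast$ and (for any admissible $s$) $s\setminus e$ lie among the free vertices $[a_t]$ of $H(I(m))$; this makes both the formula and the concluding automorphism routine.
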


\noindent Let us now give a reinterpretation of several results in the context of adding or substracting elements to an extremal family.

\begin{theorem} \label{thm:extending_families}
	Let $S$ be an extremal family and let $a_{k-1}$ denote, if it exists, the last binomial numerator of the $k$-binomial decomposition of $|S|$.
	Then:
	\begin{enumerate}
	 \item \label{en:short1} $S$ is maximal with respect to the inclusion (any set being added to $S$ makes $S$ not extremal) if and only if
	the hypergraph of $S$ does not have any edge of size $k$ and is not the initial segment in the colex order.
	\item \label{en:short2} $S$ is minimal with respect to the inclusion (any set being removed from $S$ makes $S$ not extremal) if and only if $a_{k-1}=1$ and $S$ cannot be seen as the initial segment of the colex order with one set added.
	\item \label{en:short3} There is a sequence of the elements in $\binom{[n]}{k}\setminus S$, $\{s_1,\ldots,s_m\}$ for which all $\{S\cup \{s_1,\ldots,s_i\}\}_{i\in[m]}$ are extremal if and only if $\Delta(S)$ is the initial segment in the colex order (equivalently, if $H^{(k-1)}$ is the hypergraph of the initial segment in the colex order).
	\item \label{en:short4} There is a sequence of the elements in $S$, $\{s_1,\ldots,s_m\}$ for which all $\{S\setminus \{s_1,\ldots,s_i\}\}_{i\in[m]}$ are extremal if and only if $a_{k-1}\geq 1$ and $S$ is obtained from an initial segment of the colex order by adding $a_{k-1}$ edges.
	\end{enumerate}
\end{theorem}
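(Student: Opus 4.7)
My plan is to handle the four items in sequence, drawing on Theorem~\ref{t.inest} for the two addition statements and combining Theorem~\ref{thm:card} with \cite[Example~2.4]{furgri86} for the two removal statements.

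Item~\ref{en:short1} is essentially a direct consequence of Theorem~\ref{t.inest}: no $s\in\binom{[n]}{k}\setminus S$ produces an extremal extension iff neither case of Theorem~\ref{t.inest} applies, i.e., $H(S)$ contains no $k$-edge (the case $|e|=k$ is void) and $S$ is not the initial colex segment (the case $|e|<k$ is void). For item~\ref{en:short3}, the forward direction (assuming $\Delta(S)$ is the initial colex segment) proceeds in two stages. First I add the $k$-edges of $H(S)$ one by one: each such addition is extremal by Theorem~\ref{t.inest} and leaves $\Delta$ unchanged, since the $(k-1)$-subsets of a $k$-edge of $H(S)$ already lie in $\Delta(S)$. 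The resulting $S_0$ has no $k$-edges in $H(S_0)$, while $H(S_0)^{(k-1)}=H(\Delta(S_0))$ by Lemma~\ref{lem:hyp_shadow} is the colex hypergraph of a colex family; since the family is determined by its hypergraph as the collection of $k$-sets avoiding all the edges, $S_0$ coincides with $I_{n,k}(m'')$ for some $m''$. I then append colex-next elements via the $|e|<k$ case of Theorem~\ref{t.inest} until reaching $\binom{[n]}{k}$. For the converse, Theorem~\ref{t.inest} forces every addition to be either a $k$-edge of the current hypergraph (preserving $\Delta$) or a colex-next addition (requiring the current family to already be colex). If $t$ is the first non-$k$-edge step, the family at step $t$ is colex, so $\Delta(S)=\Delta(S\cup\{s_1,\ldots,s_t\})$ is also colex; if no such $t$ exists, $\Delta(S)=\binom{[n]}{k-1}$ and is trivially colex.

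For item~\ref{en:short2}, the direction ``not minimal $\Rightarrow$ ($a_{k-1}>1$ or $S$ is an initial colex with one set added)'' is immediate: if $a_{k-1}>1$, \cite[Example~2.4]{furgri86} supplies a removable element; if $S$ is an initial colex plus one set, removing that set yields the colex. For the converse, assume $a_{k-1}=1$ and $S$ is not an initial colex plus one set, and suppose some $e\in S$ makes $S\setminus\{e\}$ extremal. Then $|S|-1$ has $k$-binomial decomposition of length $k-1$ (the $\binom{1}{1}$ term is dropped), so by the $\ell(\a)<k$ uniqueness regime of Theorem~\ref{thm:card} we get $S\setminus\{e\}=I_{n,k}(|S|-1)$; hence $S=I_{n,k}(|S|-1)\cup\{e\}$, contradicting the assumption. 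Thus $S$ is minimal.

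For item~\ref{en:short4}, the $\Leftarrow$ direction is explicit: write $S=I_{n,k}(m'')\cup J$ with $|J|=a_{k-1}$, remove the elements of $J$ one by one (each removal keeps $\Delta$ equal to $\Delta(I_{n,k}(m''))$, so the size stays exactly on the Kruskal--Katona bound), and then remove the elements of $I_{n,k}(m'')$ in reverse colex order. For $\Rightarrow$, I induct on $a_{k-1}$. Item~\ref{en:short2} shows the first removal requires either $S$ to be a colex plus one set (so $a_{k-1}=1$ and the claim holds directly) or $a_{k-1}>1$; in the latter case $S\setminus\{s_1\}$ has last coefficient $a_{k-1}-1\geq 1$ and inherits a shorter removal sequence, so inductively $S\setminus\{s_1\}=I_{n,k}(m'')\cup J'$ with $|J'|=a_{k-1}-1$, and adding $s_1$ back gives $S=I_{n,k}(m'')\cup(J'\cup\{s_1\})$ with $|J'\cup\{s_1\}|=a_{k-1}$. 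The main obstacle I expect is verifying carefully that in item~\ref{en:short3} the hypergraph with no $k$-edges and colex lower structure really forces $S_0$ to be a genuine initial colex segment of $\binom{[n]}{k}$ (rather than merely a family with the right shadow), since this is precisely what licenses the inductive reconstruction in item~\ref{en:short4}.
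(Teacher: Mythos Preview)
Your approach matches the paper's: items~\ref{en:short1} and \ref{en:short3} are deduced from Theorem~\ref{t.inest}, and items~\ref{en:short2} and \ref{en:short4} from Theorem~\ref{thm:card} together with the observation that removing a single set drops $a_{k-1}$ by one. Your write-up is considerably more detailed than the paper's four-line proof, and the extra detail is sound.

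Two small remarks. First, the ``main obstacle'' you flag in item~\ref{en:short3} is not an obstacle: once $H(S_0)$ has no $k$-edges, $H(S_0)=H(S_0)^{(k-1)}=H(\Delta(S_0))=H(\Delta(S))$, and since a family is uniquely determined by its hypergraph, $S_0$ is the $k$-family whose hypergraph is the colex hypergraph $H_C(n_1,\ldots,n_{k-1})$, which is precisely an initial colex segment of $\binom{[n]}{k}$ (see the description in Section~\ref{sec:hypergraph_of_colex}). Second, in item~\ref{en:short4}~$\Leftarrow$ your parenthetical ``each removal keeps $\Delta$ equal to $\Delta(I_{n,k}(m''))$'' is not quite right: while removing elements of $J$ one by one, the shadow stays equal to $\Delta(S)$ (which has one more element than $\Delta(I_{n,k}(m''))$) until the very last element of $J$ is removed. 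The extremality conclusion is unaffected, since the Kruskal--Katona bound for the intermediate sizes $|S|-i$ (with $1\le i<a_{k-1}$) is exactly $|\Delta(S)|$.
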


\begin{proof}
	\ref{en:short1} and \ref{en:short3} follow from Theorem~\ref{t.inest}, the latter by observing that, after removing all the edges of size $k$ from the hypergraph, then either the family is the initial segment in the colex order, or will stop being extremal.
	
	Parts \ref{en:short2} and \ref{en:short4} follow similarly but from Theorem~\ref{thm:card} in combination with the fact that removing an edge removes one unit from $a_{k-1}$, the last element in the the binomial decomposition of $|S|$
\end{proof}

In Section~\ref{sec:maximal_chains} there is a further discussion on maximal chains of extremal families in the poset of families ordered by inclusion.

\subsection{Any family can be an induced family of an extremal family}

Theorem~\ref{thm:solid_part} follows from the arguments of Proposition~\ref{prop.translation} and Theorem~\ref{thm:charac}. Indeed, observe that the edges of the hypergraph of the family $S'$ from \eqref{eq:family_ext} are the same as those for $S$, yet the hypergraph of $S$ has $[n]$ as its vertex set, while the hypergraph of $S'$ has $[n+r]$. Now, the arguments of Proposition~\ref{prop.translation} show that, if the shadow $k$-binomial decomposition of $S$ is $(b_0,\ldots,b_{k-1})$, then the shadow $k$-binomial decomposition of $S'$ is equal to $(b_0+r,\ldots,b_{k-1}+r)$. Therefore, for $r$ large enough $b_{k-1}+r\geq 1$ and then Theorem~\ref{thm:charac} gives that $S'(r)$ is extremal, as the statement of Theorem~\ref{thm:solid_part} claims.

\subsection{Depth of a family}


The depth of the family is the minimal index $j$ for which $\Delta^j(S)$ is the initial segment in the colex order. Since $\Delta^{k-1}(S)$ is a family of singletons, it is the initial segment of the colex order, and the shadow of the initial segment in the colex order is again the initial segment of the colex order, such $j$ is well defined. Thus, 
in a family $S$ of depth $j$, $\Delta^j(S)$ is the initial segment in the colex order, while $\Delta^{j-1}(S)$ is not (we understand that $\Delta^{-1}(S)$ does not exists when $j=0$), and in any supercomfortable ordering the order of the edge $e_{\text{sc}(E(H))+1}$ is $k-j+1$ (here we are considering that the edge $e_{\text{sc}(E(H))+1}$ does not exists if $S$ is the initial segment in the colex order); further, if the family has $H$ as its hypergraph, then $H^{(k-j)}$ is the hypergraph of an initial segment in the colex order, while $H^{(k-j+1)}$ is not (here we are assuming that either $j>0$ or, if that is not the case, we understand that the hypergraph $H^{(k+1)}$ does not exists).

\begin{proposition}[Depth and hypotenusal numbers]\label{p.shad_hypo}
	Let $S\subseteq \binom{[n]}{k}$ be a family of $k$-sets with depth $j\geq 1$, and let
	$\binom{b_0}{k}+\cdots+\binom{b_{k-1}}{1}=|S|$
	be the shadow $k$-binomial decomposition of $S$.
	Then we have
	\[
	\begin{cases}
	b_{k-j+i}-b_{k-1-j+i}\geq a[i]+1 &\text{ if } 0\leq i <j-1 \\
	b_{k-j+i}-b_{k-1-j+i}\geq a[i] &\text{ if } i=j-1
	\end{cases}
	\]
	or
	\[
	\begin{cases}
	b_{k-j+i}-b_{k-1-j+i}\geq a[i-1]+1 &\text{ if } 0\leq i <j-1 \\
	b_{k-j+i}-b_{k-1-j+i}\geq a[i-1] &\text{ if } i=j-1
	\end{cases}
	\]
	where $a[i]$ are the hypothenusal numbers, and $a[-1]=1$. There are examples where the bounds are tight.
\end{proposition}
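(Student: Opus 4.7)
The plan is to translate the claimed inequalities into the language of wall positions in the hypotenusal process and then invoke the hypotenusal growth estimate of Section~\ref{sec:hypotenusal}. Using Theorem~\ref{thm:hypotenusal}, the shadow $k$-binomial coefficients satisfy $b_\ell=n-w_{\ell+1}-(\ell+1)$ for $0\le\ell\le k-2$ and $b_{k-1}=n-w_k-(k-1)$, so a direct computation gives the consecutive gaps $b_{\ell-1}-b_\ell=w_{\ell+1}-w_\ell+1$ for $\ell\le k-2$ and $b_{k-2}-b_{k-1}=w_k-w_{k-1}$. Consequently, both variants of the claim reduce to the single wall-displacement bound
\[
w_{k-j+i+1}-w_{k-j+i}\;\ge\;a[i],\qquad 0\le i\le j-1,
\]
the extra $+1$ appearing in the interior cases being absorbed by the baseline advance of one unit per iteration produced by the split \eqref{eq:break_wall} of the leading binomial.

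Next, I would use the depth hypothesis to seed the process with an essential ball. Order the edges of $H(S)$ supercomfortably and run the hypotenusal process on $(\mathcal{B}(S),0,0)$. Since the edges of $H^{(k-j)}$ form the hypergraph of an initial colex segment, through iteration $k-j$ the process replicates the colex trajectory and the wall advances canonically. At iteration $k-j+1$, by Claim~\ref{cl:edges_in_non_colex}, one of the offending edges of size $k-j+1$ contributes, via its tree $T_{e_{\mathrm{sc}+1}}$ and the dictionary in Observation~\ref{obs:coeff}, either a ball at a position strictly smaller than the colex prescription (in cases \ref{exA}, \ref{exb.1}, \ref{exb.2.1}) or an extra ball of one higher delay placed at the colex position (in case \ref{exb.2.2}). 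A short case analysis then shows that at the start of iteration $k-j+1$ the configuration contains a ball at position $w_{k-j+1}-1$ with delay counter $k-j+1$, i.e.\ immediately to the left of the wall.

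Third, I would feed this into the hypotenusal growth lemma of Section~\ref{sec:hypotenusal}: whenever at the beginning of iteration $i_0$ the configuration contains a ball $(w_{i_0}-1,i_0)$, then $w_{i_0+t+1}-w_{i_0+t}\ge a[t]$ for all $t\ge 0$. Setting $i_0=k-j+1$ and letting $t=i$ range over $0,\ldots,j-1$ yields exactly the wall-displacement bound above, completing the main inequalities. The main obstacle I anticipate is the uniform treatment of the four subcases of Claim~\ref{cl:edges_in_non_colex}: cases \ref{exA}, \ref{exb.1}, \ref{exb.2.1} directly produce a ball with support strictly smaller than the colex one and feed the growth lemma without modification, whereas subcase \ref{exb.2.2} requires the auxiliary observation that a pair of adjacent balls with consecutive delays produces a descendant cascade that dominates that of a single left-shifted ball, after which the same estimate applies.

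For the tightness claim, I would construct, for each admissible pair $(k,j)$, a family whose hypergraph consists of a colex spine together with a single minimally added offending edge of size $k-j+1$ realizing case \ref{exA}; a direct inspection of the resulting hypotenusal trajectory reproduces the defining recursion of the hypotenusal numbers exploited in Claims~\ref{cl.2} and~\ref{cl.3}, attaining equality in every inequality of the proposition. Such a family is in the spirit of Construction~A referenced in Section~\ref{sec:cons_a_b}.
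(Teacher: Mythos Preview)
Your translation of the gaps $b_{\ell-1}-b_\ell$ into wall displacements is correct and matches the paper's strategy. However, the reduction you make next is where the argument breaks: you assert that \emph{both} alternatives in the proposition reduce to the single bound $w_{k-j+i+1}-w_{k-j+i}\ge a[i]$, and then attempt to prove this uniformly. The two alternatives in the statement are joined by a disjunction precisely because in subcase~\ref{exb.2.2} only the weaker bound $w_{k-j+i+1}-w_{k-j+i}\ge a[i-1]$ is attainable. Your ``auxiliary observation'' that a pair of balls at position $t$ with consecutive delays dominates a single ball at position $t-1$ is false. Take the paper's own tightness witness $H=\{(1,2),(3,4,5)\}$: one computes the wall sequence $w_2=1$, $w_3=2$, $w_4=3$, $w_5=4$, $w_6=6$, so that $w_5-w_4=1<a[2]=2$; the first alternative fails at $i=2$ while the second alternative holds with equality throughout. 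Your proposed bound $w_{k-j+i+1}-w_{k-j+i}\ge a[i]$ is therefore violated.

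The fix is exactly what the paper does: keep the dichotomy. In cases \ref{exA}, \ref{exb.1}, \ref{exb.2.1} the offending edge produces a ball with blocking set of size strictly less than $t$, hence strictly to the left of the wall at iteration $k-j+1$; this ball has descendants and seeds the hypotenusal growth from that iteration, yielding the $a[i]$ alternative. In case \ref{exb.2.2} the ball at position $t$ with delay $k-j+1$ is annihilated by the wall without descendants (since it sits exactly at the wall), but the companion ball at position $t$ with delay $k-j+2$ survives one step to the left of the wall at iteration $k-j+2$; this seeds the growth one iteration late and yields only the $a[i-1]$ alternative. For tightness the paper exhibits the two small hypergraphs $\{(1,2),(1,3),(2,3,4)\}$ and $\{(1,2),(3,4,5)\}$ rather than invoking Construction~A.
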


That is, from the moment where the shadows $\Delta^{k-1}(S),\ldots,\Delta^{k-i}(S),\ldots,\Delta^1(S),S$ stop being the initial segment in the colexicographical order, then the coefficients of the $k$-binomial decomposition decrease by, at least, their corresponding hypotenusal number.

\begin{proof}[Proof of Proposition~\ref{p.shad_hypo}]
Since the family of sets has depth $j\geq 1$, $S$ is not the colex, $H^{(k-j)}$ is the hypergraph of an initial segment in the colex order, while $H^{(k-j+1)}$ is not. Let $t=\text{sc}(E(H))$, thus $\{e_1,\ldots,e_t\}$ induces the initial segment of the colex order. Then $e_t$ has size $\leq k-j+1$ and the size of $e_{t+1}$ is $=k-j+1$. Using Claim~\ref{cl:edges_in_non_colex} and Observation~\ref{obs:coeff} in the argument for the proof of Theorem~\ref{thm:card} in Section~\ref{sec:alt_to_mtfg}, we conclude that
the tree $T_{e_{t+1}}$ of $B_{t+1}$ induces either a binomial coefficient
of the type $\binom{n-|e_{t+1}|-|s|}{k-|e_{t+1}|}$ with $|s|<t$,
or at least two binomial coefficients $\binom{n-|e_{t+1}|-(t)}{k-|e_{t+1}|}$.
In the first case, in the bin-ball-wall process given using the translation given by Observation~\ref{obs:dict_bbw-sbd}, the ball induced by $\binom{n-|e_{t+1}|-|s|}{k-|e_{t+1}|}$ has some descendants (as $|s|<t$), while the wall is to the right of the wall in the first iteration of the configuration giving the hypothenusal numbers Lemma~\ref{lem:hip_num}.
In particular, and since the process does not end abruptly by Proposition~\ref{prop.translation}\ref{en:prop_trans_1} we can first process the balls descendants from $\binom{n-|e_{t+1}|-|s|}{k-|e_{t+1}|}$, and keep special track of those descendants that would appear if the wall would have been at the position given by the hypothenusal numbers. In particular, we observe that there are always at least as many balls as those in Lemma~\ref{lem:hip_num} as the wall in the corresponding process is consistently  to the right of position of the wall in the Lemma~\ref{lem:hip_num}; thus the claim with $a[i]$ follows in this case.
When there are the terms $\binom{n-|e_{t+1}|-(t)}{k-|e_{t+1}|}$ and 
$\binom{n-(|e_{t+1}|+1)-(t)}{k-(|e_{t+1}|+1)}$, the first term forces the wall to drop by a unit, while the second term gives the hypothenusal numbers using the same argument as before from the following iteration onwards, thus we obtain the instance $a[i-1]$ instead of $a[i]$.

The bounds are achieved with equality with the family given by the set of hyperedges $\{(1,2),(3,4,5)\}$ for the second case, and by the $\{(1,2),(1,3),(2,3,4)\}$ in the first case.
\end{proof}

Now Theorem~\ref{thm:hn} follows from Proposition~\ref{p.shad_hypo} using the example of the hypergraph $\{(1,2),(3,4,5)\}$ for any $k$ while $[n]$ being large enough.

Combining the following statements: Proposition~\ref{p.shad_hypo}, and the lower bound \eqref{eq.hyp_all} (which gives a rate of descend on the binomial coefficients of the shadow $k$-binomial decompositions), and the characterization Theorem~\ref{thm:charac} (that claims that $b_{k-1}\geq 1$ if $S$ is to be extremal), and Proposition~\ref{prop.translation}\ref{en:prop_trans_2}, we obtain the following:

\begin{theorem}[Bound on the depth]\label{thm:bound_depth}
	Let $S\subseteq \binom{[n]}{k}$ a family of $k$-sets. Assume that $S$ is extremal and has depth $j$. 
	Then, $j\leq \max\{\log_2(\log_2(n))+4,5\}$.
\end{theorem}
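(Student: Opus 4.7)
The plan is to combine the monotone descent of the shadow $k$-binomial decomposition with the doubly-exponential growth of the hypotenusal numbers. Let $S \subseteq \binom{[n]}{k}$ be extremal of depth $j$, and let $(b_0, b_1, \ldots, b_{k-1})$ be its shadow $k$-binomial decomposition. By Theorem~\ref{thm:charac}, extremality gives $b_{k-1} \geq 1$. On the other side, Proposition~\ref{prop.translation}\ref{en:prop_trans_2} writes $b_0 = n - w_1 - 1$ where $w_1 \geq 0$ is the position of the wall after the first iteration, so $b_0 \leq n - 1$. The goal is then to show that these two endpoint constraints, together with a large forced descent between them, leave no room for $j$ to be bigger than $\log_2\log_2 n + 4$.

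The forced descent comes from Proposition~\ref{p.shad_hypo}. Since $j$ is the depth, for $0 \leq i \leq j-1$ consecutive coefficients satisfy $b_{k-j-1+i} - b_{k-j+i} \geq a[i-1]$ (taking the weaker of the two cases provided by the proposition, which is enough). Summing these telescoping differences would work, but in fact the single largest term already suffices: $b_0 - b_{k-1} \geq b_{k-j-1} - b_{k-j} \geq a[j-2]$, valid whenever $j \geq 2$. Combined with the endpoint bounds this yields $a[j-2] \leq b_0 - b_{k-1} \leq n - 2$.

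To turn this into a bound on $j$, I would invoke the lower bound~\eqref{eq.hyp_all}, $a[i] \geq 2^{2^{i-2}+1}$ for $i \geq 4$. Setting $i = j-2$ and assuming $j - 2 \geq 4$, i.e.\ $j \geq 6$, gives $2^{2^{j-4}+1} \leq a[j-2] \leq n$, hence $2^{j-4} \leq \log_2 n - 1 \leq \log_2 n$, and finally $j - 4 \leq \log_2 \log_2 n$, i.e.\ $j \leq \log_2 \log_2 n + 4$. For the remaining range $j \leq 5$, the bound $\max\{\log_2\log_2 n + 4, 5\}$ is automatic. Splicing these two regimes together finishes the argument.

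I do not anticipate a serious obstacle: the work has already been done in Proposition~\ref{p.shad_hypo} and in the double-exponential lower bound~\eqref{eq.hyp_all}, and the present theorem is essentially their quantitative packaging. The only mild care needed is bookkeeping the shift between $a[i]$ and $a[i-1]$ in Proposition~\ref{p.shad_hypo} (the worse case of the two alternatives) and verifying the trivial low-$j$ cases so that the $\max$ with $5$ absorbs them; neither step requires a new idea.
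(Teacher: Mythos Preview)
Your proof is correct and follows exactly the route the paper itself takes: combine $b_{k-1}\ge 1$ (Theorem~\ref{thm:charac}), $b_0\le n-1$ (Proposition~\ref{prop.translation}\ref{en:prop_trans_2}), the gap bound from Proposition~\ref{p.shad_hypo}, and the doubly-exponential lower bound~\eqref{eq.hyp_all}. One small slip: in the displayed chain ``$b_0-b_{k-1}\ge b_{k-j-1}-b_{k-j}\ge a[j-2]$'' the middle term should be $b_{k-2}-b_{k-1}$ (the case $i=j-1$, not $i=0$, of Proposition~\ref{p.shad_hypo}); with that correction the chain reads $b_0-b_{k-1}\ge b_{k-2}-b_{k-1}\ge a[j-2]$ and the rest goes through unchanged.
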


Theorem~\ref{thm:depth} follows immediately from Theorem~\ref{thm:bound_depth}.

Theorem~\ref{thm:depth3} follows by observing that if $\Delta^4(S)$ is not extremal, then there exists a ball $b$ with descendants for at least three iterations of the bin-balls-wall process. In the first iteration $i$ after the ball creates a descendant (or is not deleted by the wall), if the separation of $a_i-a_{i-1}$ is linear in $n$, it means that such ball creates a linear number of descendants (indeed, process that ball the last one on that iteration), hence it becomes a quadratic number of balls in the next iteration, and thus it reflects in the fact that the third iteration needs a separation which should be quadratic in $a_i-a_{i-1}$ (just by the balls that are descendats from $b$); since the  proportion of binomial decompositions such that $a_{k-1}-a_{k-2}$ is quadratic in $a_{k-3}-a_{k-4}$ (and $a_{k-2}-a_{k-3}$ is linear in $a_{k-3}-a_{k-4}$) goes to zero as $n\to \infty$, Theorem~\ref{thm:depth3} follows.


\section{Extremal families with prescribed depth} \label{sec:ext_fam_prescribed_depth}

Theorem~\ref{thm:card} shows that, for certain binomial decompositions, the only extremal family is the initial segment in the colex order. In the previous section we have examined the role of the depth in placing restrictions on consecutive coefficients of the shadow $k$-binomial Proposition~\ref{p.shad_hypo}, and some of its consequences as Theorem~\ref{thm:bound_depth}. The following question then becomes natural:
\begin{quote}
If $S\subset\binom{[n]}{k}$ has depth $j$, and is extremal, which cardinalities can it have?
\end{quote}
We solve the decision problem in Theorem~\ref{thm:decision}, which implies Theorem~\ref{thm:alg}, in the following way:
\begin{quote}
	If $S$ is an extremal family in $\binom{[n]}{k}$, of size $m$ and with depth $j$, then either Construction~A or Construction~B (which follow the corresponding restrictions on the edges given in Claim~\ref{cl:edges_in_non_colex} and are given in Section~\ref{sec:cons_a_b}) produce an extremal family with the same cardinality $m$ and the same depth $j$. Both of these constructions can be found using a reasonably fast algorithm.
	\end{quote}
In particular, if we are given a $n$, $k$ and $j$ and a $k$-binomial decomposition $\a$, we can decide the existence of an extremal family by trying  Construction~A and Construction~B; if both fail to give an extremal family, then no extremal family with those characteristics exists.

\subsection{Constructions generalizing the colex} \label{sec:cons_a_b}

The constructions $A/A'$, and $B/B'$ below are given using the hypergraphs of the families and are inspired by the dichotomy regarding the hypergraph given in Claim~\ref{cl:edges_in_non_colex}.

Recall that $H_C(n_1,\ldots,n_k)$, as introduced in Section~\ref{sec:hypergraph_of_colex}, denotes the hypergraph of the initial segment in the colex order with $n_i$ edges of size $i$; in particular, $n_1+\cdots+ n_t\leq n-t+1$ for each $t\leq k$; we extend this notation so that $H(n_1,\ldots,n_k)$ signifies a graph with $n_i$ edges of size $i$.
In all the cases below we use a comfortable or supercomformatable ordering on the edges of the hypergraphs, so $e_i\leq e_j\iff |e_i|\leq |e_j|$. Since we want these graphs to give families with prescribed depth $j$, thus $H^{(k-j+1)}$ is the first hypergraph not being the colex, we should have at least one edge of size $k-j+1$, thus $n_{k-j+1}\geq 1$.

Throughout this section, we shall use depth $k-j$ instead of $j$, when focussing on the hypergraph constructions.

\paragraph{Construction $B$ and $B'$.} 
Consider the notation of the vertices from Section~\ref{sec:hypergraph_of_colex} for $H_C(n_1,\ldots,n_k)$. Assume that $n_{j+1}\geq 1$ and that $n_1+\ldots+n_{j+1}\geq 2$,\footnote{We want $H_{B,j}^{(j)}(n_1,\ldots,n_k)$ to be the hypergraph of the initial segment of the colex order, while $H_{B,j}^{(j+1)}(n_1,\ldots,n_k)$ is not; equivalently, that the family has depth $k-j$. This implies that the assumption $n_{j+1}\geq 1$ and $n_1+\ldots+n_{j+1}\geq 2$ is adecuate, for otherwise the family cannot have such depth.} this implies that on the subhypergraph $H_C^{(j+1)}(n_1,\ldots,n_k)$ with edges $\{e_1,\ldots,e_{n_1+n_2+\ldots+n_{j+1}}\}$, there exists a vertex $v_s$ with $1\leq s\leq j$ with degree $\geq 2$: let $t$ be the largest index for a vertex $v_t$ with such property.

The hypergraph of the \emph{Construction B}, $H_{B,j}(n_1,\ldots,n_k)$, is then obtained from $H_C(n_1,\ldots,n_k)$ by
exchanging the vertex $v_{t}$ by $v_{j+1}$ in the edge $e_{n_1+n_2+\ldots+n_{j+1}}$.
The edges of $H_{B,j}$ are supercomfortably ordered and $H_{B,j}^{(j+1)}(n_1,\ldots,n_t)$ satisfies the condition \ref{exB} in Claim~\ref{cl:edges_in_non_colex}.

Given $r+1\in [n_1+\ldots+n_{j}+1,n_1+\ldots+n_{j+1}]$, and  $t$ the largest index of a vertex $v_t$ that has degree $\geq 2$ in the subhypergraph of $H_C(n_1,\ldots,n_k)$ induced by the edges $\{e_1,\ldots,e_r,e_{r+1}\}$ (similar as before). Then the hypergraph of the \emph{Construction $B'$}, $H_{B',j,r}\allowbreak (n_1,\ldots,\allowbreak n_k)$, is obtained from  $H_C(n_1,\ldots,n_k)$ by
exchanging the vertex $v_{t}$ by $v_{j+1}$ in the edge $e_{r+1}$.
The edges of $H_{B',j,r}\allowbreak (n_1,\ldots,\allowbreak n_k)$ are comfortably (but, if $r+1\neq n_1+\ldots+n_{j+1}$ not supercomfortably) ordered. Upon reordering the edges of size $j+1$ so that $e_{r+1}$ becomes the last one of its size, then $H_{B',j,r}^{(j+1)}\allowbreak (n_1,\ldots,\allowbreak n_k)$ satisfies \ref{exB} in Claim~\ref{cl:edges_in_non_colex}. When $r+1= n_1+\ldots+n_{j+1}$ Construction $B$ and Construction $B'$ coincide.

The families induced by the constructions $H_{B,j}(n_1,\ldots,n_k)$ and $H_{B',j,r}\allowbreak (n_1,\ldots,\allowbreak n_k)$ have depth $k-j$.

\paragraph{Construction $A$ and $A'$.}
As before, let $t$ the largest index of a vertex $v_t$ with degree $\geq 2$ in the subhypergraph of $H_C(n_1,\ldots,n_k)$ induced by $\{e_1,\ldots,e_{n_1+\ldots+n_{j+1}}\}$.

The hypergraph of the \emph{Construction $A$}, $H_{A,j}(n_1,\ldots,n_k)$,  is obtained from $H_C(n_1,\ldots,n_k)$ by, in the edge $e_{n_1+n_2+\ldots+n_{j+1}}$, exchanging the vertex $v_{t}$ by $u_{n_1+\ldots+n_{j+1}-1}$, and the vertex $u_{n_1+\ldots+n_{j+1}}$ by $v_{j+1}$.
$H_{A,j}^{(j+1)}(n_1,\ldots,n_t)$ satisfies the condition \ref{exA} from Claim~\ref{cl:edges_in_non_colex}.\footnote{The vertex $u_{n_1+\ldots+n_{j+1}-1}$ is the unique vertex covered by the edge $e_{n_1+\ldots+n_{j+1}-1}$ in the initial segment of the colex order, and this is not an edge of size $1$, for otherwise $H_{A,j}^{(j+1)}$ would contain only one edge of size strictly larger than one, which would mean it is the initial segment in the colex order. Observe also that $e_{n_1+\ldots+n_{j+1}-1}$ has all the vertices of degree $\geq 2$ in $H_{A,j}(n_1,\ldots ,n_k)$.}

Given $r+1\in [n_1+\ldots+n_{j}+1,n_1+\ldots+n_{j+1}]$, and  $t$ the largest index of a vertex $v_t$ that has degree $\geq 2$ in the subgraph of $H_C(n_1,\ldots,n_k)$ induced by the edges $\{e_1,\ldots,e_r,e_{r+1}\}$ (similar as before). Then
the hypergraph of the family for the \emph{Construction $A'$}, $H_{A',j,r}\allowbreak(n_1, \ldots,\allowbreak n_k)$, is obtained from $H_C(n_1,\ldots,n_k)$ by exchanging the vertex $v_{t}$ by $u_{r}$, and the vertex $u_{r+1}$ by $v_{j+1}$ in the edge $e_{r+1}$.
The edges of $H_{B',j,r}\allowbreak (n_1,\ldots,\allowbreak n_k)$ are comfortably (but, if $r+1\neq n_1+\ldots+n_{j+1}$ not supercomfortably) ordered. Upon reordering the edges of size $j+1$ so that $e_{r+1}$ becomes the last one, then $H_{A',j,r}^{(j+1)}\allowbreak (n_1,\ldots,\allowbreak n_k)$ satisfies \ref{exA} in Claim~\ref{cl:edges_in_non_colex}. When $r+1= n_1+\ldots+n_{j+1}$, Construction $A$ and Construction $A'$ coincide.

The families induced by the constructions $H_{A,j}(n_1,\ldots,n_k)$ and $H_{A',j,r}\allowbreak (n_1,\ldots,n_k)$ have depth $k-j$.

See Figure~\ref{fig:hyp_example} for an example of Construction $A$ and Construction $B$.
The families $H_{A',\cdot}$ and $H_{B',\cdot}$ are generalizations of the $A$ and $B$ that allows to partition the argument of Theorem~\ref{thm:decision} in 
two smaller steps.

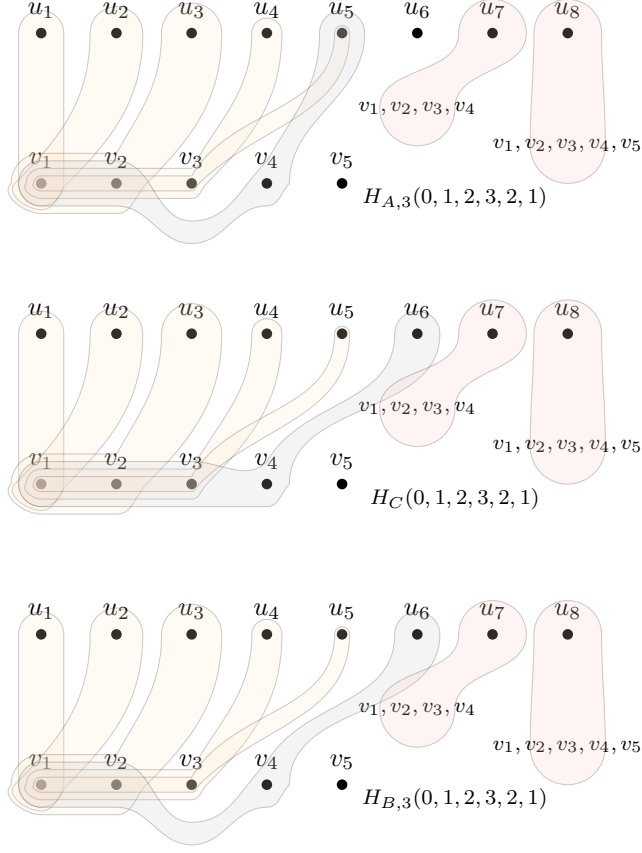
\begin{figure}[htb]
	\centering
\begin{tikzpicture}
\tikzstyle{vertex}=[circle,fill=black!100,minimum size=7pt,inner sep=0pt]
\tikzstyle{vertex2}=[circle,fill=black!100,minimum size=4pt,inner sep=0pt]

\node[vertex2,label={above:$u_1$}] (u11) at (-10,4) {};
\node[vertex2,label={above:$u_2$}] (u21) at (-9,4) {};
\node[vertex2,label={above:$u_3$}] (u31) at (-8,4) {};
\node[vertex2,label={above:$u_4$}] (u41) at (-7,4) {};
\node[vertex2,label={above:$u_5$}] (u51) at (-6,4) {};
\node[vertex2,label={above:$u_6$}] (u61) at (-5,4) {};
\node[vertex2,label={above:$u_7$}] (u71) at (-4,4) {};
\node[vertex2,label={above:$u_8$}] (u81) at (-3,4) {};

\node[] (e11) at (-5,3) {{\footnotesize $v_1,v_2,v_3,v_4$}};

\node[] (e21) at (-3,2.5) {{\footnotesize $v_1,v_2,v_3,v_4,v_5$}};

\node[] (n1) at (-4.5,1.8) {{\footnotesize $H_{A,3}(0,1,2,3,2,1)$}};

\node[vertex2,label={above:$v_1$}] (v11) at (-10,2) {};
\node[vertex2,label={above:$v_2$}] (v21) at (-9,2) {};
\node[vertex2,label={above:$v_3$}] (v31) at (-8,2) {};
\node[vertex2,label={above:$v_4$}] (v41) at (-7,2) {};
\node[vertex2,label={above:$v_5$}] (v51) at (-6,2) {};


\filldraw[fill=orange!20!white,opacity=0.2] ($(u11)+(-0.3,0)$) 
to[out=270,in=90] ($(v11) + (-0.3,0)$) 
to[out=270,in=180] ($(v11) + (0,-0.3)$)
to[out=0,in=270] ($(v11) + (0.3,0)$)
to[out=90,in=270] ($(u11) + (0.3,0)$)
to[out=90,in=0] ($(u11) + (0,0.3)$)
to[out=180,in=90] ($(u11)+(-0.3,0)$);

\filldraw[fill=orange!20!white,opacity=0.2] ($(u21)+(-0.35,0)$) 
to[out=270,in=45] ($(v11) + (-0.35,0)$) 
to[out=225,in=180] ($(v11) + (0,-0.35)$)
to[out=0,in=225] ($(v11) + (0.35,0)$)
to[out=45,in=270] ($(u21) + (0.35,0)$)
to[out=90,in=0] ($(u21) + (0,0.35)$)
to[out=180,in=90] ($(u21)+(-0.35,0)$);

\filldraw[fill=orange!20!white,opacity=0.2] ($(u31)+(-0.4,0)$) 
to[out=270,in=60] ($(v21) + (0,0.4)$) 
to[out=180,in=0] ($(v11) + (0,0.4)$)
to[out=180,in=90] ($(v11) + (-0.4,0)$) 
to[out=225,in=180] ($(v11) + (0,-0.4)$)
to[out=0,in=180] ($(v21) + (0,-0.4)$)
to[out=0,in=225] ($(v21) + (0.4,0)$)
to[out=45,in=270] ($(u31) + (0.4,0)$)
to[out=90,in=0] ($(u31) + (0,0.4)$)
to[out=180,in=90] ($(u31)+(-0.4,0)$);

\filldraw[fill=orange!20!white,opacity=0.2] ($(u41)+(-0.2,0)$) 
to[out=270,in=60] ($(v31) + (0,0.2)$) 
to[out=180,in=0] ($(v11) + (0,0.2)$)
to[out=180,in=90] ($(v11) + (-0.2,0)$) 
to[out=225,in=180] ($(v11) + (0,-0.2)$)
to[out=0,in=180] ($(v31) + (0,-0.2)$)
to[out=0,in=225] ($(v31) + (0.2,0)$)
to[out=45,in=270] ($(u41) + (0.2,0)$)
to[out=90,in=0] ($(u41) + (0,0.2)$)
to[out=180,in=90] ($(u41)+(-0.2,0)$);

\filldraw[fill=orange!20!white,opacity=0.2] ($(u51)+(-0.1,0)$) 
to[out=270,in=60] ($(v31) + (0,0.1)$) 
to[out=180,in=0] ($(v11) + (0,0.1)$)
to[out=180,in=90] ($(v11) + (-0.1,0)$) 
to[out=225,in=180] ($(v11) + (0,-0.1)$)
to[out=0,in=180] ($(v31) + (0,-0.1)$)
to[out=0,in=225] ($(v31) + (0.1,0)$)
to[out=45,in=270] ($(u51) + (0.1,0)$)
to[out=90,in=0] ($(u51) + (0,0.1)$)
to[out=180,in=90] ($(u51)+(-0.1,0)$);

\filldraw[fill=black!20!white,opacity=0.2] ($(u51)+(-0.3,0)$) 
to[out=270,in=60] ($(v41) + (0,0.3)$) 
to[out=225,in=0] ($(v31) + (0,-0.5)$) 
to[out=180,in=0] ($(v21) + (0,0.3)$)
to[out=180,in=0] ($(v11) + (0,0.3)$)
to[out=180,in=90] ($(v11) + (-0.3,0)$)
to[out=270,in=180] ($(v11) + (0,-0.3)$) 
to[out=0,in=180] ($(v21) + (0,-0.3)$)
to[out=0,in=180] ($(v31) + (0,-0.8)$)
to[out=0,in=180] ($(v41) + (0,-0.3)$) 
to[out=0,in=225] ($(v41) + (0.3,0)$) 
to[out=90,in=270] ($(u51) + (0.3,0)$)
to[out=90,in=0] ($(u51) + (0,0.3)$)
to[out=180,in=90] ($(u51)+(-0.3,0)$);

\filldraw[fill=red!20!white,opacity=0.2] ($(u71)+(-0.45,0)$) 
to[out=270,in=90] ($(e11) + (-0.5,0)$) 
to[out=270,in=180] ($(e11) + (0,-0.5)$) 
to[out=0,in=270] ($(e11) + (0.5,0)$) 
to[out=90,in=270] ($(u71) + (0.45,0)$) 
to[out=90,in=0] ($(u71) + (0,0.45)$)
to[out=180,in=90] ($(u71) + (-0.45,0)$);

\filldraw[fill=red!20!white,opacity=0.2] ($(u81)+(-0.45,0)$) 
to[out=270,in=90] ($(e21) + (-0.5,0)$) 
to[out=270,in=180] ($(e21) + (0,-0.5)$) 
to[out=0,in=270] ($(e21) + (0.5,0)$) 
to[out=90,in=270] ($(u81) + (0.45,0)$) 
to[out=90,in=0] ($(u81) + (0,0.45)$)
to[out=180,in=90] ($(u81) + (-0.45,0)$);


\node[vertex2,label={above:$u_1$}] (u12) at (-10,0) {};
\node[vertex2,label={above:$u_2$}] (u22) at (-9,0) {};
\node[vertex2,label={above:$u_3$}] (u32) at (-8,0) {};
\node[vertex2,label={above:$u_4$}] (u42) at (-7,0) {};
\node[vertex2,label={above:$u_5$}] (u52) at (-6,0) {};
\node[vertex2,label={above:$u_6$}] (u62) at (-5,0) {};
\node[vertex2,label={above:$u_7$}] (u72) at (-4,0) {};
\node[vertex2,label={above:$u_8$}] (u82) at (-3,0) {};

\node[] (e12) at (-5,-1) {{\footnotesize $v_1,v_2,v_3,v_4$}};

\node[] (e22) at (-3,-1.5) {{\footnotesize $v_1,v_2,v_3,v_4,v_5$}};

\node[] (n2) at (-4.5,-2.2) {{\footnotesize$H_{C}(0,1,2,3,2,1)$}};

\node[vertex2,label={above:$v_1$}] (v12) at (-10,-2) {};
\node[vertex2,label={above:$v_2$}] (v22) at (-9,-2) {};
\node[vertex2,label={above:$v_3$}] (v32) at (-8,-2) {};
\node[vertex2,label={above:$v_4$}] (v42) at (-7,-2) {};
\node[vertex2,label={above:$v_5$}] (v52) at (-6,-2) {};


\filldraw[fill=orange!20!white,opacity=0.2] ($(u12)+(-0.3,0)$) 
to[out=270,in=90] ($(v12) + (-0.3,0)$) 
to[out=270,in=180] ($(v12) + (0,-0.3)$)
to[out=0,in=270] ($(v12) + (0.3,0)$)
to[out=90,in=270] ($(u12) + (0.3,0)$)
to[out=90,in=0] ($(u12) + (0,0.3)$)
to[out=180,in=90] ($(u12)+(-0.3,0)$);

\filldraw[fill=orange!20!white,opacity=0.2] ($(u22)+(-0.35,0)$) 
to[out=270,in=45] ($(v12) + (-0.35,0)$) 
to[out=225,in=180] ($(v12) + (0,-0.35)$)
to[out=0,in=225] ($(v12) + (0.35,0)$)
to[out=45,in=270] ($(u22) + (0.35,0)$)
to[out=90,in=0] ($(u22) + (0,0.35)$)
to[out=180,in=90] ($(u22)+(-0.35,0)$);

\filldraw[fill=orange!20!white,opacity=0.2] ($(u32)+(-0.4,0)$) 
to[out=270,in=60] ($(v22) + (0,0.4)$) 
to[out=180,in=0] ($(v12) + (0,0.4)$)
to[out=180,in=90] ($(v12) + (-0.4,0)$) 
to[out=225,in=180] ($(v12) + (0,-0.4)$)
to[out=0,in=180] ($(v22) + (0,-0.4)$)
to[out=0,in=225] ($(v22) + (0.4,0)$)
to[out=45,in=270] ($(u32) + (0.4,0)$)
to[out=90,in=0] ($(u32) + (0,0.4)$)
to[out=180,in=90] ($(u32)+(-0.4,0)$);

\filldraw[fill=orange!20!white,opacity=0.2] ($(u42)+(-0.2,0)$) 
to[out=270,in=60] ($(v32) + (0,0.2)$) 
to[out=180,in=0] ($(v12) + (0,0.2)$)
to[out=180,in=90] ($(v12) + (-0.2,0)$) 
to[out=225,in=180] ($(v12) + (0,-0.2)$)
to[out=0,in=180] ($(v32) + (0,-0.2)$)
to[out=0,in=225] ($(v32) + (0.2,0)$)
to[out=45,in=270] ($(u42) + (0.2,0)$)
to[out=90,in=0] ($(u42) + (0,0.2)$)
to[out=180,in=90] ($(u42)+(-0.2,0)$);

\filldraw[fill=orange!20!white,opacity=0.2] ($(u52)+(-0.1,0)$) 
to[out=270,in=60] ($(v32) + (0,0.1)$) 
to[out=180,in=0] ($(v12) + (0,0.1)$)
to[out=180,in=90] ($(v12) + (-0.1,0)$) 
to[out=225,in=180] ($(v12) + (0,-0.1)$)
to[out=0,in=180] ($(v32) + (0,-0.1)$)
to[out=0,in=225] ($(v32) + (0.1,0)$)
to[out=45,in=270] ($(u52) + (0.1,0)$)
to[out=90,in=0] ($(u52) + (0,0.1)$)
to[out=180,in=90] ($(u52)+(-0.1,0)$);

\filldraw[fill=black!20!white,opacity=0.2] ($(u62)+(-0.3,0)$) 
to[out=270,in=60] ($(v42) + (0,0.3)$) 
to[out=225,in=0] ($(v32) + (0,0.3)$) 
to[out=180,in=0] ($(v22) + (0,0.3)$)
to[out=180,in=0] ($(v12) + (0,0.3)$)
to[out=180,in=90] ($(v12) + (-0.3,0)$)
to[out=270,in=180] ($(v12) + (0,-0.3)$) 
to[out=0,in=180] ($(v22) + (0,-0.3)$)
to[out=0,in=180] ($(v32) + (0,-0.3)$)
to[out=0,in=180] ($(v42) + (0,-0.3)$) 
to[out=0,in=225] ($(v42) + (0.3,0)$) 
to[out=90,in=270] ($(u62) + (0.3,0)$)
to[out=90,in=0] ($(u62) + (0,0.3)$)
to[out=180,in=90] ($(u62)+(-0.3,0)$);

\filldraw[fill=red!20!white,opacity=0.2] ($(u72)+(-0.45,0)$) 
to[out=270,in=90] ($(e12) + (-0.5,0)$) 
to[out=270,in=180] ($(e12) + (0,-0.5)$) 
to[out=0,in=270] ($(e12) + (0.5,0)$) 
to[out=90,in=270] ($(u72) + (0.45,0)$) 
to[out=90,in=0] ($(u72) + (0,0.45)$)
to[out=180,in=90] ($(u72) + (-0.45,0)$);

\filldraw[fill=red!20!white,opacity=0.2] ($(u82)+(-0.45,0)$) 
to[out=270,in=90] ($(e22) + (-0.5,0)$) 
to[out=270,in=180] ($(e22) + (0,-0.5)$) 
to[out=0,in=270] ($(e22) + (0.5,0)$) 
to[out=90,in=270] ($(u82) + (0.45,0)$) 
to[out=90,in=0] ($(u82) + (0,0.45)$)
to[out=180,in=90] ($(u82) + (-0.45,0)$);


\node[vertex2,label={above:$u_1$}] (u13) at (-10,-4) {};
\node[vertex2,label={above:$u_2$}] (u23) at (-9,-4) {};
\node[vertex2,label={above:$u_3$}] (u33) at (-8,-4) {};
\node[vertex2,label={above:$u_4$}] (u43) at (-7,-4) {};
\node[vertex2,label={above:$u_5$}] (u53) at (-6,-4) {};
\node[vertex2,label={above:$u_6$}] (u63) at (-5,-4) {};
\node[vertex2,label={above:$u_7$}] (u73) at (-4,-4) {};
\node[vertex2,label={above:$u_8$}] (u83) at (-3,-4) {};

\node[] (e13) at (-5,-5) {{\footnotesize $v_1,v_2,v_3,v_4$}};

\node[] (e23) at (-3,-5.5) {{\footnotesize $v_1,v_2,v_3,v_4,v_5$}};

\node[] (n3) at (-4.5,-6.2) {{\footnotesize$H_{B,3}(0,1,2,3,2,1)$}};

\node[vertex2,label={above:$v_1$}] (v13) at (-10,-6) {};
\node[vertex2,label={above:$v_2$}] (v23) at (-9,-6) {};
\node[vertex2,label={above:$v_3$}] (v33) at (-8,-6) {};
\node[vertex2,label={above:$v_4$}] (v43) at (-7,-6) {};
\node[vertex2,label={above:$v_5$}] (v53) at (-6,-6) {};


\filldraw[fill=orange!20!white,opacity=0.2] ($(u13)+(-0.3,0)$) 
to[out=270,in=90] ($(v13) + (-0.3,0)$) 
to[out=270,in=180] ($(v13) + (0,-0.3)$)
to[out=0,in=270] ($(v13) + (0.3,0)$)
to[out=90,in=270] ($(u13) + (0.3,0)$)
to[out=90,in=0] ($(u13) + (0,0.3)$)
to[out=180,in=90] ($(u13)+(-0.3,0)$);

\filldraw[fill=orange!20!white,opacity=0.2] ($(u23)+(-0.35,0)$) 
to[out=270,in=45] ($(v13) + (-0.35,0)$) 
to[out=225,in=180] ($(v13) + (0,-0.35)$)
to[out=0,in=225] ($(v13) + (0.35,0)$)
to[out=45,in=270] ($(u23) + (0.35,0)$)
to[out=90,in=0] ($(u23) + (0,0.35)$)
to[out=180,in=90] ($(u23)+(-0.35,0)$);

\filldraw[fill=orange!20!white,opacity=0.2] ($(u33)+(-0.4,0)$) 
to[out=270,in=60] ($(v23) + (0,0.4)$) 
to[out=180,in=0] ($(v13) + (0,0.4)$)
to[out=180,in=90] ($(v13) + (-0.4,0)$) 
to[out=225,in=180] ($(v13) + (0,-0.4)$)
to[out=0,in=180] ($(v23) + (0,-0.4)$)
to[out=0,in=225] ($(v23) + (0.4,0)$)
to[out=45,in=270] ($(u33) + (0.4,0)$)
to[out=90,in=0] ($(u33) + (0,0.4)$)
to[out=180,in=90] ($(u33)+(-0.4,0)$);

\filldraw[fill=orange!20!white,opacity=0.2] ($(u43)+(-0.2,0)$) 
to[out=270,in=60] ($(v33) + (0,0.2)$) 
to[out=180,in=0] ($(v13) + (0,0.2)$)
to[out=180,in=90] ($(v13) + (-0.2,0)$) 
to[out=225,in=180] ($(v13) + (0,-0.2)$)
to[out=0,in=180] ($(v33) + (0,-0.2)$)
to[out=0,in=225] ($(v33) + (0.2,0)$)
to[out=45,in=270] ($(u43) + (0.2,0)$)
to[out=90,in=0] ($(u43) + (0,0.2)$)
to[out=180,in=90] ($(u43)+(-0.2,0)$);

\filldraw[fill=orange!20!white,opacity=0.2] ($(u53)+(-0.1,0)$) 
to[out=270,in=60] ($(v33) + (0,0.1)$) 
to[out=180,in=0] ($(v13) + (0,0.1)$)
to[out=180,in=90] ($(v13) + (-0.1,0)$) 
to[out=225,in=180] ($(v13) + (0,-0.1)$)
to[out=0,in=180] ($(v33) + (0,-0.1)$)
to[out=0,in=225] ($(v33) + (0.1,0)$)
to[out=45,in=270] ($(u53) + (0.1,0)$)
to[out=90,in=0] ($(u53) + (0,0.1)$)
to[out=180,in=90] ($(u53)+(-0.1,0)$);

\filldraw[fill=black!20!white,opacity=0.2] ($(u63)+(-0.3,0)$) 
to[out=270,in=60] ($(v43) + (0,0.3)$) 
to[out=225,in=0] ($(v33) + (0,-0.5)$) 
to[out=180,in=0] ($(v23) + (0,0.3)$)
to[out=180,in=0] ($(v13) + (0,0.3)$)
to[out=180,in=90] ($(v13) + (-0.3,0)$)
to[out=270,in=180] ($(v13) + (0,-0.3)$) 
to[out=0,in=180] ($(v23) + (0,-0.3)$)
to[out=0,in=180] ($(v33) + (0,-0.8)$)
to[out=0,in=180] ($(v43) + (0,-0.3)$) 
to[out=0,in=225] ($(v43) + (0.3,0)$) 
to[out=90,in=270] ($(u63) + (0.3,0)$)
to[out=90,in=0] ($(u63) + (0,0.3)$)
to[out=180,in=90] ($(u63)+(-0.3,0)$);


\filldraw[fill=red!20!white,opacity=0.2] ($(u73)+(-0.45,0)$) 
to[out=270,in=90] ($(e13) + (-0.5,0)$) 
to[out=270,in=180] ($(e13) + (0,-0.5)$) 
to[out=0,in=270] ($(e13) + (0.5,0)$) 
to[out=90,in=270] ($(u73) + (0.45,0)$) 
to[out=90,in=0] ($(u73) + (0,0.45)$)
to[out=180,in=90] ($(u73) + (-0.45,0)$);

\filldraw[fill=red!20!white,opacity=0.2] ($(u83)+(-0.45,0)$) 
to[out=270,in=90] ($(e23) + (-0.5,0)$) 
to[out=270,in=180] ($(e23) + (0,-0.5)$) 
to[out=0,in=270] ($(e23) + (0.5,0)$) 
to[out=90,in=270] ($(u83) + (0.45,0)$) 
to[out=90,in=0] ($(u83) + (0,0.45)$)
to[out=180,in=90] ($(u83) + (-0.45,0)$);

\end{tikzpicture}
\caption{Example of \texorpdfstring{$H_{A,j}$}{HAj},   \texorpdfstring{$H_{C}$}{HC}, and \texorpdfstring{$H_{B,j}$}{HBj}.}
\label{fig.2}
\end{figure}

\subsection{Decision problem: either Construction~\texorpdfstring{$A$}{A} or \texorpdfstring{$B$}{B} are closer to the colex} \label{sec:extremal_a_b}

Let us now prove the result that implies Theorem~\ref{thm:alg}.

\begin{theorem}[Decision problem on extremality given depth and cardinality] \label{thm:decision}
	Let $S\subset \binom{[n]}{k}$ be an extremal family with depth $(k-j)\geq 0$, then:
	
	\noindent either 
	\begin{quote}there exists a vector of non-negative integers $(n_1,\ldots,n_k)$ for which $H_{A,j}(n_1,\ldots,n_k)$ has the same shadow $k$-binomial decomposition as $S$ (is extremal and has the same cardinality as $S$),
		\end{quote} or 
	\begin{quote}there exists a vector of non-negative integers $(n_1',\ldots,n_k')$ for which $H_{B,j}(n_1',\ldots,n_k')$ has the same shadow $k$-binomial decomposition as $S$ (is extremal and has the same cardinality as $S$).
		\end{quote}
	Moreover, the existence of $(n_1,\ldots,n_k)$ or $(n_1',\ldots,n_k')$ can be determined with an algorithm with running time $O(nk)$; the running time considers that the family $S$ is being given as a hypergraph, and the hypergraph of a family is considered a valid output as an answer.
\end{theorem}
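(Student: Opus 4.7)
The plan is to leverage the bins-balls-wall interpretation of Proposition~\ref{prop.translation} together with the structural dichotomy of Claim~\ref{cl:edges_in_non_colex}. Given an extremal family $S$ of depth $k-j$, I would supercomfortably order the edges of $H(S)$ as $\{e_1,\ldots,e_m\}$ and compute $t=\text{sc}(E(H))$. The prefix $\{e_1,\ldots,e_t\}$ is the hypergraph of a colex, so by the analysis of Section~\ref{sec:hypergraph_of_colex} each $e_i$ contributes a single ball at position $i-1$ with delay $|e_i|$. The edge $e_{t+1}$ has size $j+1$ (since the depth is $k-j$) and satisfies case~\ref{exA} or case~\ref{exB} of Claim~\ref{cl:edges_in_non_colex}; these two alternatives correspond exactly to the modification of $e_{t+1}$ in Constructions $A$ and $B$ respectively.

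Second, I would compute the bins-balls-wall process for $H_{A,j}(n_1,\ldots,n_k)$ and $H_{B,j}(n_1,\ldots,n_k)$ in terms of the parameters, yielding explicit formulas for the shadow $k$-binomial decomposition via Proposition~\ref{prop.translation}\ref{en:prop_trans_2}. Since $S$ is extremal, Remark~\ref{rmk:k-binom_determines_shadow} says its shadow $k$-binomial decomposition equals its full $k$-binomial decomposition, hence it is determined by $|S|$. The task thus reduces to \emph{inverting} the explicit formula: given $|S|$, recover parameters $(n_1,\ldots,n_k)$ for which Construction $A$ or $B$ achieves the same cardinality and remains extremal (the latter follows, via Remark~\ref{rmk:extremal_wall}, from the bound $n_1+\cdots+n_i\leq n-i+1$ on the parameters). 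Invertibility follows from the triangular structure of the formula: $n_k$ is determined by the top binomial coefficient, then $n_{k-1}$ by the next, and so on.

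The auxiliary Constructions $A'$ and $B'$, which allow specifying an index $r$, provide the flexibility needed to absorb contributions of the edges $e_{t+2},\ldots,e_m$ when $S$ has many non-colex edges. I expect this to be \textbf{the main obstacle}: whereas $H_{A,j}$ and $H_{B,j}$ modify only one edge relative to $H_C$, the family $S$ may have a more complicated structure beyond $e_{t+1}$, and the argument must show that the \emph{aggregate} shift of the bins-balls-wall configuration is nonetheless matched by a single choice of $(n_1,\ldots,n_k)$. The key to overcoming this is that the map from parameter tuples $(n_1,\ldots,n_k)$ to shadow $k$-binomial decompositions is injective in its top coefficients (as the hypotenusal process is deterministic and the contributions of different sizes do not interfere at the top), so matching just the cardinality suffices to fix the decomposition.

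For the algorithmic part, the $k$-binomial decomposition of $m$ can be computed in $O(k\log n)$ by greedy subtraction of maximal binomial coefficients. Then, using the explicit inversion formula, each $n_i$ can be recovered in $O(n)$ (by binary search on the binomial root), giving $O(nk)$ in total. We run the procedure once attempting Construction $A$ and once attempting Construction $B$; if either succeeds with admissible parameters (i.e., satisfying $n_1+\cdots+n_i\leq n-i+1$ and $n_{j+1}\geq 1$ and $n_1+\cdots+n_{j+1}\geq 2$, so that the depth is indeed $k-j$) we output it, and otherwise no extremal family with these characteristics exists. The total time is $O(nk)$ as claimed.
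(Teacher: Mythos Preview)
Your proposal correctly identifies the overall architecture (the dichotomy from Claim~\ref{cl:edges_in_non_colex}, the role of Constructions $A'/B'$, the algorithmic two-pass), but the core step has a genuine gap.

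You write that ``invertibility follows from the triangular structure'' and that ``matching just the cardinality suffices to fix the decomposition'' because the map from parameter tuples to shadow decompositions is injective. Injectivity is the wrong issue: what must be shown is \emph{existence} of non-negative integers $(n_1',\ldots,n_k')$ such that the construction has exactly the target wall sequence \emph{and} remains a valid hypergraph (no more than $n-k$ edges, so the family stays extremal). Nothing in your argument rules out that the ``inverse'' yields a negative $n_i'$, or requires more edges than the vertex set can accommodate. The paper establishes existence by a \emph{coupling argument}: it pairs every ball produced by $H_{A',j,t}$ (respectively $H_{B',j,t}$) with a ball of $S$ lying weakly to its left, iteration by iteration, and shows that the unpaired balls of $S$ can be absorbed by adding extra edges of larger size to the construction (this is where $n_{j+2}',n_{j+3}',\ldots$ are determined one step at a time, not by a global inversion). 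The point is monotonicity in the bins-balls-wall picture --- weakly-right balls push the wall no further than weakly-left ones --- and your proposal never invokes this.

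A secondary issue: your triangular recovery is stated backwards. Edges of size $i$ have delay $i$ and are processed at iteration $i$, so $n_1$ fixes $w_1$ and hence $\beta_0$, then $n_2$ (together with descendants of size-1 balls) fixes $w_2$, and so on. Thus the recovery goes $n_1\to n_2\to\cdots\to n_k$, not from $n_k$ downward. This matters because at step $i$ you must account for descendants of balls placed in earlier steps; the formula is not purely triangular in the $n_i$'s alone, and this is precisely why the paper runs the process forward and adds edges greedily rather than solving a closed-form inverse. The algorithmic $O(nk)$ bound in the paper likewise comes from bounding the number of live balls by $n$ at each of $k$ iterations of the actual process, not from binary-searching binomial roots.
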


\paragraph{Sketch of the proof.}
If $S$ is an extremal family of depth $(k-j)\geq 0$ with hypergraph $H$, then $H^{(j)}$ is the hypergraph of an initial segment of the colex order, while $H^{(j+1)}$ is not. In particular, when incrementally adding the edges of size $j+1$ to $H^{(j)}$, there is a point in which the give subhypergraph is not the hypergraph of an initial segment in the colex order. In fact, we may order the edges of $H$ supercomfortably so that the edge of size $j+1$ when this occurs has the largest possible index; let $t+1$ be such index (so $t=\text{sc}(E(H))$. Now, the edge $e_{t+1}$ faces the dichothomy from the cases \ref{exA} and \ref{exB} from Claim~\ref{cl:edges_in_non_colex} above.

In the case that $H$ behaves like \ref{exA} (respectively \ref{exB}), we show the existence of a non-negative sequence $(n_1,\ldots,n_k)$ (resp. $(n_1,\ldots,n_k)$)  for which the construction $H_{A',j,t}(n_1,\ldots,\allowbreak n_k)$ (respectively  $H_{B',j,t}(n_1,\ldots,\allowbreak n_k)$) gives an extremal family of $k$-sets with the same cardinality as $S$.
Finally, we show that if $H_{A',j,t}(n_1,\ldots,\allowbreak n_k)$ (respectively $H_{B',j,t}(n_1,\ldots,n_k)$) gives an extremal family, then there exists another sequence of non-negative integers $(n_1',\ldots,n_k')$ (resp. $(n_1',\ldots,n_k')$) for which $H_{A,j}(n_1',\ldots,n_k')$ (respectively $H_{B,j}(n_1',\ldots,n_k')$) exists, are extremal and have the same cardinality.

\subsubsection{If \texorpdfstring{$S$}{S} satisfies \texorpdfstring{\ref{exA}}{31}, then Construction \texorpdfstring{$A$}{A} from \texorpdfstring{Section~\ref{sec:cons_a_b}}{Section 3} is \texorpdfstring{\emph{better}}{better}}

\begin{proposition}
	 \label{prop:conap_best}
	If $S$ is an extremal family of depth $k-j$ with hypergraph $H$ that satisfies \ref{exA} for $H^{(j+1)}$ (actually for $\{e_1,\ldots,e_{t+1}\}$, with $t=\text{st}(E(H))$), then there exists a sequence $(n_1',\ldots,n_{k}')$ of non-negative integers such that $H_{A',j,t}(n_1',\ldots,n_{k}')$ is extremal and has the same $k$-binomial decomposition as $S$.
\end{proposition}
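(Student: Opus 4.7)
The plan is to use the hypotenusal process as a dictionary between the hypergraph $H$ and $H_{A',j,t}(n_1',\ldots,n_k')$: by Proposition~\ref{prop.translation}~\ref{en:prop_trans_2}, if the two bin-balls-wall configurations produce the same sequence of wall positions $w_1,\ldots,w_k$, then the two shadow $k$-binomial decompositions coincide. Since $S$ is extremal, Remark~\ref{rmk:k-binom_determines_shadow} makes this the same as matching the $k$-binomial decomposition; moreover, the constraint $\beta_{k-1}\geq 1$ is preserved (Theorem~\ref{thm:charac}), so $H_{A',j,t}(n_1',\ldots,n_k')$ would inherit extremality automatically.

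First, I set $n_i'$, for $i\leq j$, equal to the number of edges of size $i$ in the colex segment $\{e_1,\ldots,e_t\}$ of $H$, and require $n_{j+1}'\geq t+1 - (n_1'+\cdots+n_j')$ so that Construction $A'$ at position $r=t$ is well defined. By Section~\ref{sec:hypergraph_of_colex}, the first $t$ edges of $H_{A',j,t}(n_1',\ldots,n_k')$ and of $H$ are then isomorphic as hypergraphs of initial colex segments and deposit identical multisets of balls via the trees $T_{e_i}$. Next I compare the contribution of $e_{t+1}$: both hypergraphs fall under case \ref{exA} of Claim~\ref{cl:edges_in_non_colex}, so in each case $B_{t+1}$ contains a dominating set of size at most $t-1$. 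The simultaneous swap $v_\tau\leftrightarrow u_r$ and $u_{r+1}\leftrightarrow v_{j+1}$ used to define $H_{A',j,t}$ is designed precisely so that $B_{t+1}$ has a minimal element of size exactly $t-1$ (the edge $e_{n_1'+\cdots+n_{j+1}'-1}$ becomes entirely doubly covered, so dominating it costs one vertex less), producing a canonical low-delay ball that mirrors the corresponding ball in $H$. Any other leaves of $T_{e_{t+1}}$ in $H$ correspond to strictly larger dominating sets with longer leaf-to-root paths, so by \eqref{eq:picked_avoided} they generate balls of delay strictly greater than $t+2$ and only influence the subsequent wall positions.

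Finally, I determine the remaining $n_{j+2}',\ldots,n_k'$ greedily to match the later wall positions prescribed by the hypotenusal process of $S$. Each additional edge of size $i$ in the colex-like tail of $H_{A',j,t}$ deposits a single ball $\binom{n-\ell-i}{k-i}$ (with $\ell$ its index among the preceding edges, by \eqref{eq:4}), and adding such edges shifts the walls $w_{k-i+1},\ldots,w_k$ monotonically to the right; iterating from $i=k$ downward, I choose $n_i'$ as the unique non-negative integer that forces $w_{k-i+1}$ to match the target value. Monotonicity of the shadow sequence (Theorem~\ref{thm:main}), together with the fact that the two hypergraphs must realize the same $|S|$, prevents any $n_i'$ from becoming negative. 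The main obstacle is the bookkeeping in this third step: the high-delay balls inherited from the richer tree $T_{e_{t+1}}$ of $H$ must be exactly absorbed into later wall movements by reducing the number of subsequent edges added to $H_{A',j,t}$, and verifying this without parity mismatches relies on translation invariance of the hypotenusal process, so that any excess balls at delay $\geq t+2$ in $H$ are matched by suitably fewer colex-tail edges in $H_{A',j,t}$.
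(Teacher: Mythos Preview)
Your proposal has the right high-level intent (match wall positions via Proposition~\ref{prop.translation}) but contains genuine gaps that would make the argument fail.

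First, the greedy step is run in the wrong direction. You write that you ``iterate from $i=k$ downward'' and choose $n_i'$ to hit the wall value $w_{k-i+1}$. The hypotenusal process is forward in the delay counter: the position $w_{j+2}$ depends only on the balls of delay $\le j+2$, and $w_{j+3}$ depends on those and on the descendants they produced; you cannot fix $w_k$ first and then go back. The paper proceeds forward, from delay $j+2$ upward, adding edges of size $j+2$ to $H_{A',j,t}$ until the number of balls at that delay matches, then moving to delay $j+3$, etc.

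Second, and more seriously, the direction of the comparison is inverted. You claim that excess balls in $H$ are absorbed by using \emph{fewer} colex-tail edges in $H_{A',j,t}$. In fact $H_{A',j,t}$ generically needs \emph{more} edges than $H$: each edge $e_i'$ in $H_{A',j,t}$ produces a single ball at position exactly $i-2$, whereas the corresponding edge $e_i$ in $H$ (for $i>t$) has its principal ball at position $\le i-2$ (Claim~\ref{cl:pos_balls} in the paper) and may produce additional balls from a richer tree $T_{e_i}$. Balls further to the left and extra balls both push the wall \emph{more}, so to reproduce the same wall sequence the construction $H_{A',j,t}$ must compensate by adding extra edges. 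The heart of the paper's argument is a ball-pairing (coupling) invariant: at each delay one pairs every ball of $H_{A',j,t}$ with a ball of $H$ lying weakly to its left, processes them in tandem, and then adds just enough new edges to $H_{A',j,t}$ to pair the leftover balls of $H$. This invariant is what guarantees the required $n_i'$ are non-negative and that the wall positions can be matched exactly; your appeal to Theorem~\ref{thm:main} does not supply this.

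Finally, there is a confusion between the index $t=\text{sc}(E(H))$ and the delay: the balls coming from $e_{t+1}$ have delay $|e_{t+1}|=j+1$ (plus picked elements), not $t+1$ or $t+2$, so the sentence about ``delay strictly greater than $t+2$'' is not meaningful. You also do not say what happens with the remaining size-$(j+1)$ edges $e_{t+2},\ldots,e_{n_1+\cdots+n_{j+1}}$ of $H$, which the paper handles explicitly via Claim~\ref{cl:pos_balls} and by keeping $n_{j+1}'=n_{j+1}$.
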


\begin{proof}
	Let $(n_1,\ldots,n_{k})$ be the sequences for the number of edges of each size of $H$.
	Let $e\in\{e_1,\ldots,e_t\}$ be the edge all whose vertices have degree $\geq 2$ in $\{e_1,\ldots,e_t,e_{t+1}\}$. In particular, $e_{t+1}$ covers all the vertices of degree $1$ of $e$ in $\{e_1,\ldots,e_t\}$ and, since no edge in $H$ is completely inside another, then there exists a vertex $w$ of $e$ of degree $d\geq 2$ in $\{e_1,\ldots,e_t\}$ that $e_{t+1}$ does not cover. As $w\notin e_{t+1}$, then $w$ also has degree $d$  in $\{e_1,\ldots,e_t,e_{t+1}\}$, and $B_{t+1}$ contains a dominating set of size $\leq 1+(t-d)$ (greedily expand $w$ with vertices in $V(\{e_1,\ldots,e_t\})\setminus V(e_{t+1})$; since each additional vertex dominates at least an additional edge, and there are $t-d$ edges in $\{e_1,\ldots,e_t\}$ not yet dominated by $w$, we need at most $\leq 1+(t-d)$ vertices in total; by considering a minimal dominating set within it, the claim follows.)
	
	 Since $\{e_1,\ldots,e_{t}\}$ form the initial segment of the colex order,
	 we use the notation given in Section~\ref{sec:hypergraph_of_colex} and conclude that the vertex with minimum degree, from those of degree $\geq 2$ in $\{e_1,\ldots,e_t\}$ is $v_{s-1}$, where $s$ is the order of the edge $e_{t-1}$. Let $d'$ denote such minimum degree, thus we have $d'\leq d$.
	 
	\begin{claim} \label{cl:pos_balls}
		For each of the edges $e_i$, $i\in[t+1,n_1+\cdots+n_{j+1}+n_{j+2}+\ldots+n_k]$, each dominating set in $B_i$ have size $\leq i-1-1$. In particular, the balls induced by $B_i$ are located at positions  $\leq i-1-1$.
	\end{claim}
	\begin{proof}[Proof of Claim~\ref{cl:pos_balls}]
		 As we should dominate $i-1$ edges, but when dominating the edge $e$ we are dominating at least an additional edge, the first part of the claim follows. The second is obtained from the first using Observation~\ref{obs:dict_bbw-sbd}.
	\end{proof}

Now consider the bins-balls-wall configuration from the hypergraph $H$. The balls given by the edges $\{e_1,\ldots,e_t\}$, when processed first leave no descendant and the wall at position $t$ (as the edges $\{e_1,\ldots,e_t\}$ constitute the initial segment of the colex order). Next, let us focus on the balls placed by the other edges $e_i$, $i\in[t+1,n_1+\cdots+n_k]$ using Claim~\ref{cl:pos_balls}.
Order the blocking sets in each $B_i$, $i\in[t+1,n_1+\cdots+n_{k}]$, so that the blocking sets highlighted above (the one blocking set for $e_{t+1}$ of size $\leq 1+(t-d)$, and one blocking set for each $e_i$, $i\in[t+2,n_1+\cdots+n_{k}]$ of size $\leq i-1-1$) are the first ones, or, equivalently, they are at the root at the tree $T_{e_i}$.
Translated into the bins-balls-wall configuration using Observation~\ref{obs:dict_bbw-sbd}, these highlighted sets generate balls at positions and delays
\begin{equation} \label{eq:pos_balls}
(1+(t-d),|e_{t+1}|),\; (t+2-1-1,|e_{t+2}|),\;\ldots,\;(n_1+\ldots+n_{k}-1-1,|e_{n_1+\cdots+n_{k}}|)
\end{equation}
or, if the corresponding blocking set is even smaller, then they are in a position more to the left of the indicated. As per Claim~\ref{cl:pos_balls} the edges $e_i$, $i\in[t+1,n_1+\cdots+n_k]$, may generate further balls, with strictly larger delays, yet all the balls related with the edge $e_i$, for $i\in[t+2,n_1+\cdots+n_{k}]$, are weakly to the left of $(i-1-1,|e_{i}|)$.

For the subhypergraphs $H_{A',j,t}(n_1,\ldots,n_{j+1},n_{j+2}',\ldots, n_k')$ (same $n_1,\ldots,n_{j+1}$ as $H$, with the terms $n_{j+2}',\ldots, n_k'$ as parameters, the edges are denoted by $e_i'$), the balls for the first $t$ edges have the same position and delay as for $H$, since they are the same edges. The balls of the edges indexed by $i\in[t+1,n_1+\ldots+n_{j+1}+n_{j+2}'+\cdots+n_k']$ have exactly one dominating set in each $B_i'$: for $e_{t+1}'$ it has size $1+(t-d')$ while for the other indices, the dominating set has size $=i-1-1$. These blocking sets give balls at positions:
\begin{align} \label{eq:pos_balls_2}
&(1+(t-d'),|e_{t+1}'|),\; (t+2-1-1,|e_{t+2}'|),\;\ldots,\;(n_1+\ldots+n_{j+1}-1-1,|e_{n_1+\cdots+n_{j+1}}'|), \\
&\;\; (n_1+\ldots+n_{j+1}+1-1-1,|e_{n_1+\cdots+n_{j+1}+1}'|),\;\ldots, \nonumber \\
&\qquad (n_1+\ldots+n_{j+1}+n_{j+2}'+\cdots+n_k'-1-1,|e_{n_1+\cdots+n_{j+1}+n_{j+2}'+\cdots+n_k'}'|) \nonumber
\end{align}
 with no additional balls.

The numbers $n_{j+2}',\ldots,n_k'$ are found as follows.
Consider the balls with delay $j+1$ in $H$ and those with delay $j+1$ in $H_{A',j,t}(n_1,\ldots,n_{j+1},0,\ldots,0)$; we can pair them as follows:
\begin{displaymath}
\begin{cases}
(1+(t-d'),|e_{t+1}'|) \leftrightarrow (1+(t-d),|e_{t+1}|), \\
(x_{t+2},|e_{t+2}|)\leftrightarrow (t+2-1-1,|e_{t+2}|),\\
\ldots,\\
(x_{n_1+\cdots+n_{j+1}},|e_{n_1+\cdots+n_{j+1}}|) \leftrightarrow (n_1+\cdots+n_{j+1}-1-1,|e_{n_1+\cdots+n_{j+1}}'|)
\end{cases}
\end{displaymath}
so that the ball from $H$ is weakly to the left from its paired ball in $H_{A',j,t}$. The wall is at the same position in both instances.
Process the balls simultaneously in pairs, beginning with the balls at smaller positions. Then the ball from $H$ generates at least as many balls as its pair in $H_{A',j,t}$. From the generated balls, repair them so that the descendents from the balls of $H$ with smaller position may become unpaired. Since the wall is at the same position, all the descendants from the balls in $H_{A',j,t}$ are paired with a ball descendent from $H$ at the same position.
This procedure is done until all the balls are processed.

Now the balls for $H$ with delay $j+2$ are: those descending from some ball of delay $j+1$, some balls coming from edges $e_i$, $i\leq n_1+\cdots+n_{j+1}$, which initially had larger delay (in some instances when $|B_i|\geq 2$), and some balls coming from edges $e_i$, $i\in [n_1+\cdots+n_{j+1}+1,n_1+\cdots+n_{j+1}+n_{j+2}]$. 

The balls from $H_{A',j,t}(n_1,n_2,\ldots,n_{j+1},0,\ldots,0)$ with delay $j+2$ are only those coming from descendants of edges $e_i$, $i\leq n_1+\cdots+n_{j+1}$, and thus they are all paired with some ball from $H$ with delay $j+2$.
Now consider  $H_{A',j,t}(n_1,n_2,\ldots,n_{j+1},n_{j+2}',\ldots,0)$ with $n_{j+2}'$ being the difference between the number of balls with delay $j+2$ in $H$ and the the number of balls with delay $j+2$ in $H_{A',j,t}(n_1,n_2,\ldots,n_{j+1},0,\ldots,0)$. Add the edges to $H_{A',j,t}(n_1,n_2,\ldots,n_{j+1},0,\ldots,0)$ one by one, and pair the new ball with the unpaired balls with delay $j+2$ with the following priority.
 
First pair the balls coming from the edges in $H$ indexed by $i\leq n_1+\cdots+n_{j+1}$ that have not been processed before (as they originally had delay $j+2$); all these blocking sets have size $\leq i-1-1$ (by the arguments leading to \eqref{eq:pos_balls} and the fact that all the edges leading up to $e_t$ have no balls with delay larger than $|e_i|$), thus these balls are at positions $\leq i-1-1$. Adding an edge $e_l'$ to $H_{A',j,t}$, $l\geq n_1+\cdots+n_{j+1}$ introduces a ball at position $=l-1-1$ which is to the right of the ball from $H$. Thus the balls from $H_{A',j,t}$ are to the right of the balls from $H$.

Second, pair the balls coming from edges $e_i$, $i\in [n_1+\cdots+n_{j+1}+1,n_1+\cdots+n_{j+1}+n_{j+2}]$, in $H$ with delay $j+2$.
Each edge induces exactly one ball with delay $j+2$, namely the ball from the root vertex from $T_{e_i}$. These are the balls highlighted in \eqref{eq:pos_balls}, thus they are at position $\leq i-1-1$ with delay $|e_i|$. The $n_{j+2}$ edges added to $H_{A',j,t}$ to pair them implies that $n_1+\ldots+n_{j+1}+n_{j+2}'\geq n_1+\ldots+n_{j+1}+n_{j+2}$; also the edge $e_{i'}'$ added to compensate the ball from $e_i$ is such that $i'\geq i$ (since we may have already add some edge in the previous step, and $n_1+\ldots+n_{j+1}\geq n_1+\ldots+n_{j+1}$), thus by \eqref{eq:pos_balls_2} induces a ball $(i'-1-1,|e_{i'}'|)$ which is to the right of the one from $e_i$.

Third, those balls coming from edges in $H$ indexed $i\leq n_1+\cdots+n_{j+1}$ that had a parent ball; the parent was paired (we are repairing the balls from $H_{A',j,t}$ to those of larger positions) with a ball $(s,|e_{i'}|)$ from an edge $e_{i'}'$ in $H_{A',j,t}$, with $s$ strictly (we are assuming that it gives some unpaired edges) to the right from the ball from $H$, the added edge of size $j+2$ gives a ball that is to the right of $(s,|e_{i'}|)$ (see \eqref{eq:pos_balls_2}) as it has a larger index; note that the parent may leave induce some additional unpaired balls, but all of those have consecutive positions, as so are the positions of the added balls in $H_{A',j,t}$ (see \eqref{eq:pos_balls_2}). Thus all the paired balls from $H_{A',j,t}$ are to the right of the balls from $H$.

Therefore, we have a matching number of balls, all the balls from $H_{A',j,t}$ are to weakly to the right of the balls from $H$, and we have $n_1+\ldots+n_{j+1}+n_{j+2}'\geq n_1+\ldots+n_{j+1}+n_{j+2}$. Therefore we run the iteration $j+2$. Then we iterate and keep repeating the previously described procedure, to conclude that the $n_{j+2}',\ldots,n_k'$ can always be found, so that $H_{A',j,t}(n_1,\ldots,n_{j+1},n_{j+2}',\ldots,n_k')$ match the wall positions of $H$. Thus the claim follows. Before finishing, let us mention that, since the position of the wall is $\leq n-k$, then there are less than $n-k$ edges in $H_{A',j,t}(n_1,\ldots,n_{j+1},n_{j+2}',\ldots,n_{j+r}',0,\ldots,0)$, and so always a vertex of degree $0$  so that an edge can be added, and the procedure can continue.
\end{proof}

\begin{proposition}[Construction $A$ from Section~\ref{sec:cons_a_b} is \emph{as good as} $A'$] \label{prop:cona_best}
	If $S$ is an extremal family of depth $k-j$ with hypergraph $H_{A',j,t}(n_1,\ldots,n_{k})$ then there exists a sequence $(n_1,\ldots,n_{j+1},n_{j+1}',\ldots,n_{k}')$ of non-negative integers with the property that the family induced by $H_{A,j}(n_1,\ldots,n_{j+1},n_{j+1}',\ldots,n_{k}')$ is extremal and has the same $k$-binomial decomposition as $S$.
\end{proposition}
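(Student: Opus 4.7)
The strategy is analogous to Proposition~\ref{prop:conap_best}, using the bins--balls--wall correspondence of Proposition~\ref{prop.translation} to match shadow $k$-binomial decompositions. The plan is: given the extremal $H:=H_{A',j,t}(n_1,\ldots,n_k)$, determine $n_{j+2}',\ldots,n_k'$ so that $H':=H_{A,j}(n_1,\ldots,n_{j+1},n_{j+2}',\ldots,n_k')$ produces exactly the same sequence of wall positions $w_1,\ldots,w_k$; by Theorem~\ref{thm:charac} and Proposition~\ref{prop.translation}~\ref{en:prop_trans_2} this will immediately give extremality and equality of the shadow $k$-binomial decompositions.

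The first step is to compare the truncations of both hypergraphs to edges of size $\leq j+1$. Both share the colex skeleton $H_C(n_1,\ldots,n_j)$ and $n_{j+1}-1$ unmodified colex edges of size $j+1$; they differ only in which position among the size-$(j+1)$ edges is occupied by the modified edge (position $t+1$ in $H$, position $R:=n_1+\cdots+n_{j+1}$ in $H'$), and possibly by the choice of which vertex $v_s$ is swapped out. Using Lemma~\ref{lem:d} to describe the blocking sets $B_i$ of the size-$(j+1)$ edges, I would argue that moving the modified edge from position $t+1$ to position $R$ weakly decreases the minimum sizes of $B_i$ for the subsequent size-$(j+1)$ edges, because those edges no longer have a modified edge to dominate. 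Consequently, the balls produced in iterations $1,\ldots,j+1$ by $H'(n_1,\ldots,n_{j+1},0,\ldots,0)$ are weakly to the right of their counterparts in $H(n_1,\ldots,n_{j+1},0,\ldots,0)$, so the corresponding $H'$-walls are weakly to the left.

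The second step is to determine $n_{j+2}',\ldots,n_k'$ greedily in order of increasing size, following the pairing scheme of Proposition~\ref{prop:conap_best}. At iteration $s\geq j+2$ of the bins--balls--wall process, the balls of $H$ with delay $s$ split into descendants of balls already paired (matched by the invariant), balls coming from the $n_s$ colex edges of size $s$ in $H$, and descendants of not-yet-processed balls whose delay equals $s$. For each unpaired $H$-ball, introduce a new edge of size $s$ in $H'$ (incrementing $n_s'$), which produces a ball at the rightmost currently-free position; by Remark~\ref{rmk:extremal_wall}, the wall of the extremal $H$ never exceeds $n-k$, so a vertex of degree~$0$ is always available in $[n]$ to host such an edge. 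The invariant preserved across iterations is that every $H'$-ball is weakly to the right of its paired $H$-ball, which guarantees that both walls advance by the same amount and that the process can continue up to iteration $k$.

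The main obstacle lies in the first step, namely the comparison of blocking-set sizes after relocating the modified edge. The complication is that the index $t$ appearing in Construction~$A$ depends on $R$ while the analogous index $t'$ in Construction~$A'$ depends on $r+1\leq R$, so the two constructions may swap out genuinely different $v_s$-vertices (an explicit example can be seen in the parameter list $n_1=2$, $n_2=2$, $n_3=3$, $r+1=4$). The key observation is that the type-$u$ vertex introduced in either construction has degree~$1$ in the colex skeleton of edges of size $\leq j+1$, and the swapped-out $v_s$-vertex retains degree $\geq 2$ in both cases, so the modified edge admits an analogous dominating structure and Claim~\ref{cl:pos_balls} applied inside $H$ still bounds the ball positions by $i-2$ for the edge at index $i$. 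The remainder of the argument is the three-case pairing analysis already carried out in Proposition~\ref{prop:conap_best}, which transfers mutatis mutandis to the present setup once the first step is established.
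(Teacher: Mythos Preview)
Your overall approach matches the paper's: compare the balls produced at delay $j+1$ by the two constructions, show those of $H_{A,j}$ lie weakly to the right, and then reuse the pairing scheme from Proposition~\ref{prop:conap_best} to determine $n_{j+2}',\ldots,n_k'$.

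There is, however, a sign confusion in your first step. You write that moving the modified edge to the last position ``weakly decreases the minimum sizes of $B_i$ for the subsequent size-$(j+1)$ edges''; in fact the opposite happens. In $H_{A',j,t}$ the edges $e_i$ with $t+1<i\le R$ can dominate both $e_t$ and the modified $e_{t+1}$ with a single vertex (since $e_{t+1}$ now contains $u_t$), so their blocking sets have size $i-2$. In $H_{A,j}$ those same indices $i<R$ face only standard colex predecessors, so their blocking sets have size $i-1$, which is \emph{larger}. Your conclusion (balls weakly to the right) is nonetheless correct, but not for the reason you give.

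The paper's proof makes the comparison concrete: it writes out the two multisets of positions (displayed as \eqref{eq:balls_in_ap} and \eqref{eq:balls_in_a}) and observes that both contain $\{n_1+\cdots+n_j,\ldots,R-2\}$, differing only in one ball, at position $1+(t-d)$ for $H_{A',j,t}$ versus $R-1-d'$ for $H_{A,j}$. Your ``main obstacle'' (that $d$ and $d'$ may refer to different $v$-vertices) is then exactly the question of whether $R-1-d'\ge 1+(t-d)$. The paper settles this not by a single degree observation but via the case split \ref{en:alc1}--\ref{en:alc3} on whether $e_{t+1}$ is the first, second, or a later edge of size $j+1$; in several of those cases the two positions coincide and in the remaining ones $H_{A,j}$ is strictly better. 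Your degree-$1$/degree-$\ge 2$ remark points in the right direction but does not by itself establish this inequality.
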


\begin{proof}The positions of the balls from the edges $e_i$, $i\in[n_1+\cdots+n_{j}+1,n_1+\cdots+n_{j}+n_{j+1}]$ in $H_{A',j,t}$ are
\begin{equation}\label{eq:balls_in_ap}
n_1+\ldots+n_{j}-1,\;\ldots,\;t-1,\qquad 1+(t-d),\; t+2-1-1,\;\ldots,\;n_1+\ldots+n_{j+1}-1-1
\end{equation}
whereas for $H_{A,j}$ are
\begin{equation}\label{eq:balls_in_a}
n_1+\ldots+n_{j}-1,\ldots,n_1+\ldots+(n_{j+1}-1)-1,n_1+\ldots+n_{j+1}-1-d'
\end{equation}
where $d'$ 
is the degree of the vertex with smaller degree with the condition  $d'\geq 2$ in $\{e_1,\ldots,e_{n_1+\ldots+n_{j+1}-1}\}\subset E(H_{A,j})$ 
(the vertex with minimum degree $d'$ is $v_{s-1}$, where $s$ is the size of the edge $e_{n_1+\ldots+n_{j+1}-2}$).

There are now several cases involving the relation between $t+1$ as defined in Section~\ref{sec:cons_a_b}, and $n_1+\cdots+n_{j+1}$. These cases are used later as well.
\begin{enumerate}[label=(C\arabic*)]
		\item \label{en:alc1} $e_{t+1}$ is the first edge added to $H^{(j+1)}$ with respect to $H^{(j)}$. There are two cases:
		\begin{enumerate}[label=(C\arabic{enumi}.\alph*)]
			\item \label{en:alc11} The previous jump (in the $k$-binomial decomposition $(b_0,\ldots,b_{k-1})$), which is also larger than $1$, has difference equal to $2$. (So there exists an $s\geq 1$ for which $b_{j-i}-b_{j-1-i}=1$ for all  $i\in [1,s-1]$, and then $b_{j-s}-b_{j-1-s}=2$.)
			\item\label{en:alc12} The previous jump (in the $k$-binomial decomposition) larger than $1$ is strictly larger than $2$.
			(So there exists an $s\geq 1$ for which $b_{j-i}-b_{j-1-i}=1$ for all  $i\in [1,s-1]$, and then $b_{j-s}-b_{j-1-s}\geq 3$.)
		\end{enumerate}
		\item \label{en:alc2}  $e_{t+1}$ is the second edge added to $H^{(j+1)}$ with respect to $H^{(j)}$.
		\item \label{en:alc3} $e_{t+1}$ is at least the third edge added to $H^{(j+1)}$ with respect to $H^{(j)}$.
\end{enumerate}

\noindent The positions of the balls in \eqref{eq:balls_in_ap} and \eqref{eq:balls_in_a} is different, but they are equivalent in the cases:
\begin{itemize}
	\item Case~\ref{en:alc3} and Case~\ref{en:alc2} if there are only two edges of size $j+1$ (so Case~\ref{en:alc3} does not occur),
	\item Case~\ref{en:alc11} and Case~\ref{en:alc12} if there is only one edge of degree $j+1$.
\end{itemize}
	In the rest of the cases, the position of the balls is strictly more favorable to $H_{A,j}$ versus $H_{A',j,t}$. The remaining of the argument is similar as in Proposition~\ref{prop:conap_best} (namely the addition of edges whenever needed to $H_{A,j}^{(r)}$ with respect to $H_{A',j,t}^{(r)}$); as all the paired balls are weakly to the right in $H_{A,j}^{(r)}$ with respect to those in $H_{A',j,t}^{(r)}$, if $H_{A',j,t}$ is extremal (the wall satisfies $w_k\leq n-k$), then so is the corresponding $H_{A,j}$.
\end{proof}

\subsubsection{If \texorpdfstring{$S$}{S} satisfies \texorpdfstring{\ref{exB}}{21}, then Construction \texorpdfstring{$B$}{B} from Section~\ref{sec:cons_a_b} is \emph{better}}

\begin{proposition}
	 \label{prop:conb_best}
	If $S$ is an extremal family of depth $k-j$ with hypergraph $H$ that satisfies \ref{exB} for $H^{(j+1)}$ (actually for $\{e_1,\ldots,e_{t+1}\}$, with $t=\text{st}(E(H))$), then there exists a sequence $(n_1',\ldots,n_{k}')$ of non-negative integers with the property that the family induced by $H_{B',j,t}(n_1',\ldots,n_{k}')$ is extremal and has the same $k$-binomial decomposition as $S$.
\end{proposition}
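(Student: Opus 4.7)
The strategy mirrors that of Proposition~\ref{prop:conap_best}: compare the bins-balls-wall configurations of $H$ and $H_{B',j,t}(n_1',\ldots,n_k')$ iteration by iteration, and inductively choose $n_{j+2}',\ldots,n_k'$ so that the two wall sequences coincide. The first $t$ edges of both hypergraphs form the same initial colex segment and hence generate the same balls (no descendants left by iteration $|e_t|$, wall at position $t$). Thus the only point where the two hypergraphs differ combinatorially is in the blocking sets $B_{t+1}$ and in the later edges of sizes $j+2,\ldots,k$, whose multiplicities $n_{j+2}',\ldots,n_k'$ are parameters at our disposal.

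First, I would catalogue the balls produced by $H$ at delay $|e_{t+1}|$. By the case analysis of Claim~\ref{cl:edges_in_non_colex}, we are in one of the subcases \ref{exb.1}, \ref{exb.2.1}, or \ref{exb.2.2}. In each of them one shows that every element of $B_{t+1}$ has size exactly $t$ (because each $e_i$, $i\leq t$, has a degree-$1$ vertex not covered by $e_{t+1}$), so the root dominating set of $T_{e_{t+1}}$ contributes a ball at position $|s_1|-|\emptyset|=t$ with delay $|e_{t+1}|$. Subcases \ref{exb.1} and \ref{exb.2.1} give a single such ball, whereas \ref{exb.2.2} gives at least two balls at position $t$ (one per distinct minimal blocking set obtained by swapping $w_1$ with $w_2$). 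The analogous analysis for $H_{B',j,t}$, exploiting that $e_{r+1}'$ swaps $v_t$ for $v_{j+1}$ and therefore misses $v_t$ while still meeting all of $v_1,\dots,v_{t-1}$, shows that the tree $T_{e_{r+1}'}$ produces a ball at position exactly $t$ with delay $|e_{r+1}'|=j+1$, i.e.\ in the very same place as the \emph{canonical} ball of $H$. Together with Claim~\ref{cl:pos_balls}, which bounds the positions of balls produced by later edges by $i-2$, this gives a pairing in which every ball of delay $\leq |e_{t+1}|$ in $H_{B',j,t}$ is weakly to the right of a ball of the same delay in $H$.

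Next I would run the iterative matching from the proof of Proposition~\ref{prop:conap_best} essentially verbatim. Starting from $H_{B',j,t}(n_1,\ldots,n_{j+1},0,\ldots,0)$, at each delay $d=j+2,j+3,\ldots,k$ one compares the multiset of balls of delay $d$ in $H$ with that in the current $H_{B',j,t}$, adds exactly $n_d'$ edges of size $d$ (chosen greedily in supercomfortable order) so that the newly introduced balls match unpaired balls of $H$, and checks that each new ball is weakly to the right of its pair. Three priority classes of unpaired balls (descendants of previously paired balls, root balls of earlier edges with $|B_i|\geq 2$, and root balls of edges of size $d$ in $H$) must be handled exactly as in Proposition~\ref{prop:conap_best}; the monotonicity of positions is preserved because every added edge $e_{i'}'$ at position $i'-2$ sits to the right of the targeted ball, which is either at position $\leq i-2\leq i'-2$ (earlier edges) or inherited from a parent pair that was already strictly to the right.

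The argument terminates because the bound $w_k\leq n-k$ (Remark~\ref{rmk:extremal_wall}) ensures that only $O(n)$ edges are ever added, so $H_{B',j,t}$ always admits a vertex of degree zero at which to attach the next edge. Once the matching succeeds through iteration $k$, the walls of $H$ and of $H_{B',j,t}(n_1',\ldots,n_k')$ agree at every step, whence by Proposition~\ref{prop.translation}~\ref{en:prop_trans_2} the two families have the same shadow $k$-binomial decomposition; by Theorem~\ref{thm:charac} the family $H_{B',j,t}(n_1',\ldots,n_k')$ is extremal. The main obstacle is the bookkeeping in subcase \ref{exb.2.2}, where the root of $T_{e_{t+1}}$ contributes multiple balls at position $t$ whose descendants must also be matched; I expect this to be resolved by simply choosing the ordering of the blocking sets in $B_{t+1}'$ so that the (unique) root ball of $H_{B',j,t}$ is paired first with one of the $H$-balls, while the remaining duplicates in $H$ are absorbed by root balls of later edges or by increasing $n_{j+2}'$ by the surplus, exactly as in the treatment of the second priority class above.
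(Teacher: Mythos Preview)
Your overall strategy (mirror Proposition~\ref{prop:conap_best}, pair balls, add edges greedily) is the right one, and the paper does exactly this. But the detailed combinatorics you import from case~\ref{exA} do not survive the passage to case~\ref{exB}, and this is where your argument breaks.

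First, you invoke Claim~\ref{cl:pos_balls} to bound the positions of later balls by $i-2$. That claim rests on the existence of an edge $e\in\{e_1,\dots,e_t\}$ all of whose vertices have degree $\ge 2$ in $\{e_1,\dots,e_{t+1}\}$; this is precisely the content of~\ref{exA} and is \emph{false} under~\ref{exB}. Consequently, in case~\ref{exB} the blocking sets in $B_r$ can have size $r-1$, and the balls sit at position $r-1$, not $r-2$. Likewise, the edges you add to $H_{B',j,t}$ of size $>j+1$ produce a ball at position $i'-1$, not $i'-2$. Your ``weakly to the right'' pairing therefore needs a different justification.

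Second, you assert that the tree $T_{e'_{r+1}}$ in $H_{B',j,t}$ produces a single ball, and that in subcases~\ref{exb.1} and~\ref{exb.2.1} the tree $T_{e_{t+1}}$ in $H$ does too. Both are wrong. The paper's key observation (its Claim~\ref{cl.4}) is that under~\ref{exB} one always has $|e_t\setminus e_r|\ge 2$ for every $r\in[t+1,n_1+\cdots+n_{j+1}]$, so each such $B_r$ contains at least two minimal dominating sets (built from the two distinct uncovered vertices $w_{r,1},w_{r,2}$ of $e_t$). If both have size $r-1$, ordering them first and second in $T_{e_r}$ yields \emph{two} balls: one at position $r-1$, delay $|e_r|$, and one at position $r-1$, delay $|e_r|+1$. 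The same two-ball structure holds exactly for the size-$(j+1)$ edges of $H_{B',j,t}$, and the matching pairs these two-by-two; the extra delayed ball is what compensates for the loss of the $i-2$ bound. Without this two-ball mechanism you cannot make the walls agree, because $H_{B',j,t}$ genuinely produces more balls at delay $j+2$ than your description accounts for.

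In short: replace the appeal to Claim~\ref{cl:pos_balls} by the two-uncovered-vertices argument (Claim~\ref{cl.4}), recognise that both $H$ and $H_{B',j,t}$ emit two balls per size-$(j+1)$ edge (or one ball at a strictly smaller position when a blocking set of size $<r-1$ exists), and redo the pairing accordingly. The paper then runs the same case analysis \ref{en:alc11}--\ref{en:alc3} as in Proposition~\ref{prop:cona_best}; the rest of your outline (greedy addition of edges, termination via Remark~\ref{rmk:extremal_wall}) goes through once the ball inventory is correct.
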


\begin{proof}[Proof of Proposition~\ref{prop:conb_best}]
	Let $(n_1,\ldots,n_{k})$ be the sequence of the number of edges of each size for $H$. We use the vertex notation from Section~\ref{sec:hypergraph_of_colex}. The following claim follows from examining the case analysis in \ref{exB}.
	\begin{claim} \label{cl.4}
		If condition \ref{exB} is to be satisfied, then $|e_{t}\setminus e_{t+1}|\geq 2$.
	\end{claim}

\noindent Claim~\ref{cl.4} could be applied for all the other edges $e_r$, $r\in[t+1,n_1+\cdots+n_{j+1}]$, and thus we conclude that each edge $e_r$, $r\in[t+1,n_1+\cdots+n_{j+1}]$ is such that $e_r$ does not cover $e_t$ in, at least, two vertices.

This implies that, for each $r\in[t+1,n_1+\cdots+n_{j+1}]$, the family of dominating sets $B_r$ will have at least two sets $P_1$, $P_2$, one containing a subset of $u_1,\ldots,u_{t-1},w_{r,1}$ and another containing a subset of $u_1,\ldots,u_{t-1},w_{r,2}$, where $w_{r,1}$ and $w_{r,2}$ are two of the vertices in $e_t$ avoided by $e_r$; the first subset always contains $w_{r,1}$ and the second always contains $w_{r,2}$.
	
Let us further assume that these two sets $P_1$ and $P_2$ have both size $r-1$. In that case, we can further assume that $P_1$ is obtained from $P_2$ by exchanging $w_{r,1}$ with $w_{r,2}$. Then, we can order the dominating sets in $B_r$ to make $P_1$ and $P_2$ the first and second dominating sets; this ordering implies that the tree corresponding to $e_{r}$ has (at least) two leaf-to-root paths, one corresponding to the binomial coefficient $\binom{n-|e_r|-(r-1)}{k-|e_r|}$ (ball at position $r-1$ and with delay $|e_r|$) and another one to the binomial coefficient $\binom{n-|e_r|-(r-1)-1}{k-|e_r|-1}$ (ball at position $r-1$ and with delay $|e_r|+1$). 
	The other case is that one of these sets have size $<r-1$; then we place such set as the first dominating set in $B_r$, and this is associated with a binomial coefficient $\binom{n-|e_r|-s}{k-|e_r|}$ (ball at position $s$ with $s<r-1$ and with delay $|e_r|$).

	Now we do a case analysis to construct $H_{B',j,t}$ and compare it with $H$ following the cases for the edges $e_{t+1}$ in $H$ described in the proof of Proposition~\ref{prop:cona_best}. The argument works with $n_1=n_1',\ldots,n_{j+1}=n_{j+1}'$, and some $n_i'\geq n_i$, for $i\geq [j+2,k]$, to be determined depending by $H$, and given by the procedure described below.
	
	\medskip
		\noindent 
		\emph{Case~\ref{en:alc11}.} For $t+1$, and with an appropriate ordering 
		(and using the arguments just described), 
		$e_{t+1}$ 
		induces (at least) two balls (for $H$); one associated to the binomial coefficient $\binom{n-|e_{t+1}|-s}{k-|e_{t+1}|}$ with $s\leq t$ (associated to the dominating set
		\[
		[\text{$v_{i}$ for some $i\leq j-1$}]\;  \cup \; \{u_{j'}\}_{e_{j'} \text{ not containing }v_i},\] and another ball given by the binomial coefficient $\binom{n-|e_{t+1}|-t-1}{k-|e_{t+1}|-1}$ (associated to the dominating set 
		\[\{u_1,\ldots,u_t\},\] where, using the notation from \eqref{eq:picked_avoided}, we have ``picked'' $v_i$).
		These two binomial coefficients induce, respectively, a ball at position $s\leq t$ and delay $|e_{t+1}|$, and a ball at position $t$ with delay $|e_{t+1}|+1$ (the $+1$ occurs as one vertex, $v_i$ has been ``picked'').

		In  $H_{B',j,t}$, $e_{t+1}$ gives exactly two dominating sets which, if ordered appropriately, are associated to the binomial coefficient 
		$\binom{n-|e_{t+1}|-t}{k-|e_{t+1}|}$ and $\binom{n-|e_{t+1}|-t-1}{k-|e_{t+1}|-1}$ which gives, respectively, a ball at position $t$ with delay $|e_{t+1}|$, and a ball at position $t$ with delay $|e_{t+1}|+1$.
		
		For $r\in(t+1,n_1+\cdots+n_{j+1}]$, $e_r\in E(H)$ induces at least one of the following two options.
		\begin{itemize}
			\item If both blocking sets $P_1$ and $P_2$ described above have exactly size $r-1$, then we obtain two balls/binomial coefficients: one ball is at position $r-1$ and has delay $|e_r|$, and another ball is at position $r-1$ and delay $|e_r|+1$. These are associated to the two binomial coefficients: $\binom{n-|e_r|-(r-1)}{k-|e_r|}$ and  $\binom{n-|e_r|-(r-1)-1}{k-|e_r|-1}$.
			\item Otherwise, we obtain a ball at position $s<r-1$ and delay $|e_r|$ associated to a binomial coefficient $\binom{n-|e_r|-(s)}{k-|e_r|}$.
		\end{itemize} 
		On the other side in $H_{B',j,t}$, $e_{r}$, $r\in(t+1,n_1+\cdots+n_{j+1}]$, gives exactly two dominating sets which are associated to the binomial coefficients 
		$\binom{n-|e_{r}|-(r-1)}{k-|e_{r}|}$ and $\binom{n-|e_{r}|-(r-1)-1}{k-|e_{r}|-1}$ and that give a ball at position $r-1$ and delay $|e_r|$, and a ball at position $r-1$ and delay $|e_r|+1$ respectively.
		
		In particular, we see that, at the delay $j+1=|e_{t+1}|$, both configurations have the same number of balls, with the balls from $H_{B',j,t}$ being weakly to the right of the corresponding balls in $H$.
		For larger delays, the balls in $H$ either are matched with the balls from $H_{B',j,t}$, or they create a ball that compensates (it is weakly to the left) the corresponding ball from $H_{B',j,t}$ (in the case that $B_r$ has a blocking set of size $<r-1$).

	The terms $n_i', i\in[j+2,k]$, for $H_{B',j,t}$ are found using the same strategy as in Proposition~\ref{prop:conap_best}. However, each additional edge $e_{i'}'$ for $H_{B',j,t}$ gives exactly one ball at position $i'-1$ and delay $|e_{i'}'|$, which are always weakly to the left with the ball being paired with coming from $H$.
		
		\medskip
		\noindent \emph{Case~\ref{en:alc12}} is similar as Case~\ref{en:alc11}, but this time we have one ball at position $t-d+1$, $d\geq 2$, the degree of $v_{|e_{t}|-1}$ for the construction $B'$, and one at position $t$ and delay $j+1=|e_{t+1}|$, while in the other case the degree is also $t-d'+1$ with $d'\geq d\geq2$ for $e_{t+1}$, and a ball at position $t$ and delay $j+1=|e_{t+1}|$. For the rest of the edges $e_{r}$, we have at least a ball at position at most $r-1-d+1$, $d\geq 2$, for $H$, and two balls at position $r-1$ (one with a delay of $j+1=|e_{t+1}|$) in the case of $H_{B',j,t}$, which is better than for $H$. The remaining sizes of edges are handled as in the previous case.

		\medskip
		\noindent \emph{Case~\ref{en:alc2}} is similar to \ref{en:alc11}, and \emph{Case~\ref{en:alc3}} is similar to \ref{en:alc12}.
\end{proof}

\paragraph{Construction $B$ is \emph{better} than Construction $B'$.}
Lets observe how we can pair the balls from $H_{B,j}$ with those from $H_{B',j,t}$.
$H_{B,j}$ give balls associated with the binomial coefficients:
\begin{align}
&\binom{n-|e_{n_{1}+\ldots+n_{j+1}}|-(n_1+\ldots+n_j)}{k-|e_{n_{1}+\ldots+n_{j+1}}|},\binom{n-|e_{n_{1}+\ldots+n_{j+1}}|-(n_1+\ldots+n_j+n_{j+1}-1)-1}{k-|e_{n_{1}+\ldots+n_{j+1}}|-1}, \nonumber \\
&\qquad \left\{
\binom{n-|e_{j}|-(j-1)}{k-|e_{j}|}\right\}_{j\in [n_{1}+\ldots+n_{j}+1,\;n_{1}+\ldots+n_{j+1}-1]\sqcup [n_{1}+\ldots+n_{j+1}+1,\;n_{1}+\ldots+n_{k}] }\nonumber
\end{align}
Note that the coefficients 
\[\left\{
\binom{n-|e_{j}|-(j-1)}{k-|e_{j}|}\right\}_{j\in [n_{1}+\ldots+n_{j},\;n_{1}+\ldots+n_{j+1}-1]\sqcup [n_{1}+\ldots+n_{j+1}+1,\;n_{1}+\ldots+n_{k}] }\] have a matching one in $H_{B',j,t}$.
Then the coefficients \[\binom{n-|e_{n_{1}+\ldots+n_{j+1}}|-(n_1+\ldots+n_j)}{k-|e_{n_{1}+\ldots+n_{j+1}}|},\binom{n-|e_{n_{1}+\ldots+n_{j+1}}|-(n_1+\ldots+n_j+n_{j+1}-1)}{k-|e_{n_{1}+\ldots+n_{j+1}}|-1}\]
when compared with the unmatched $H_{B',j,t}$, which are
{\small\begin{align}\binom{n-|e_{t+1}|-(t)+d'-1}{k-|e_{t+1}|}&,\binom{n-|e_{t+1}|-(t)-1}{k-|e_{t+1}|-1},
	\binom{n-|e_{t+2}|-(t+1)-1}{k-|e_{t+2}|-1},\ldots,\nonumber \\
	&\qquad \binom{n-|e_{n_{1}+\ldots+n_{j+1}}|-(n_1+\ldots+n_{j+1}-1)-1}{k-|e_{n_1+\ldots+n_{j+1}}|-1} \nonumber 
	\end{align}}
where $d'$ is the degree of $v_{|e_{t}|-1}$ in $\{e_1,\ldots,e_t\}$, which equals $t-(n_1+\ldots+n_j-1)$, so we have 
{\small\begin{align}\binom{n-|e_{t+1}|-(n_1+\ldots+n_j)}{k-|e_{t+1}|}&,\binom{n-|e_{t+1}|-(t)-1}{k-|e_{t+1}|-1},
	\binom{n-|e_{t+2}|-(t+1)-1}{k-|e_{t+2}|-1},\ldots, \nonumber \\
	&\qquad\binom{n-|e_{n_{1}+\ldots+n_{j+1}}|-(n_1+\ldots+n_{j+1}-1)-1}{k-|e_{n_1+\ldots+n_{j+1}}|-1} \nonumber
	\end{align}}
and then we see that $H_{B,j}$ creates fewer balls at step $j+1$, as it does create balls at the same position as $H_{B',j,t}$ but with no delayed balls.
Then we follow the same procedure as with Proposition~\ref{prop:conb_best} but for $H_B$ instead as for $H_{B'}$, thus adding edges as necessary. All the edges added to construction $B$ are always weakly to the right of those balls that are being matched from the $B'$ construction.

\subsubsection{Algorithmic complexity of the decision problem}

After showing Propositions~\ref{prop:conap_best}, \ref{prop:cona_best}  \ref{prop:conb_best}, and the last comment on how the Construction $B$ is better than Construction $B'$, the only part of Theorem~\ref{thm:decision} left to shown is the algorithmic part.

The algorithm works as follows. With the desired depth $k-j$ and the desired $k$-binomial decomposition given, we first try the construction $A$, adding edges as necessary to adjust the shadow $k$-binomial decomposition terms to the $k$-binomial decomposition desired. The following are the things we check.
\begin{itemize}
	\item If, at some point in iteration $i$ (either by processing the previous obtained balls, or when adding the balls associated to the new edges), we obtain a wall that is strictly below $n-i$, then we stop the process and determine that there is no extremal family following the contruction $A$.
	\item  If $i<k$ and we are already creating enough balls so that, in the next iteration, the wall will go below $n-(i-1)$, then we also stop the process and determine that there is no extremal family following the contruction $A$.
	\item  If, when processing the previouly found balls at step $i$, and before adding new edges, the wall goes below the desired term in the $k$-binomial decomposition, then we stop the process and determine that there is no extremal family following the contruction $A$ with the desired $k$-binomial decomposition.
\end{itemize} 
If the process finishes and we obtain a valid construction for $A$, then we have obtained an extremal family with the desired depth and $k$-binomial decomposition.
If the process for $A$ has failed, we try the construction $B$ following an analogous procedure and stopping conditions but for Construction $B$ instead than Construction $A$. If the process finishes and we obtain a valid construction for $B$, then we have obtained an extremal family with the desired depth and $k$-binomial decomposition. If the process for $B$ has also failed, then we determine that, for the depth $k-j$ and the $k$-binomial decomposition given, there does not exists any extremal family.

The correctness of the algorithm follows from Propositions~\ref{prop:conap_best}, \ref{prop:cona_best}  \ref{prop:conb_best} and the last comment on how the Construction $B$ is better than $B'$.

Regarding the running time, we observe that all we are doing is placing and processing balls using the bins-balls-wall process. There are at most $n$  balls at each step (if the family is extremal then there are at most $n$ balls to be processed at each step, and if there is a step in which the number of balls increases past $n$, then we can determine that the family is not extremal as the wall will go pass $n-k$, and the process is then stopped, also note that, in the last step $k$ we shall not create the new balls, since there is no further step), and there are at most, $k$ iterations, then the time $O(nk)$ follows.

This finishes the proof of Theorem~\ref{thm:decision} and Theorem~\ref{thm:alg}.

\section{On counting extremal families} \label{sec:count}

We give now some comments on counting the number of extremal families.

In this work we have given some results that translate properties from extremal families into information of their hypergraph.
If $H$  is the hypergraph of $S\subseteq \binom{[n]}{k}$ ($S$ not necessarily extremal), and has $(n_1,\ldots,n_k)$ as its sequence of edge sizes, then: the subhypergraph $\overline{H}$ of $H$ 
giving an initial segment of the colex order and whose sequence of number of edges  per size, $(n_1',\ldots,n_k')$, is maximal in the lexicographical order, is said to be the \emph{maximal colex subhypergraph of $H$}.
Using the work previously presented we have the following properties:
\begin{itemize}
	\item If $S$ is extremal, $n_i=n_i'$ $i\leq c\cdot \log\log(n)$, by Theorem~\ref{thm:bound_depth}.
	\item Following the notation of vertices $u_i$ and $v_i$ from Section~\ref{sec:hypergraph_of_colex}, and if $e$ is an edge of size $j$ in $H$ but not in $\overline{H}$, then there is a vertex in $v_1,\ldots,v_{j-1}$ that $e$ does not cover. This implies that $e$ generates a ball at certain position with respect to $\overline{H}$.
	\item From all the subhypergraphs of $H$, $\overline{H}$ is the one that gives the smallest segment in the colexicographical order. Since $\overline{H}$ is a subhypergraph of $H$, the family of $\overline{H}$ contains $S$.
\end{itemize}
If we want $S$ to be an extremal family, then there are some restriction on the $n_i'$ and the possible extensions of $\overline{H}$ into $H$, a hypergraph with $n_i$ being  the number of edges with size $i$, $i\in[k]$. \footnote{The family of $\overline{H}$ is the smallest initial segment of the colex order that contains $S$ and that it comes from a subhypergraph of $H$. In general, $\overline{H}$ is not the hypergraph of the smallest initial segment in the colex order that contains $S$. For that we have to do the following. Let $H'=H'(n_1',\ldots,n_k')$ be the hypergraph with $\{f_i\}$ as its edge set; then $H'$ is said to be \emph{valid for $H$} if, for each $i$, there exists a $j\in [|E(H)|]$ such that $e_j\subset f_i$ (each edge $f_i$ contains an edge in $H$). The \emph{minimal colex containing $S$} is the valid hypergraph $H'$ that is the hypergraph of the colex, and that $(n_1',\ldots,n_k')$ is maximal in the lexicographical order from among all valid hypergraphs giving initial segments of the colex order.}
Assume that the edges are in a comfortable ordering and that the first $n_i'$ edges from $[n_1+\ldots+n_{i-1}+1,n_1+\ldots+n_{i-1}+n_i']$ are precisely those from $\overline{H}$. Then the other edges of size $i$ do not contain any previous edge of size $\leq i$. Further, we should account for the balls induced and their descendants (which grow at a doubly exponential rate), and the total number of edges (actually, of balls) is bounded by $n-k$.

This argument points towards saying that there there many restrictions for a family being extremal, and even more if the depth of the family is high. However, coming with a precise counting seems a difficult task, especially if one considers the possible automorphisms of the family. In the next section we present to some results when the families are considered without isomorphisms (so that there is a unique initial segment of the colex order in $\binom{[n]}{k}$).

\subsection{Counting without isomorphisms}

Theorem~\ref{thm:count_1} in the Introduction, which we now proceed to prove, shows that most of the extremal families are obtained by removing some sets from the initial segment in the colex oder. These are the extremal families constructed using \cite[Proposition~2.5-2.4]{furgri86} when the initial family from which we subtract some sets is the initial segment in the colex order.

\begin{proof}[Proof of Theorem~\ref{thm:count_1}]
Using the arguments in Proposition~\ref{prop.translation} and the relation between the walls, the balls and the $k$-binomial decomposition (which in this case is essentially the fact that there are no hyperedges inside each other), each time we have a hypergraph with $n_i$ edges of size $i$, we conclude that the number of edges of size $j$ that we can choose is
\[
\leq \binom{n-n_1-1}{j}+\binom{n-n_1-n_2-2}{j-1}+\cdots+\binom{n-n_1-\cdots-n_{j-1}-(j-1)}{2}+\binom{n-n_1-\cdots-n_{j-1}-(j-1)}{1}
\]
Further, as the hypergraph of an extremal family has, at most, $n-k$ edges, then the number of hypergraph corresponding to extremal families is, at most,
\begin{align} 
&\sum_{\substack{n_1,\ldots,n_k\in \mathbb{Z}, n_i\geq 0 \\ n_1+\cdots+n_k\leq n-k}} \binom{n}{n_1} \binom{\binom{n-n_1}{2}}{n_2}
\binom{\binom{n-n_1-1}{3}+\binom{n-n_1-n_2-1}{2}}{n_3} \cdots \nonumber\\
&\qquad\qquad\cdots \binom{\binom{n-n_1-1}{k}+\binom{n-n_1-n_2-2}{k-1}+\cdots+\binom{n-n_1-\cdots-n_{k-1}-(k-2)}{2}}{n_k} :=\nonumber \\
&\qquad:=\sum_{\substack{n_1,\ldots,n_k\in \mathbb{Z}, n_i\geq 0 \\ n_1+\cdots+n_k\leq n-k}} f_{1,k}(n_1,\ldots,n_k)
\label{eq:upper_count}
\end{align}
Let us also define:
\begin{align}
f_{3,k}(n_1,\ldots,n_{k-1})&:=\binom{n-n_1-1}{k}+\binom{n-n_1-n_2-2}{k-1}+\cdots\nonumber \\
&\qquad \cdots+\binom{n-n_1-\cdots-n_{k-2}-(k-2)}{3}+\binom{n-n_1-\cdots-n_{k-1}-(k-2)}{2} \nonumber 
\end{align}

Now, compare \eqref{eq:upper_count} with just the instances in which the edges up to size $k-1$ are placed to give the initial segment in the colex order. Since we are assuming that the elements of $[n]$ are labelled, there is only one such configuration, and thus we obtain
{\small\begin{equation} \label{eq:count_rem_colex}
\sum_{\substack{n_1,\ldots,n_k\in \mathbb{Z}, n_i\geq 0 \\ n_1+\cdots+n_k\leq n-k}} \binom{\binom{n-n_1-1}{k}+\binom{n-n_1-n_2-2}{k-1}+\cdots+\binom{n-n_1-\cdots-n_{k-1}-(k-2)}{2}}{n_k}:=\sum_{\substack{n_1,\ldots,n_k\in \mathbb{Z}, n_i\geq 0 \\ n_1+\cdots+n_k\leq n-k}} f_{2,k}(n_1,\ldots,n_k)
\end{equation}}
This corresponds to remove $n_k$ edges from the initial segment of the colex order.
The statement of the proposition is them implied by the following claim. For any fixed $k\geq 0$,
\[
\left[\sum_{\substack{n_1,\ldots,n_k\in \mathbb{Z}, n_i\geq 0 \\ n_1+\cdots+n_k\leq n-k}} f_{1,k}(n_1,\ldots,n_k)\right]\Big/\left[\sum_{\substack{n_1,\ldots,n_k\in \mathbb{Z}, n_i\geq 0 \\ n_1+\cdots+n_k\leq n-k}} f_{2,k}(n_1,\ldots,n_k)\right]\stackrel{n\to \infty}{\to} 1
\]
Indeed, for any $j\leq n-k$, we have
{\small \begin{align}
\sum_{\substack{j>n_i\geq 0 \\ n_1+\cdots+n_k=j}} &f_{1,k}(n_1,\ldots,n_k)=
\sum_{\substack{n_k=t,t\in[0,j)\\ n_1+\cdots+n_k=j}} f_{1,k}(n_1,\ldots,n_{k-1},t) \nonumber \\
&\leq 
\sum_{n_k=t,t\in[0,j)} \binom{j-t+(k-2)}{k-2} f_{1,k}(0,\ldots,0,j-t,t) \nonumber \\
%
&= \sum_{n_k=t,t\in[0,j)} \binom{j-t+(k-2)}{k-2} \binom{\binom{n}{k-1}}{j-t}  \binom{f_{3,k}(j-t)}{j}
\frac{j \cdots (t+1)}
{(f_{3,k}(j-t)-t)(f_{3,k}(j-t)-t-1)\cdots (f_{3,k}(j-t)-j+1)} \nonumber \\
&\qquad \text{with }f_{3,k}(j-t)=\binom{n-1}{k}+\binom{n-2}{k-1}+\cdots+\binom{n-(k-4)}{3}+\binom{n-(k-4)-(j-t)}{2} \nonumber \\
&\stackrel{k \text{ is fixed}, n\to \infty}{\approx} \binom{f_{3,k}(j-t)}{j}
 \sum_{n_k=t,t\in[0,j)}
  \frac{n^{(k-1)(j-t)}}{n^{k(j-t)}}(j-t)^{k-2} \nonumber \\
 &=\binom{f_{3,k}(j-t)}{j} \sum_{n_k=t,t\in[0,j)}   \frac{1}{n^{(j-t)}}(j-t)^{k-2}= \binom{f_{3,k}(j-t)}{j} \sum_{t=0}^{j-1}   \frac{1}{n^{(j-t)}}(j-t)^{k-2} \nonumber 
\end{align}}
which goes to zero as $n\to \infty$, thus showing the result. Indeed, if $\frac{a_{i}(n)}{b_i(n)}\to_{n\to \infty} 1$ with a certain, uniform rate, then $\frac{\sum_i a_i(n)}{\sum_i b_i(n)}\to_{n\to \infty} 1$, and we are observing that the term when $n_k$ is not maximal can be subsummed, multiplicatively, to the term where $n_k$ is maximal.
\end{proof}

If we restrict ourselves to families having the same $k$-binomial decomposition, then we can show the following.
\begin{proposition} \label{prop:count_3}
	Consider the families as labelled. Fix $k>0$ and let $t_0,\ldots,t_{k-1}$ be some positive integers.
	Let $\mathcal{E}(t_0,\ldots,t_{k-1})$ be the set of extremal families in $\binom{[n]}{k}$ that have $k$-binomial decomposition:
	\[
	\binom{n-t_0-1}{k}+\cdots+\binom{n-t_{k-2}-(k-1)}{2}+\binom{n-t_{k-1}-(k-1)}{1}
	\]
	Let $\mathcal{E}_0(t_0,\ldots,t_{k-1})$ be the set of extremal families in $\mathcal{E}(t_0,\ldots,t_{k-1})$ that are obtained from the initial segment in the colex order by removing some edges of size $k$. Then
	\[
	\lim_{n\to\infty} \frac{|\mathcal{E}(t_0,\ldots,t_{k-1})|}{|\mathcal{E}_0(t_0,\ldots,t_{k-1})|}=1
	\]
\end{proposition}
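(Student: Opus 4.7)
The plan is to adapt the counting argument of Theorem~\ref{thm:count_1} to the restricted setting of a prescribed $k$-binomial decomposition. Writing $a_i=n-t_i-(i+1)$ for $i\le k-2$ and $a_{k-1}=n-t_{k-1}-(k-1)$, a family in $\mathcal{E}_0(t_0,\ldots,t_{k-1})$ is obtained from the initial colex segment of size $M_{\max}=\binom{a_0}{k}+\cdots+\binom{a_{k-2}}{2}+(a_{k-2}-1)$ by removing $a_{k-2}-1-a_{k-1}$ of the last $a_{k-2}-1$ $k$-sets (those sharing the $(k-1)$-prefix $\{a_0+1,\ldots,a_{k-2}+1\}$). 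A direct count gives
\[
|\mathcal{E}_0(t_0,\ldots,t_{k-1})|=\binom{a_{k-2}-1}{a_{k-1}}=\binom{n-t_{k-2}-k}{t_{k-1}-t_{k-2}-1},
\]
which grows as $\Theta(n^{t_{k-1}-t_{k-2}-1})$ as $n\to\infty$.

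To bound $|\mathcal{E}(t_0,\ldots,t_{k-1})|$ from above, I would use the hypergraph description of extremal families via the bin-balls-wall process of Section~\ref{sec:hypotenusal}. By Proposition~\ref{prop.translation}, an extremal family with the given decomposition has a hypergraph whose walls after iterations $1,\ldots,k$ are exactly $t_0,\ldots,t_{k-1}$. Stratifying by the edge-count vector $(n_1,\ldots,n_k)$, the \emph{canonical} choice $n_i=t_{i-1}-t_{i-2}$ (with the convention $t_{-1}=0$) corresponds to the colex structure: each initial ball is placed precisely at the wall, no descendant balls are generated, and the number of labelled hypergraphs of this shape matches $|\mathcal{E}_0|$. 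Any non-canonical configuration generates initial balls strictly to the left of the wall, producing descendants that must be absorbed by reducing the number of edges at some later level in order to keep the final wall at $t_{k-1}$.

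Finally, I would bound the total non-canonical contribution by mirroring the ratio estimate at the end of the proof of Theorem~\ref{thm:count_1}. Each non-canonical size-$i$ edge ($i<k$) placed away from its colex position creates at least one extra descendant ball, which forces the removal of at least one size-$k$ edge from the admissible pool; since that pool has $\Theta(n^{k-1})$ elements whereas the positions for a size-$i$ deviation number only $O(n^{i-1})$, each deviation loses a factor of at most $1/n$ relative to the canonical count. Summing over all non-canonical configurations yields
\[
|\mathcal{E}(t_0,\ldots,t_{k-1})|-|\mathcal{E}_0(t_0,\ldots,t_{k-1})|=O\!\left(\frac{|\mathcal{E}_0(t_0,\ldots,t_{k-1})|}{n}\right),
\]
which produces the stated limit. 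The main technical obstacle is the precise accounting of these compensations: tracking which size-$k$ edges remain admissible once a non-canonical size-$i$ edge has been placed requires combining Proposition~\ref{prop.translation} with the monotonicity of the shadow $k$-binomial decomposition (Theorem~\ref{thm:main}) to exclude configurations whose final wall would overshoot $t_{k-1}$.
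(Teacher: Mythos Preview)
Your overall strategy—adapt the counting of Theorem~\ref{thm:count_1} to a fixed $k$-binomial decomposition—matches the paper's one-line proof. However, the explicit count of $|\mathcal{E}_0(t_0,\ldots,t_{k-1})|$ is wrong, and this breaks the rest of the argument.

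The error is in restricting the removable $k$-sets to the ``top layer'' $\{a_0+1,\ldots,a_{k-2}+1\}\vee[a_{k-2}-1]$. In the hypergraph language of the paper, a family $S\in\mathcal{E}_0$ is one whose hypergraph has the colex structure on levels $1,\ldots,k-1$ (equivalently, $\Delta(S)$ is the labelled colex segment) together with an \emph{arbitrary} choice of $n_k=t_{k-1}-t_{k-2}$ edges of size $k$. These $k$-edges may be taken from the entire pool of $k$-sets not containing any smaller hypergraph edge, and that pool is the colex family with hypergraph $(n_1,\ldots,n_{k-1},0)$, of size $f_{3,k}(n_1,\ldots,n_{k-1})=\Theta(n^k)$ (this is exactly the role of $f_{3,k}$ in \eqref{eq:count_rem_colex}). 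Hence
\[
|\mathcal{E}_0(t_0,\ldots,t_{k-1})|\sim\binom{f_{3,k}}{n_k}=\Theta\bigl(n^{\,k(t_{k-1}-t_{k-2})}\bigr),
\]
not $\Theta(n^{t_{k-1}-t_{k-2}-1})$. A concrete check at $k=2$: with $\Delta(S)=[a_0+1]$ fixed, one may delete \emph{any} $a_0-a_1$ of the $\binom{a_0+1}{2}$ edges of $K_{a_0+1}$ (subject only to not isolating a vertex, an $o(1)$ correction), not merely those incident to $a_0+1$.

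Because of this undercount, your estimate $|\mathcal{E}|-|\mathcal{E}_0|=O(|\mathcal{E}_0|/n)$ cannot be right as stated. The actual mechanism (and what the paper means by ``the choices for the edges of length $k$ are reduced'') is that any non-colex configuration on levels $\le k-1$ produces descendant balls at step $k$, so the number of genuine $k$-edges needed drops to some $n_k'<n_k$; the count for such a configuration is then at most $\binom{f_{3,k}}{n_k'}$, smaller than $\binom{f_{3,k}}{n_k}$ by a factor $\Theta(n^{k(n_k-n_k')})\ge\Theta(n^k)$. Since $t_0,\ldots,t_{k-2}$ are fixed, the number of non-colex configurations on levels $\le k-1$ is a polynomial in $n$ of fixed degree, which is swamped by this $\Theta(n^k)$ loss. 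That is the comparison you need, and it hinges on getting the order of $|\mathcal{E}_0|$ right.
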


Construction $A$ and $B$ can be seen as taking the role of the initial segment in the colex order when the depth is introduced.
\begin{theorem} \label{thm:count_2}
Consider the families as labelled. Fix $k>i\geq 0$ and let $t_0,\ldots,t_{k-1}$ be some positive integers.
	Let $\mathcal{E}(i,t_0,\ldots,t_{k-1})$ be the set of extremal families in $\binom{[n]}{k}$
	such that $\Delta^i(E)$ is the initial segment of the colex order, but $\Delta^{i-1}(E)$ is not and that have $k$-binomial decomposition:
	\[
	\binom{n-t_0-1}{k}+\cdots+\binom{n-t_{k-2}-(k-1)}{2}+\binom{n-t_{k-1}-(k-1)}{1}
	\]
Let $\mathcal{E}_0(i,t_0,\ldots,t_{k-1})$ be the set of extremal families in $\mathcal{E}(i,t_0,\ldots,t_{k-1})$ that are obtained from Construction $A$ or Construction $B$ (or one giving the same balls as one of them up to delay $k$) by removing some edges of size $k$. Then
\[
\lim_{n\to\infty} \frac{|\mathcal{E}(i,t_0,\ldots,t_{k-1})|}{|\mathcal{E}_0(i,t_0,\ldots,t_{k-1})|}=1
\]
\end{theorem}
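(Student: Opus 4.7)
The proof I propose will follow the template of Theorem~\ref{thm:count_1} and Proposition~\ref{prop:count_3}, combined with the structural dichotomy provided by Theorem~\ref{thm:decision}. First I would parameterize the families in $\mathcal{E}(i,t_0,\ldots,t_{k-1})$ by the hypergraph profile $(n_1,\ldots,n_k)$ of edge sizes together with the detailed choice of each edge. By Remark~\ref{rmk:k-binom_determines_shadow}, fixing the $k$-binomial decomposition fixes all shadow cardinalities $|\Delta^j(E)|$. The requirement that $\Delta^i(E)$ is a colex segment while $\Delta^{i-1}(E)$ is not forces $H^{(k-i)}$ to be a specific colex hypergraph (completely determined by $|\Delta^i(E)|$), so that $n_1,\ldots,n_{k-i}$ are all fixed and the first deviation from colex must occur at size $k-i+1$. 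By Claim~\ref{cl:edges_in_non_colex} this deviation falls into case \ref{exA} or \ref{exB}.

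Second, I would invoke Theorem~\ref{thm:decision}, together with Propositions~\ref{prop:conap_best}, \ref{prop:cona_best}, and \ref{prop:conb_best}, to compare any allowed configuration of edges of sizes $k-i+1,\ldots,k-1$ with the one produced by Construction $A$ or Construction $B$. The content of these propositions, transported to the hypotenusal process of Section~\ref{sec:bin_balls_wall}, is that for any alternative placement of the deviating and subsequent small edges the balls generated lie \emph{weakly to the left} of the balls generated by $A$ or $B$ at the corresponding step. Consequently, to reach the prescribed wall positions $w_1,\ldots,w_k$ dictated by the fixed $k$-binomial decomposition, the alternative configuration must create at least as many balls as $A$/$B$, hence at least as many edges of sizes $< k$; equivalently, the available slack for size-$k$ edges (bounded by $n-k$ in Remark~\ref{rmk:extremal_wall}) is at most that of $A$/$B$. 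Writing $n_k^{\max}$ for the maximum number of size-$k$ edges compatible with the $A$/$B$ small-edge structure, every non-$A$/$B$-like small-edge structure forces $n_k < n_k^{\max}$.

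Third, I would estimate
\begin{equation*}
|\mathcal{E}(i,t_0,\ldots,t_{k-1})| \;=\; |\mathcal{E}_0(i,t_0,\ldots,t_{k-1})| + \sum_{\text{non-}A/B \text{ small-edge structures}} (\text{number of size-}k\text{ completions}).
\end{equation*}
The dominant term $|\mathcal{E}_0(\cdot)|$ is of the form $\binom{M}{n_k^{\max}}$, where $M$ is the number of $k$-subsets of $[n]$ compatible with the $A$/$B$ small-edge structure (a binomial polynomial in $n$ of degree $k$). Each alternative small-edge structure contributes $\binom{M'}{n_k'}$ with $n_k' \le n_k^{\max}-1$ and $M' \le M + O(n^{k-1})$, and the number of such alternative structures at a given $(n_{k-i+1}, \ldots, n_{k-1})$ is polynomial in $n$ (bounded analogously to the inner sums in the proof of Theorem~\ref{thm:count_1}). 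The ratio $\binom{M'}{n_k'} / \binom{M}{n_k^{\max}}$ is then of order $n^{-(n_k^{\max}-n_k')}$ times a polynomial factor, exactly as in the chain of inequalities culminating in $\sum_{t=0}^{j-1} (j-t)^{k-2}/n^{j-t} \to 0$ at the end of the proof of Theorem~\ref{thm:count_1}. Summing over the polynomial-in-$n$ many alternative structures yields a vanishing fraction, giving $|\mathcal{E}|/|\mathcal{E}_0| \to 1$.

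The main obstacle is tracking precisely how each alternative small-edge structure decreases $n_k^{\max}$ relative to $A$/$B$; in Theorem~\ref{thm:count_1} the sum was unrestricted and any decrease sufficed, whereas here the fixed $k$-binomial decomposition couples the counts $(n_{k-i+1},\ldots,n_k)$ tightly. The hypotenusal bookkeeping of Section~\ref{sec:bin_balls_wall}, already used to prove that $A$ and $B$ minimize the left-shift of balls, provides a uniform strictly-positive lower bound on this decrease as soon as the configuration differs from $A$/$B$ at some step, which is precisely what is needed to make the combinatorial estimates absorb into the dominant term.
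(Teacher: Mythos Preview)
Your proposal is correct and follows essentially the same approach as the paper. The paper's own proof of this theorem is extremely terse---a single paragraph asserting that one should ``follow the same lines as in the proof of Theorem~\ref{thm:count_1}'' while ``observing that the choices for the edges of length $k$ in the hypergraph are reduced in all the other instances'' of Construction~$A$ or~$B$, together with the fact that $t_0,\ldots,t_{k-1}$ are fixed while $n$ grows---and you have faithfully reconstructed and expanded precisely that outline, invoking the same ingredients (Theorem~\ref{thm:decision}, Propositions~\ref{prop:conap_best}--\ref{prop:conb_best}, and the binomial estimates from the proof of Theorem~\ref{thm:count_1}).
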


\begin{proof}[Proof of Theorem \ref{thm:count_2} and \ref{thm:count_3}.]
	The proofs follow the same lines as in the proof of Theorem~\ref{thm:count_1} but observing that the choices for the edges of length $k$ in the hypergraph are reduced in all the other instances of extremal families the initial segment in the colex order for Theorem~\ref{thm:count_3} and for other instances of Construction $A$ or Construction $B$ (or one giving the same set of ball up to delay $k$). We also use the fact that $t_1,\ldots,t_{k-1}$ are fixed while $n$ grows in both instances.
\end{proof}

\section{Maximal chains} \label{sec:maximal_chains}

Given a family $S$, extremal or not, a natural question is the following. How many elements $|E|$ should we add to $S$ until:
\begin{enumerate}[label*=(Q\arabic*)]
	\item\label{en:q1} $S\cup E$ is an extremal family, or
	\item\label{en:q2} $S\cup E$ is such that there exists a sequence $\{e_1,\ldots,e_m\}$ with 
	\begin{enumerate}[label*=(\arabic*)]
		\item \label{en:cond1} all $\{S\cup E\cup\{e_1,\ldots,e_i\}\}_{i\in [m]}$ are extremal, and
		\item \label{en:cond2} $S\cup E\cup\{e_1,\ldots,e_m\}=\binom{n}{k}$
	\end{enumerate} 
\end{enumerate}
We do not give a comprehensive and precise answer to the \ref{en:q1}.
However, one can give an upper bound on the minimal size of $E$.
If $H$ is the family of sets of $S$, we can find all the $H'\subset H$ such that $H'$ is the hypergraph of an extremal family $S'$, then $S\subset S'$ and we have
\[
|E|\leq \min_{|S'|}\{ S' \text{ is extremal with hypergraph }H', H'\subset H\} -|S|
\]
that is, we consider all the possible hypergraphs that are subhypergraphs of $H$ and that give an extremal family, and then consider the smallest of such families (which is a superfamily of $S$).

For \ref{en:q2} we are asking when do we land in a maximal chain of extremal sets (maximal in the poset of the families ordered by inclusion). We can give a more precise answer, thanks to Theorem~\ref{t.inest} and Theorem~\ref{thm:extending_families}.
In particular, $S\cup E$ satisfies \ref{en:cond1} and \ref{en:cond2} if and only if $H(S\cup E)$ is extremal, and $H^{(k-1)}(S\cup E)$ is the hypergraph of the segment in the colex order. Therefore, the family $S'$ given by the hypergraph $H'=H^{(k-1)}(S\cup E)$ is the minimal colex with no term $\binom{b_{k-1}}{1}$ in the binomial decomposition, and such that contains $S$.
Therefore, $H'$ is given by a set of edges $\{f_1,\ldots,f_m\}$, each $f_i$ satisfying $e_j\subseteq f_i$ for some $e_j\in E(H)$, and $|f_i|\leq k-1$, and such that it gives an initial segment in the colex order, and the colex has minimum size with the edge containtment property. Observe that this hypergraph is not necessarily the maximal hypergraph $\overline{H}$ given in Section~\ref{sec:count};
for instance, the hypergraph with edges 
\[
(1,2),(1,3),(1,4,5),(1,4,6),(1,4,7),(1,4,8,9,10),(11,12)
\]
has the hypergraph with edges
\[
(1,2),(1,3),(1,4,5),(1,4,6),(1,4,7),(1,4,8,9,10),
\]
as its ``minimal colex'' formed by the edges containing it, yet the minimal colex containing it comes from the edges
\[
(1,2),(1,3),(1,4,5),(1,4,6),(1,4,7),(1,4,11,12),
\]
which is not a subhypergraph.

We do not know the complexity of finding such $H'$, but an exhaustive search is not necessary. Indeed, the following procedure that considers the edges by increasing size is possible. For each possible size of edge, increasingly, add the maximum number of edges $f_i$ to the previously found maximum configurations (with the containtment restriction $e_j\subseteq f_i$, and forming an initial segment in the colex order); for the next step, one should consider all the possible configurations  that have added the maximum number of edges to the configurations that have been considered previously.\footnote{In this section we are considering the families up to isomorphisms; it is therefore possible to have several labelled hypergraphs giving initial segments in the colex order, all of which being isomorphic.}


\bibliographystyle{abbrv}
\bibliography{bib_john.bib}

\end{document}